\newtheorem{theorem}{Theorem}[section]
\newtheorem{proposition}[theorem]{Proposition}
\newtheorem{corollary}[theorem]{Corollary}
\newtheorem{definition}[theorem]{Definition}
\newtheorem{notation}[theorem]{Notation}
\newtheorem{example}[theorem]{Example}
\newtheorem{problem}[theorem]{Problem}
\theoremstyle{remark}
\newtheorem{remark}[theorem]{Remark}
\numberwithin{equation}{section}
\DeclareMathOperator{\wt}{wt}
\DeclareMathOperator{\Lab}{\mathfrak{L}}
\DeclareMathOperator{\Tree}{Tree}
\DeclareMathOperator{\Fill}{Fill}
\DeclareMathOperator{\FTree}{FTree}
\DeclareMathOperator{\TreeLab}{TreeLab}
\DeclareMathOperator{\Aut}{Aut}
\DeclareMathOperator{\LabFill}{\mathfrak{F}}
\DeclareMathOperator{\Par}{Par}
\DeclareMathOperator{\Con}{Con}
\newcommand{\rmd}{\mathrm{d}}
\newcommand{\Zpos}{\mathbb{Z}_+}
\tikzset{
    blk/.style = {draw, circle, fill=black, text=white},
    wht/.style = {draw, circle, fill=white, text=black},
    mid_auto/.style = {midway, auto},
    my642/.code args= {#1,#2} {
        \begin{scope} [shift = {(#1)}, rotate = #2]
            \node[blk] (#1-42) at (1,0) {};
            \node[wht] (#1-64) at (2,0) {};
            \node[blk] (#1-6)  at (3,0) {};

            \draw (#1-6) -- (#1-64) node[mid_auto] {6};
            \draw (#1-64) -- (#1-42) node[mid_auto] {4};
            \draw (#1-42) -- (#1) node[mid_auto] {2};
        \end{scope}
    }
}
\begin{document}
\setcounter{page}{1}

%\color{black}{
%\noindent 
%{\small (Name of journal)}\hfill     {\small ISSN: ****-****}\\
%{\small Vol * (20**) ***}\hfill  {\small (url))}%}

\centerline{}

\title[Counting components of moduli space of 
HCMU spheres]
{Counting components of moduli space of HCMU spheres via weighted trees}

\author[Y. Song]{Yi Song} %$^{*}$

\address{School of Mathematical Sciences, University of Science and Technology of China, Hefei, Anhui, People's Republic of China.}
\email{\textcolor[rgb]{0.00,0.00,0.84}{sif4delta0@mail.ustc.edu.cn}}

%\address{----}
%\email{----}

%\dedicatory{This paper is dedicated to Professor ABCD}

\subjclass[2020]{Primary 32G15 58E11; Secondary 57M15 30F30}
%57M15 Relations of low-dimensional topology with graph theory [See also 05Cxx]

\keywords{extremal K\"{a}hler metric, HCMU surface, moduli space, plane tree, weight tree}

\date{ \today
%Received: xxxxxx; Revised: yyyyyy; Accepted: zzzzzz.
%\newline \indent $^{*}$ Corresponding author
}

\begin{abstract}
    HCMU surfaces are compact Riemann surfaces equipped with the Calabi extremal K\"{a}hler metric and a finite number of singularities.
    By using both the classical football decomposition introduced by Chen-Chen-Wu \cite{CCW05} and the description of the geometric structure of HCMU surfaces by Lu-Xu \cite{lu2025modulispacehcmusurfaces},
    we can use weighted plane trees to characterize HCMU spheres with a single integral conical angle. Moreover, 
    we obtain an explicit counting formula for the components of the moduli space of such HCMU spheres
    by enumerating some class of weighted plane trees.
    \end{abstract} 

\maketitle

\tableofcontents
\section{Introduction} \label{sec:introduction}

To find a ``best'' conformal metric on a Riemann surface, a class of K\"{a}hler metrics, called HCMU metrics, was initiated by E. Calabi in \cite{Calabi82extrm, Calabi85extrm} and X. X. Chen in \cite{Cxx98, Cxx99, chen2000obstruction}, and developed in \cite{LinZhu02,WangZhu00} etc.
In this manuscript, we will study and characterize the geometric properties of a specific class of HCMU spheres, by using weighted bi-colored plane trees.

\subsection{HCMU surfaces}

In a fixed K\"{a}hler class of a compact K\"{a}hler manifold $\mathcal{M}$, an extremal K\"{a}hler metric is introduced by E. Calabi in \cite{Calabi82extrm, Calabi85extrm}. The extremal K\"{a}hler metric is the critical point of the Calabi energy 
\[
\mathcal{C}(g)=\int_{\mathcal{M}} R^2 \rmd \rho,
\]
where $R$ is the scalar curvature of the metric $\rho$ in the K\"{a}hler class. The Euler-Lagrange equations of $E(\rho)$ is 
\begin{equation*}\label{eq:EL_eq_ext}
    R_{,\alpha\beta} = 0,\quad 1 \leq \alpha, \beta \leq \dim_{\mathbb{C}} \mathcal{M} \ .
\end{equation*}
The objective is to determine the ``best'' metric within a fixed K\"{a}hler class. When $\mathcal{M}$ is a compact Riemann surface, Calabi proved  in \cite{Calabi82extrm} that an extremal K\"{a}hler metric must have constant scalar curvature (CSC). This coincides with the classical uniformization theorem.

We must turn to surfaces with singularities to find non-CSC extremal K\"{a}hler metrics.
In \cite{Cxx99}, X.X. Chen gave a first example about non-CSC extremal K\"{a}hler metric with singularities. He also classified all extremal K\"{a}hler metrics on compact Riemann surfaces with finite cusp singularities, finite area and finite energy.

In \cite{WangZhu00}, G.F. Wang and X.H. Zhu discovered that on a surface with an extremal K\"{a}hler metric of finite energy and area, every singularity is either a weak cusp, or a conical singularity.
And now we will call an extremal K\"{a}hler metric with finite singularities on a compact Riemann surface as an HCMU (Hessian of the Curvature of the Metric is Umbilical) metric.

In \cite{chen2000obstruction}, X.X. Chen presented the obstruction theorem for non-CSC HCMU metrics with conical singularities.
The theorem tells us that a non-CSC HCMU metric without saddle points must be the HCMU football, whose curvature has two extremal points. 
Also in \cite{chen2000obstruction}, Chen gave the detailed properties of HCMU football metrics.

In \cite{lin2002explicit}, C.S. Lin and X.H. Zhu introduced a class of non-CSC HCMU metrics on $S^{2}$ with finite conical singularities of integer angles. They provided an explicit formula for such metrics with 3 parameters.
In \cite{chen2009existence}, Q. Chen and Y.Y. Wu generalized their results and obtained an explicit formula for non-CSC HCMU metrics on $S^{2}$ and $T^{2}$.
In \cite{CCW05}, Q. Chen, X.X. Chen and Y.Y. Wu proved that non-CSC HCMU surface can be divided into HCMU footballs, giving the surface a combinatorial structure. By the proof of existence of non-CSC HCMU spheres, they showed that the conditions in Chen's obstruction theorem about HCMU spheres are sufficient.

\subsection{HCMU moduli space}

Usually a moduli space is a space collecting all geometric objects of the same type. For HCMU surfaces, different genus and cone angles will vary the structure of HCMU surface. Thus we usually fix its genus and cone angles. 
We denote these cone angles as $2\pi\alpha_1, \cdots, 2\pi\alpha_n$, and the vector defined by $ \vec{\alpha} := (\alpha_1, \cdots, \alpha_n) \in \left( \mathbb{R}_{\geq 0} \setminus \{1\} \right) ^n$ is called \textbf{angle vector}.

\begin{definition}\label{defn:M_hcmu}
    The (geometric) \textbf{moduli space} $\mathcal{M}hcmu_{g,n}(\vec{\alpha})$ is the set of isometry classes of HCMU surfaces of genus $g$ with $n$ conical or cusp singularities, whose angle vector is prescribed to be $\vec{\alpha}$. Here a cusp singularity is regarded as a cone point of zero angle. 
\end{definition}

Throughout this paper,
we will concentrate on HCMU spheres with a single integral conical singularity, i,e, the angle vector is $\vec{\alpha} = (\alpha) \in \mathbb{Z}_{>1}$.
We try to describe the shape of such moduli space in the following sense. 

\begin{problem} \label{prob:geometric}
    What is the number of connected components of moduli space $\mathcal{M}hcmu_{0,1}(\alpha)$ of HCMU spheres with a single integral conical singularity?
\end{problem}

Here is some explanation on this problem. 
In \cite{MyjWzq24}, Y. Meng and Z.Q. Wei showed that such HCMU surfaces can be classified by the numbers of maximum and minimum points for the curvature. They also gave the sufficient and necessary conditions on these numbers.

\begin{theorem}[\cite{MyjWzq24}] \label{thm:exist_HCMU_sphere}
    For an HCMU sphere with a single conical singularity of angle $2\pi\alpha, \alpha > 0$, let $K$ be its curvature function.

    If the singularity is the extremal points of $K$, then the surface is an HCMU football. The singularity is the maximum point of $K$ if $\alpha > 1$, while being the minimum point if $0 \leq \alpha < 1$.

    If the singularity is the saddle point of $K$, then $2 < \alpha \in \mathbb{Z}$ and there will be $p$ maximum points of $K$ and $q$ minimum points, satisfing $\alpha = p+q-1$ and $p>q\geq1$. And one of the following conditions holds.
    \begin{enumerate} 
        \item $q=1$
        \item $q>1$ and $q \nmid p$.
    \end{enumerate}
    Such HCMU spheres exist if and only if the conditions above are satisfied.
\end{theorem}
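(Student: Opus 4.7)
The plan is to split the statement into two cases according to whether the singularity of angle $2\pi\alpha$ coincides with an extremum of the curvature $K$ or with a saddle point, and in each case to address both necessity and sufficiency (existence) of the listed conditions.

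The extremal case is essentially immediate from Chen's obstruction theorem in \cite{chen2000obstruction}: since the unique cone point is by hypothesis an extremum, the surface carries no saddle-type cone singularities, so Chen's result forces the metric to be an HCMU football. Inspecting the explicit description of HCMU footballs then distinguishes the two subcases of the statement: $\alpha > 1$ corresponds to the wide cone being the maximum of $K$, while $0 \leq \alpha < 1$ corresponds to the narrow cone being the minimum. This handles the first half of the statement completely.

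For the saddle case, I would invoke the Chen-Chen-Wu football decomposition from \cite{CCW05}: cutting the sphere along the $\nabla K$-trajectories emanating from the saddle partitions it into a finite collection of HCMU footballs, each carrying one local maximum and one local minimum of $K$ at its poles. The resulting embedded graph has as vertices the $p+q$ extrema together with the single saddle, and as edges the $2(\alpha-1)$ separatrices emerging from the saddle (alternating between ascending ones ending at maxima and descending ones ending at minima). Standard HCMU analysis at a cone saddle gives $\alpha \in \mathbb{Z}$, and a direct Euler-characteristic count on this planar graph yields the relation $\alpha = p + q - 1$. The remaining constraints $\alpha > 2$ and $p > q \geq 1$ should then follow from a closer inspection of the decomposition, together with the built-in asymmetry between maxima and minima of the HCMU metric.

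The main obstacle, and the crux of the proof, is the arithmetic condition that either $q = 1$, or $q > 1$ and $q \nmid p$. The approach I would take is to examine the cyclic arrangement of ascending and descending separatrices around the saddle: this assigns to each of the $p$ max points (resp.\ $q$ min points) a positive multiplicity summing to $\alpha - 1$, and a consistent gluing of the footballs into a topological sphere forces a compatibility relation on these multiplicities. If $q \mid p$ with $q > 1$, a cyclic symmetry argument should show that the configuration would descend to a quotient surface with strictly fewer extrema than prescribed, contradicting the given counts. For sufficiency, I would construct explicit HCMU spheres for each admissible pair $(p,q)$ by gluing $p + q$ HCMU footballs with appropriately chosen cone-angle data at the central saddle; the condition $q \nmid p$ (or $q = 1$) is precisely what guarantees that such a gluing can be realized without inconsistency, establishing the existence direction.
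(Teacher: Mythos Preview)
The paper does not contain a proof of this theorem. Theorem~\ref{thm:exist_HCMU_sphere} is explicitly attributed to Meng and Wei \cite{MyjWzq24} and is quoted only as background motivating the moduli-space counting problem; no argument for it appears anywhere in the text. There is therefore nothing in the paper against which to compare your proposal.

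On the proposal itself: the treatment of the extremal case via \cite{chen2000obstruction} and of the identity $\alpha=p+q-1$ via the football decomposition and an Euler-characteristic count is standard and should go through. The genuine weak spot is your handling of the divisibility obstruction. You write that when $q\mid p$ with $q>1$ ``a cyclic symmetry argument should show that the configuration would descend to a quotient surface with strictly fewer extrema,'' but an individual HCMU sphere with these parameters is not forced to carry any nontrivial symmetry, so there is nothing to quotient by. The obstruction is arithmetic, not symmetry-based: it comes from the balance conditions at the smooth extrema (compare the weight conditions \eqref{eq:condition_weight} and the balance equations in Theorem~\ref{thm:hcmu_data}), which after normalization force the edge-weights in the underlying bipartite plane tree to be positive integers summing to $q$ at every black vertex and to $p$ at every white vertex. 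One then checks directly that no such weighted tree exists when $q>1$ and $q\mid p$; for instance, with $(p,q)=(4,2)$ the unique possible tree shape already fails a parity count. Likewise, in the existence direction the issue is not ``consistency of gluing'' in a topological sense but solvability of this integer weight system on some plane tree, and that is what one must actually exhibit.
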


Later in \cite{lu2025modulispacehcmusurfaces}, S.C Lu and B. Xu used a detailed version of football decomposition (see Theorem \ref{thm:hcmu_data}) to show that the numbers of maximum and minimum points are still not enough to completely classify such HCMU spheres. 
This naturally leads us to search for a more refined categorization of such HCMU spheres, and a deeper question of counting components of $\mathcal{M}hcmu_{0,1}(\alpha)$. 

In next section, we will review the combinatorial structure in \cite{lu2025modulispacehcmusurfaces}, transforming the geometric problem into combinatorial one.

\subsection{From geometric surfaces to weighted bi-colored trees} \label{subsec:from_geometric_to_WBP-tree}

Our problem will be transformed into the enumeration of a kind of trees, called weighted bi-colored plane trees.

\begin{definition} \label{def:WBP-tree}
    A \textbf{weighted bi-colored plane tree} (\textbf{WBP-tree} in short) is a triple $T = (V, E, \mathcal{W})$ satisfies following conditions.
    \begin{itemize}
        \item $(V,E)$ is an embedded tree on the Euclidean plane $\mathbb{R}^2$.
        \item The vertex set admits a partition $V = V^+\sqcup V^-$ such that each edge connect one vertex in $V^+$ and one in $V^-$.
        We regard vertices in $V^+$ as \textbf{black points}, and $V^-$ as \textbf{white points}. 
        \item $\mathcal{W} : E \to \mathbb{Z}_+$ is called the \textbf{weight function} of edges. 
    \end{itemize}
\end{definition}

\begin{definition} \label{def:WBP-tree_weight_of_vertex_and_passport}
    For a WBP-tree $T_i = (V = V^+\sqcup V^-, E, \mathcal{W})$, \textbf{the weight of a vertex} is the sum of weights of edges adjacent to it, denoted by $\wt : V \to \mathbb{Z}_+$.
\end{definition}

\begin{problem} \label{prob:combination}
    Assume $p>q \geq 1$ are positive integers satisfing $\alpha = p+q-1$. 
    Then count the number of WBP-trees satisfying the following conditions: 
    \begin{equation} \label{eq:condition_number_of_vertices}
        \#V^+ = p,\quad \#V^- = q .
    \end{equation}
    \begin{equation} \label{eq:condition_weight}
        \wt(x) = q ,\forall x \in V^+,\quad \wt(y) = p ,\forall y \in V^- ,
    \end{equation}

\end{problem}

The transformation from Problem \ref{prob:geometric} to Problem \ref{prob:combination} is achieved by the combinatorial data representation of HCMU surfaces introduced in \cite{lu2025modulispacehcmusurfaces}. 

\begin{theorem}[\cite{lu2025modulispacehcmusurfaces}]\label{thm:hcmu_data}
    There is a 1-1 correspondence between all generic genus $g\geq0$ HCMU surfaces with conical or cusp singularities and the
    following data sets on a genus $g$ smooth surface $S$: 
    \[ \left( E, V^+ \sqcup V^-; K_0, R; \mathcal{W, L} \right). \]
    where 
    \begin{enumerate}
        \item $E$ is a finite collection of arcs on $S$, dividing $S$ into topological polygons. We will regard the arcs in $E$ as edges of graph drawn on $S$. Each edges in $E$ represents a football in the decomposition. 
        \item $V^+ \sqcup V^-$ is partition of the vertices of $E$ such that every edge in $E$ connects points in different part. The points in $V^+(V^-)$ are called \textbf{black(white) points}, representing the maximum(minimum) points of the curvature of HCMU surfaces. And there will be a bi-colored graph $\mathcal{G} := (V^+ \sqcup V^-, E)$ drawn on the underlying surface.
        \item The complementary polygons of $E$, called the face of graph $\mathcal{G}$, represent saddle points. The set of faces in denoted by $\mathbb{F}$.
        \item The constant $K_0$ records the curvature at maximum points, and $R$ is a parameter essentially equivalent to the curvature at minimum points. Note that the curvatures at maximum(minimum) points are the same respectively.
        \item $\mathcal{W} : E \to \mathbb{R}_+$ records the cone angles at the maximum point in each football represented by edges of $E$. It satisfies \textbf{balance equations}. For maximum point $x \in V^+$ of cone angle $2\pi\beta$, we have 
        \begin{equation}\label{eq:balance_black}
            \sum_{e\in E(x) } \mathcal{W}(e) = \beta ,
        \end{equation}
        where $E(x)$ denotes the set of edges adjacent to vertex $x$.

        For minimum point $y \in V^-$ of cone angle $2\pi\beta$, we have 
        \begin{equation}\label{eq:balance_white}
            R\cdot \sum_{e\in E(y) } \mathcal{W}(e) = \beta .
        \end{equation} 
        \item $\mathcal{L} : \mathbb{F} \to (0, l)$ records the position of saddles points when viewed from a football, where $l$ is related to $K_0$ and $R$.
    \end{enumerate}
\end{theorem}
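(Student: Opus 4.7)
The plan is to establish the correspondence by constructing both directions of the bijection explicitly, building on the football decomposition of Chen--Chen--Wu \cite{CCW05}. The forward direction takes an HCMU surface to its combinatorial data, while the reverse direction reconstructs the surface from the data; generic surfaces are singled out precisely so that this reconstruction is unambiguous.

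For the forward direction, I would begin by analyzing the scalar curvature function $K$ on a generic HCMU surface. The HCMU condition forces $\nabla K$ to be particularly rigid, and its critical points split into three types: maxima where $K$ attains a fixed value $K_0$ (giving the black vertices $V^+$), minima where $K$ attains a fixed value (encoded by the parameter $R$, giving the white vertices $V^-$), and saddle points of $K$. By the CCW05 football decomposition, the stable/unstable manifolds of saddles partition the surface into HCMU footballs, each having exactly one maximum and one minimum on its boundary. The edges of $E$ are taken to be the boundary arcs of these footballs; the vertex bipartition $V^+\sqcup V^-$ is inherited from max/min; the faces of the resulting bi-colored graph $\mathcal{G}$ correspond one-to-one with saddle points. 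The weight $\mathcal{W}(e)$ is defined as the angular contribution of the football associated to $e$ at its black endpoint. Summing these weights at each vertex and using the standard relation between the extremal curvatures and the cone angle of a football yields the balance equations \eqref{eq:balance_black} and \eqref{eq:balance_white}, where the factor $R$ appears at white vertices because the cone angle at a minimum is scaled by the ratio of extremal curvatures. Finally, $\mathcal{L}(f)$ records the position of the saddle at face $f$ along the gradient arc of its containing football.

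For the reverse direction, I would assemble each football as a concrete model HCMU piece determined by $(K_0, R, \mathcal{W}(e))$: a football with these extremal curvatures and prescribed angular width admits a unique isometric realization. These footballs are then glued pairwise along the edges of $\mathcal{G}$ following the cyclic order around each vertex given by the planar embedding, and the saddle points are inserted at the positions prescribed by $\mathcal{L}$. The balance equations guarantee that the total angle at each vertex matches the prescribed cone angle exactly, and the bi-coloring guarantees that each gluing identifies a max-side boundary with a min-side boundary. The resulting surface inherits a genus $g$ HCMU structure whose data is, by construction, the original tuple.

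The main obstacle will be showing that the reverse map is well-defined and the correspondence is injective modulo isometry. On one hand, one must check that the position data $\mathcal{L}$ on the two sides of any edge $e$ are automatically compatible, rather than imposing a hidden closing-up constraint; this should follow from the rigidity of the HCMU metric, in particular the fact that gradient lines of $K$ carry a canonical length parameter $l = l(K_0, R)$ along which any two saddles touching $e$ must register at the same height. On the other hand, genericity is exactly what prevents two distinct saddles from coinciding on a common gradient trajectory or from merging into higher-order degeneracies, which would cause the face-labeling and $\mathcal{L}$ to collapse; ruling out such coincidences is what makes the datum determine the isometry class uniquely.
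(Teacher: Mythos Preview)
This theorem is not proved in the present paper: it is quoted from \cite{lu2025modulispacehcmusurfaces} and used as a black box to translate the geometric problem into the combinatorial Problem~\ref{prob:combination}. There is therefore no proof here to compare your proposal against.

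That said, your sketch is a reasonable outline of how such a correspondence is typically established, and it correctly identifies the football decomposition of \cite{CCW05} as the key geometric input. One point worth flagging: in your forward direction you describe the edges of $E$ as the boundary arcs of the footballs, but the theorem's own description says each edge \emph{represents} a football, i.e.\ the edges are dual to the footballs rather than being their boundaries. This is a small bookkeeping issue, but getting the edge/face duality right is exactly what makes the faces of $\mathcal{G}$ correspond to saddle points and the balance equations take the stated form. If you intend to supply a full proof, you should consult \cite{lu2025modulispacehcmusurfaces} directly for the precise conventions and for the handling of the genericity hypothesis.
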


By Theorem \ref{thm:hcmu_data}, we will finally characterize the class of HCMU spheres using WBP-tree, which constructs a bridge from geometric surfaces to weighted bi-colored trees. 

\begin{proposition}
    There is a bijection between the set of components of HCMU moduli space $\mathcal{M}hcmu_{0,1}(\alpha)$ with $p$ maximum points and $q$ minimum points satisfing $\alpha = p+q-1$ and $p>q\geq1$ , and the set of WBP-trees satisfies conditions in Problem \ref{prob:combination}.
\end{proposition}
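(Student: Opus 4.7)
The plan is to invoke Theorem~\ref{thm:hcmu_data} and translate its data into the desired WBP-tree data. Consider an HCMU sphere in $\mathcal{M}hcmu_{0,1}(\alpha)$ with $p$ maxima and $q$ minima. The $p+q$ extremal points are all smooth points of the surface and thus carry cone angle $2\pi$, so the unique conical singularity of angle $2\pi\alpha$ has to be the only saddle point. Since by Theorem~\ref{thm:hcmu_data} the set of faces of the graph $\mathcal{G}=(V^+\sqcup V^-,E)$ is in bijection with the set of saddle points, $\mathcal{G}$ has exactly one face. Euler's formula $|V|-|E|+|F|=2$ on $S^2$ then forces $|E|=p+q-1=\alpha$, and connectedness of $\mathcal{G}$ turns it into a tree. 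Being embedded in $S^2$ with the bi-coloring $V^+\sqcup V^-$, this is a bi-colored plane tree, so half of the WBP-structure is already in place.

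Second, I would read off the weight function. The balance equations \eqref{eq:balance_black}--\eqref{eq:balance_white} with $\beta=1$ at every extremal point give $\sum_{e\in E(x)}\mathcal{W}(e)=1$ for $x\in V^+$ and $\sum_{e\in E(y)}\mathcal{W}(e)=1/R$ for $y\in V^-$. Summing either of these over its color class equates $\sum_{e\in E}\mathcal{W}(e)$ with both $p$ and $q/R$, forcing $R=q/p$. Setting $\widetilde{\mathcal{W}}:=q\,\mathcal{W}$, the two balance conditions become precisely $\wt(x)=q$ and $\wt(y)=p$ as in \eqref{eq:condition_weight}. To see that $\widetilde{\mathcal{W}}(e)\in\mathbb{Z}_+$, root the tree at any vertex and compute weights bottom-up: each leaf edge gets the integer weight of its leaf vertex, and each subsequent edge weight is an integer vertex weight minus a sum of already-computed integer child-edge weights. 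Positivity of $\widetilde{\mathcal{W}}$ follows from positivity of $\mathcal{W}$ in Theorem~\ref{thm:hcmu_data}. This produces a WBP-tree satisfying \eqref{eq:condition_number_of_vertices}--\eqref{eq:condition_weight}.

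Third, I would identify the component in which the given HCMU sphere lives with the WBP-tree just produced. The remaining parameters in the Lu--Xu data are $K_0$ and $\mathcal{L}$: since $\mathcal{G}$ has only one face, $\mathcal{L}$ is a single real number in an open interval $(0,l)$ with $l$ depending on $K_0$. Hence within a fixed WBP-tree the HCMU surfaces are parametrized by a connected open subset of $\mathbb{R}_+^2$, so they all lie in a single connected component. For the reverse direction, given any WBP-tree satisfying \eqref{eq:condition_number_of_vertices}--\eqref{eq:condition_weight} I would invoke Theorem~\ref{thm:hcmu_data} with $R=q/p$ and $\mathcal{W}=\widetilde{\mathcal{W}}/q$ to realize at least one HCMU sphere. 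Uniqueness of $\widetilde{\mathcal{W}}$ for a fixed bi-colored plane tree (solvability of the balance system, shown above) then makes the map between components and WBP-trees a genuine bijection.

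The main obstacle is the surjectivity step: verifying that every WBP-tree satisfying the combinatorial conditions is actually realized as the discrete data of some HCMU sphere. Concretely, one needs to check that the positivity and compatibility constraints hidden in Theorem~\ref{thm:hcmu_data} (positivity of the rescaled weights, existence of admissible $(K_0,\mathcal{L})$, and global gluing consistency) are automatically implied by \eqref{eq:condition_number_of_vertices}--\eqref{eq:condition_weight}; the existence range in Theorem~\ref{thm:exist_HCMU_sphere} provides a useful sanity check ($q=1$ or $q\nmid p$) at the level of $(p,q)$. The invariance of the discrete data along continuous paths in $\mathcal{M}hcmu_{0,1}(\alpha)$, which ensures that different WBP-trees give different components, is comparatively softer: the graph combinatorics cannot jump under small perturbations of the geometry.
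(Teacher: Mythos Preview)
Your proposal is correct and follows essentially the same route as the paper: invoke Theorem~\ref{thm:hcmu_data}, deduce from the single saddle that $\mathcal{G}$ has one face and is therefore a plane tree, read off the vertex-weight conditions from the balance equations, and observe that only $K_0$ and $\mathcal{L}$ vary continuously while $R$ and the tree are rigid. The paper itself gives only a short informal sketch after the proposition and does not prove it formally; your write-up is actually more complete. In particular, you make explicit two points the paper glosses over: the rescaling $\widetilde{\mathcal{W}}=q\,\mathcal{W}$ together with the inductive argument that the resulting edge weights are integers (the paper passes silently from $\mathcal{W}:E\to\mathbb{R}_+$ in Theorem~\ref{thm:hcmu_data} to $\mathcal{W}:E\to\mathbb{Z}_+$ in Definition~\ref{def:WBP-tree}), and the surjectivity direction, which the paper does not discuss beyond citing Theorem~\ref{thm:hcmu_data} as a 1--1 correspondence.
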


Let us now briefly examine the process. 

By Theorem \ref{thm:exist_HCMU_sphere}, case when the singularity is maximum point is trivial. We will only consider the case when the singularity is saddle point. The ``generic'' condition in Theorem \ref{thm:hcmu_data} is always satisfied in this case.

The singularity, or the saddle point, is represented by the only face in $\mathbb{F}$. The graph $\mathcal{G} := (V^+ \sqcup V^-, E)$ drawn on spheres, thus drawn plane, together with the weight function $\mathcal{W}$, constructs a weighted bi-colored plane graph $T := (V^+ \sqcup V^-, E, \mathcal{W})$. There are only one face of the graph $T$, thus the graph is a WBP-tree. Condition \eqref{eq:condition_weight} of the WBP-tree is a form of balance function.

In the moduli space of HCMU surfaces, the maximum curvature $K_0$ and the position function $\mathcal{L}$ can be continuously changed, while $R$ is decided by $p$ and $q$, and the WBP-tree $T := (V^+ \sqcup V^-, E, \mathcal{W})$ cannot change continuously. That is why the WBP-tree represents the component of $\mathcal{M}hcmu_{0,1}(\alpha)$.

\subsection{Main results}

By studying the combinatorial problem \ref{prob:combination}, we get an explicit formula to enumerate the components of HCMU moduli space in our basic setting.

\begin{theorem} \label{thm:main}
    Assume genus $g = 0$, angle vector $\vec{\alpha}=(\alpha) \in \mathbb{Z}_{>1}$, positive integers $p>q\geq1$ satisfing $\alpha = p+q-1$. Define
    \begin{equation} \label{eq:def_g_012}
        g_0 = \gcd(p, q) ,\quad g_1 = \gcd(p-1, q) ,\quad g_2 = \gcd(p,q-1) .
    \end{equation}

    Then the number of components of HCMU moduli space $\mathcal{M}hcmu_{0,1}(\alpha)$ with $p$ maximum points and $q$ minimum points, or the number of WBP-trees satisfing conditions in Problem \ref{prob:combination} is

    \begin{equation} \label{eq:Tree_Xi_with_pq}
        G(1) + \sum_{1<d|g_1} \varphi(d) G(d) + \sum_{1<d|g_2} \varphi(d) G(d) .
    \end{equation}

    $G(1)$ is defined by
    \begin{equation} \label{eq:def_S_1}
        S_1 = \{(n_1, \dots, n_{g_0}) \in \mathbb{Z}_{\geq 0}^{g_0} \,|\, \sum_{j=1}^{g_0}{jn_j}=g_0 \} ,
    \end{equation}
    \begin{equation} \label{eq:G(1)_sum_contribution}
        G(1) = \sum_{(n_1, \dots, n_{g_0}) \in S_1} \mathcal{C}_1(n_1, \dots, n_{g_0}) .
    \end{equation}

    And $G(d), d > 1$ is defined by
    \begin{equation} \label{eq:def_S_d}
        S_d = \{(s, n_1, \dots, n_{g_0}) \in \mathbb{Z}_{\geq 0}^{g_0 + 1} \,|\, s + \sum_{j=1}^{g_0}{jn_j} = \lfloor\frac{g_0}{d}\rfloor \} ,
    \end{equation}
    \begin{equation} \label{eq:G(d)_sum_contribution}
        G(d) = \sum_{(n_1, \dots, n_{g_0}) \in S_1} \mathcal{C}_d(s, n_1, \dots, n_{g_0}) .
    \end{equation}
    The detailed definitions of $\mathcal{C}_1(n_1, \dots, n_{g_0})$ and $\mathcal{C}_d(s, n_1, \dots, n_{g_0})$ are in \eqref{eq:def_C_1} and \eqref{eq:def_C_d} respectively, and $\varphi(n)$ is Euler's totient function.
\end{theorem}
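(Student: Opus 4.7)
The plan is to treat Problem \ref{prob:combination} as a pure combinatorial enumeration of WBP-trees with prescribed vertex weights, and to derive the closed form by a Burnside-style orbit-counting argument that exploits the fact that the automorphism group of any plane tree is cyclic. Throughout, the guiding principle is that each WBP-tree $T$ has $\Aut(T) \cong \mathbb{Z}/d_T$ for some $d_T \geq 1$, and the formula~\eqref{eq:Tree_Xi_with_pq} is a bookkeeping of contributions grouped by $d_T$ and by the colour of the rotation centre.

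First I would classify the possible rotational symmetries. Any non-trivial automorphism of a plane tree is a rotation about a vertex or about the midpoint of an edge; for a WBP-tree the automorphism must preserve the bi-colouring, so an edge-midpoint rotation would necessarily swap a black and a white vertex and is therefore excluded. Hence every non-trivial symmetry is a rotation about a single vertex. If the centre is black, then the remaining $p-1$ black vertices and all $q$ white vertices are permuted freely in orbits of size $d$, forcing $d \mid \gcd(p-1,q) = g_1$; an analogous argument for a white centre gives $d \mid \gcd(p,q-1) = g_2$. These divisibility conditions precisely explain the two divisor ranges appearing in~\eqref{eq:Tree_Xi_with_pq}.

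Next I would enumerate trees admitting an explicit rotation of order $d > 1$ about a specified centre via their fundamental sectors. A $d$-fold symmetric WBP-tree is determined by a rooted planar ``sector'' which, when spun $d$ times around the central vertex, reproduces the whole tree together with the balance equations~\eqref{eq:condition_weight}. The weight $q$ (resp. $p$) at the central black (resp.\ white) vertex distributes evenly over the $d$ copies of the sector, and the aggregate edge-weight in a sector is $g_0/d$; any edge that the rotation fixes contributes an integer $s$ to this total, while the remaining edges form multisets of weights indexed by $(n_1,\ldots,n_{g_0})$ with $\sum j n_j = \lfloor g_0/d\rfloor - s$. This is precisely the parameter set $S_d$ of~\eqref{eq:def_S_d}, and counting rooted sectors with each prescribed multiset yields the expression for $\mathcal{C}_d$ (and $\mathcal{C}_1$ for the asymmetric case), so that $G(d)$ is the total number of trees admitting at least one rotation of order $d$ at a centre of the chosen colour.

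The outer assembly is then a standard cycle-index / Burnside identity: in any cyclic group of order $m$, the number of elements of exact order $d$ is $\varphi(d)$ whenever $d \mid m$, and summing $1 = \sum_T 1$ over the groupoid of pairs (tree, rotation) and regrouping by the order and centre of the rotation yields the three terms of~\eqref{eq:Tree_Xi_with_pq}: $G(1)$ from the identity rotation, and $\varphi(d)G(d)$ for each non-trivial order $d$ dividing $g_1$ (black centre) or $g_2$ (white centre). The main obstacle I anticipate is the explicit enumeration encoded in $\mathcal{C}_1$ and $\mathcal{C}_d$: one must enumerate rooted plane trees whose edge-weight multiset around the centre is prescribed and whose balance equations hold simultaneously at every vertex, not merely at the centre. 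Handling the cyclic orderings of edges under the $\mathbb{Z}/d$-action, and in particular the special role of an edge of weight $g_0/d$ that may be fixed by the rotation (the parameter $s$, with the floor $\lfloor g_0/d\rfloor$ accounting for when no such fixed edge exists), will require careful case analysis and is where I expect the combinatorial heavy lifting — likely reducible to a weighted multinomial computation via the plane-tree/parenthesisation bijection — to occur.
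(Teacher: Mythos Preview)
Your high-level classification of automorphisms is correct and matches the paper, but your interpretation of the quantities $G(d)$, $S_d$, and $\mathcal{C}_d$ is mistaken, and this is a genuine gap. You treat $G(d)$ as ``the total number of trees admitting at least one rotation of order $d$'', but in the paper $G(d) = \#\FTree(\Xi/d)\big/\bigl(d\,p(\Xi/d)\bigr)$ is generally \emph{not} an integer: for $(p,q)=(7,3)$ one has $G(1)=4/3$ and $G(3)=1/3$, and for $(p,q)=(10,6)$ one has $G(1)=133/15$. Relatedly, your Burnside-style identity does not yield \eqref{eq:Tree_Xi_with_pq}: summing over pairs (tree, automorphism) gives $\sum_e e\cdot\#\Tree(\Xi,e)$, not $\#\Tree(\Xi)=\sum_e \#\Tree(\Xi,e)$. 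The paper instead proves the identity $\sum_{d\mid e}F(e)=G(d)$ with $F(d)=\#\Tree(\Xi,d)/d$, and then uses $\sum_{1<d\mid g_i}(d-1)F(d)=\sum_{1<d\mid g_i}\varphi(d)G(d)$; this is where the Euler-$\varphi$ weights actually come from.

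More seriously, your reading of the parameters $(s,n_1,\dots,n_{g_0})$ as describing edge-weight multisets inside a fundamental sector is not what drives the formulas. In the paper, $\mathcal{C}_1$ and $\mathcal{C}_d$ arise from Kochetkov's theorem (Theorem~\ref{thm:passport_simple}) for trees with a \emph{simple} passport: one passes from $\Xi/d$ to the filled passport $\Fill(\Xi/d)$, and $\#\FTree(\Xi/d)$ is then an alternating sum over all $n$-partitions of $\Fill(\Xi/d)$, each contributing $(-1)^{n-1}(\#\Xi-1)^{n-2}\prod_i(\#\Xi_i-1)!$. The tuples in $S_1$ and $S_d$ index the \emph{types} of such partitions (Propositions~\ref{prop:partition_Xi_satisfy} and~\ref{prop:partition_Xi/d_satisfy}); the floor $\lfloor g_0/d\rfloor$ and the extra parameter $s$ come from solving the linear Diophantine constraint $aq+q/d=bp$ for the block containing the starred vertex, not from any fixed edge under the rotation. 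Your sector-enumeration plan would have to be replaced by this partition analysis (or an equivalent) before the explicit forms of $\mathcal{C}_1$ and $\mathcal{C}_d$ can be reached.
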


In fact, we have actually solved a more general combinatorial problem. 
We obtain a formula to enumerate the WBP-trees with a given ``passport''.

\begin{definition}
    The \textbf{passport} of a WBP-tree is a pair of multiset $\Xi = (\Xi^+, \Xi^-)$, where $\Xi^{\pm}$ is collection of weights of $V^{\pm}$.
\end{definition}

We will denote multiset in the \textbf{power notation}. For example, the multiset $(1,1,1,1,3,3,7)$ is denoted by $1^4 3^2 7$. The number of black (white) points in the WBP-tree is $\#\Xi^+(\#\Xi^-)$. And the total number of points is $\#\Xi := \#\Xi^+ + \#\Xi^-$.

The generalized combinatorial problem is
\begin{problem} \label{prob:passport}
    What is the number of WBP-trees with a given passport $\Xi$?
\end{problem}

Then our combinatorial problem \ref{prob:combination} will be a special case of the above problem when passport $\Xi = (q^p, p^q)$ represents the conditions in Problem \ref{prob:combination}.

An algorithm for Problem \ref{prob:passport} has been given in \cite{kochetkov2015enumeration}. We generalize this results to the so-called ``labeled'' cases (see Definition \ref{def:labeled_passport} and \ref{def:labeling_of_tree_and_LWBP-tree} about labeled passports and labeled WBP-trees), and provide a formula in Section \ref{sec:divided_passport_and_formula} that makes computations easier. However, it is not an explicit formula.

\begin{theorem} \label{thm:number_of_Tree_Xi}
    Let $\Xi$ be a labeled passport. Then the number of labeled WBP-trees with labeled passport $\Xi$ is
    \begin{equation} \label{eq:number_of_Tree_Xi}
        G(1) + \sum_{i=1}^{u+v} {\sum_{1<d|g_i} \varphi(d) G(d)} ,
    \end{equation}
    where $g_i, G(d)$ will be given in Definition \ref{def:g_i_and_g_d}.
\end{theorem}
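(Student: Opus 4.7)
The plan is to classify labeled WBP-trees according to their cyclic symmetry type and then assemble the count by an orbit-counting argument, generalizing the enumeration scheme of Kochetkov \cite{kochetkov2015enumeration} from unweighted bi-colored plane trees to the labeled, weighted setting. The starting observation is that every plane tree has a unique center (a vertex or an edge), so the automorphism group of a WBP-tree as a plane tree is cyclic; any nontrivial automorphism is a rotation of some order $d > 1$ around this center, and the passport at the center must be compatible with such a rotation. The indices $i = 1, \ldots, u+v$ are intended to enumerate the possible center types associated with the labeled passport $\Xi$, and $g_i$ records the largest cyclic order compatible with the labeled passport data at center $i$.

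First I would reduce to a rooted/pointed enumeration: for each WBP-tree, distinguish a canonical corner or root at the center, breaking the cyclic symmetry so that the resulting objects can be counted directly. For a tree with trivial automorphism, the rooted count is assembled by partitioning edge weights at each vertex by multiplicity, which is what the multinomial $\mathcal{C}_1(n_1, \ldots, n_{g_0})$ is designed to enumerate; summing over the tuples $(n_1, \ldots, n_{g_0}) \in S_1$ covers all possible local weight distributions compatible with $\Xi$, giving $G(1)$. For each divisor $d > 1$ of some $g_i$, a cyclically symmetric tree is equivalent to a ``fundamental domain'' WBP-tree together with an extra piece of central data; the extra coordinate $s$ in $S_d$ records this fixed central contribution (e.g.\ central edges or corners), and $\mathcal{C}_d(s, n_1, \ldots, n_{g_0})$ enumerates the fundamental domains with the compatible weight distribution. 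Summation over $S_d$ produces $G(d)$.

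Second, the identity \eqref{eq:number_of_Tree_Xi} would follow from a Burnside/Möbius-type reorganization: each labeled WBP-tree with automorphism group of exact order $d$ contributes once to the total, and this single contribution is recovered by the $\varphi(d)$ factor applied to $G(d)$, which counts each symmetric tree once for every primitive generator of its cyclic automorphism group. Summing over center types $i = 1, \ldots, u+v$ and divisors $d \mid g_i$ then collapses the contributions to the stated expression. The main obstacle will be the precise bookkeeping of the multinomials $\mathcal{C}_d$ under the cyclic quotient, in particular tracking how the edge weights, vertex labels, and planar-embedding data are distributed between the fundamental domain and the fixed central data, as well as handling the vertex-center versus edge-center dichotomy through the single parameter $g_i$. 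Once this combinatorial correspondence between symmetry types and the $G(d)$ terms is established, the final formula should follow from a direct Burnside-style summation over the cyclic action at the center.
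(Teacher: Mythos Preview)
Your proposal conflates two different theorems. The quantities $\mathcal{C}_1$, $\mathcal{C}_d$, $S_1$, $S_d$, and $g_0$ belong to Theorem~\ref{thm:main}, which treats only the special passport $\Xi = (q^p, p^q)$; they are derived \emph{from} Theorem~\ref{thm:number_of_Tree_Xi} in Section~\ref{sec:explicit_formula}, not used to prove it. In the general statement you are asked to prove, $G(d)$ is defined abstractly as $\#\FTree(\Xi/d)/\bigl(d\,p(\Xi/d)\bigr)$, where $\FTree(\Xi/d)$ is the set of trees with the \emph{filled} (hence simple) passport $\Fill(\Xi/d)$ and $p(\cdot)$ is the product of factorials of multiplicities. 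Your interpretation of $G(d)$ as a direct count of fundamental domains via a multinomial $\mathcal{C}_d$ is not what $G(d)$ is in this theorem.

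The paper's proof does not root at the center. It breaks symmetry by \emph{filling}: replacing each repeated weight in $\Xi$ by distinct labeled copies so the passport becomes simple, and then invoking Kochetkov's formula (Theorem~\ref{thm:passport_simple}) to compute $\#\FTree(\Xi/d)$. The two structural facts you are missing are Theorem~\ref{thm:p_Tree_eq_e_FTree}, which gives $p(\Xi)\cdot\#\Tree(\Xi,e)=e\cdot\#\FTree(\Xi,e)$, and Theorem~\ref{thm:Tree_Xi/d/e_eq_Tree_Xi/de}, which gives $\#\Tree(\Xi,de)=\#\Tree(\Xi/d,e)$. Combining these yields the key relation
\[
G(d)=\sum_{d\mid e} F(e),\qquad F(e):=\frac{\#\Tree(\Xi,e)}{e},
\]
after which the formula follows from the elementary identity $d=\sum_{e\mid d}\varphi(e)$ applied to $\#\Tree(\Xi)=\sum_d dF(d)$. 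Your Burnside heuristic (``$\varphi(d)$ counts primitive generators'') points at this last step, but without the filling/divided-passport machinery you have no way to identify your rooted counts with the actual $G(d)$, and your description of which trees contribute to $G(d)$ is off: $G(d)$ aggregates trees whose symmetry order is any \emph{multiple} of $d$, not those with exact order $d$.
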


\subsection{Organization of this paper}

In Section \ref{sec:LWBP-tree}, we will give the definition of labeled passport and labeled WBP-trees. Then we will use these concepts to simplify the enumeration of WBP-trees. We will use precise language to describe the combinatorial problems we mentioned above.

In Section \ref{sec:divided_passport_and_formula}, we will conclude the formula in Throrem \ref{thm:number_of_Tree_Xi}, by studying the relationship between the symmetry of trees and its passports.

In Section \ref{sec:explicit_formula}, we will use the formula in Theorem \ref{thm:number_of_Tree_Xi} to get the explicit formula of Problem \ref{prob:combination}. Since the passport $\Xi = (q^p, p^q)$ is simple enough, we can provide explicit formulas for all parameters in Theorem \ref{thm:number_of_Tree_Xi}. Then we will get the explicit formula in Theorem \ref{thm:main}.

\subsection{Acknowledgment}

I would like to express my sincere gratitude to Bin Xu and Sicheng Lu for introducing me to this problem. I am particularly indebted to Sicheng Lu for his critical feedback during the preparation of the manuscript, which significantly improved the precision of the language.

\section{Labeled weighted bi-colored plane trees} \label{sec:LWBP-tree}

To consider the enumeration of WBP-tree, we can further consider the enumeration of WBP-tree with labels at its vertices. The benefit is that the labels can break the symmetry of tree, making the enumeration easier.

\subsection{Labeled weighted bi-colored plane tree and labeled passport} \label{subsec:def_LWBP-tree_and_labeled_passport}

We will give the definition of labeled weighted bi-colored plane tree later and the definition of labeled passport first. The labeled passport plays the same role as passport in recording the information of WBP-trees. Since there are labels at the vertices of labeled weighted bi-colored plane, there will also be some labels at the weights of labeled passport.

\begin{definition} \label{def:labeled_passport}
    A \textbf{labeled passport} is a pair of multiset $\Xi = (\Xi^+, \Xi^-)$, where $\Xi^{\pm}$ are subsets of $\Zpos \times S$ and $S$ is set of labels. Element $(K, k)$ of $\Xi^{\pm}$, where $K$ is called the weight and $k$ the label, will be denoted by $K_k$. Then $\Xi^{\pm}$ can be divided into the weight part $\Xi^{\pm}_W := \{K \,|\, K_k := (K,k) \in \Xi^{\pm}\}$ and the label part $\Xi^{\pm}_L := \{k \,|\, K_k := (K,k) \in \Xi^{\pm}\}$.
\end{definition}
    
\begin{notation} \label{note:labeled_passport}
    In the definition of labeled passport,
    \begin{enumerate}
        \item The set of labels $S$ is unimportant, because we only need the weights with different labels to be considered different. Then the re-labeling is permitted, as long as the weights with different labels still have different labels. We set that labels we will use later (including the star label $*$ and numbers in $Z_{\geq 0}$) are in $S$.
        \item We set label $0$ as default label, and will omit the label $0$ when writing a passport. Then \textbf{passport without labels is automatically a labeled passport}, with all its weights labeled by $0$. When we talk about passport with $n$ labels, we are talking about passport with $n$ non-zero labels.
        \item A labeled passport is also denoted in power notation, i.e. 
        \[
        \Xi = (\Xi^{+} = {K_{1,k_1}}^{\lambda_1} \dots {K_{u,k_u}}^{\lambda_u} , \Xi^{-} = {L_{1,l_1}}^{\mu_1} \dots {L_{v,l_v}}^{\mu_v}) ,
        \]
        where label $K_{i,k_i} = (K_i, k_i)$ and $L_{j,l_j} = (L_j, l_j)$ are pairwise different respectively.
        \item The number of black(white) points is also $\#\Xi^+(\#\Xi^-)$, and the total number of points is $\#\Xi := \#\Xi^+ + \#\Xi^-$.
    \end{enumerate}
\end{notation}

We define the labeling of WBP-tree as a bijection from its vertices to the set of weights and labels, more accurately, the labeled passport.

\begin{definition} \label{def:labeling_of_tree_and_LWBP-tree}
    \begin{enumerate}
        \item A \textbf{labeling} of a WBP-tree $T = (V^+\sqcup V^-, E, \mathcal{W})$ is a labeled passport $\Xi = (\Xi^+, \Xi^-)$ and a pair of bijections from sets to multisets 
        \begin{equation*}
            \Lab^{\pm} : V^{\pm} \to \Xi^{\pm} ,
        \end{equation*}
        Each of the labeling bijection induces two map called the weight map and label map
        \begin{equation*}
            \Lab_W^{\pm} : V^{\pm} \to \Xi^{\pm}_W \subseteq \Zpos ,\quad \Lab_L^{\pm} : V^{\pm} \to \Xi^{\pm}_L \subseteq S .
        \end{equation*}
        We require that the weight part of the labeled passport $\Xi^{\pm}_W$ is the collection of weight of $V^\pm$. We also require that the weight map satisfies the compatibility condition with the weight map of vertex 
        \begin{equation} \label{eq:condition_weight_compatibility}
            \Lab_W^{\pm}(x) = \wt(v) ,\forall x \in V^{\pm} .
        \end{equation}
        \item We call the quintuple $T_L = (V^+\sqcup V^-, E, \mathcal{W}, \Xi, \Lab^{\pm})$ a \textbf{labeled weighted bi-colored plane tree} or labeled WBP-tree (\textbf{LWBP-tree} in short). The pair $\Xi = (\Xi^+, \Xi^-)$ is called the \textbf{labeled passport} of the LWBP-tree.
    \end{enumerate}
\end{definition}

\begin{notation} \label{note:bijection_from_set_to_multiset}
    In the definition of LWBP-trees,
    \begin{enumerate}
        \item The map from set to multiset $f : A \to B = {K_1}^{\lambda_1} \dots K_u^{\lambda_u}$ can be simply defined. $f$ is injective, if $\#f^{-1}(K_i) \leq \lambda_i, \forall 1 \leq i \leq u$. $f$ is surjective, if $\#f^{-1}(K_i) \geq \lambda_i, \forall 1 \leq i \leq u$. $f$ is bijective, if it is injective and surjective.
        \item A WBP-tree $T$ have its unlabeled passport $\Xi$. Though by Notation \ref{note:labeled_passport}, $\Xi$ can be regard as a labeled passport with all weight labeled by $0$. Then there are also a natural labeling on $T$, by defining $\Lab_W^{\pm}(x) = \wt(x)$ and $\Lab_L^{\pm}(x) \equiv 0$. That is to say, \textbf{a WBP-tree is automatically a LWBP-tree}.
    \end{enumerate}
\end{notation}

The following definition give the ``the same'' relationship between LWBP-trees. It will show how we regard the trees drawn on plane, when considering the weights and labels. In other words, we are essentially talk about the equivalence class of LWBP-trees.

\begin{definition} \label{def:LWBP-tree_the_same}
    Two LWBP-tree $T_{L,i} = (V^+_i\sqcup V^-_i, E_i, \mathcal{W}_i, \Xi^\pm, \Lab^{\pm}_i), i = 1, 2$ are regarded as \textbf{the same} if there is an orientation-preserving homeomorphism $I : \mathbb{R}^2 \to \mathbb{R}^2$ such that
    \begin{enumerate}
        \item $I(V^{\pm}_1) = V^{\pm}_2$ , $I(E_1) = E_2$ as elements of graphs;
        \item $\mathcal{W}_2 \circ I = \mathcal{W}_1$ as weight functions;
        \item $\Lab^\pm_2 \circ I = \Lab^\pm_1$ as labeling bijections.
    \end{enumerate} 
\end{definition}

Why we do not require the passports are the same is that the labeling bijections have decided them. That is to say, if two LWBP-trees are the same, then their labeled passports must be the same too. Note that two WBP-trees are the same if and only if they regarded as LWBP-trees (by Notation \ref{note:bijection_from_set_to_multiset}) are also the same. We give the below notations to denote the set of tree with given passport.

\begin{definition} \label{def:Tree(Xi)}
    Assume $\Xi$ is a labeled passport. We donote the set of LWBP-tree with labeled passport $\Xi$ as $\Tree(\Xi)$.
\end{definition}

If $\Xi$ is unlabeled passport, then the set of WBP-tree is actually $\Tree(\Xi)$. We give an example to summarize above definitions.

\begin{example} \label{eg:tree_label}
    See Figure \ref{fig:tree_label}. We note the weights and labels of vertices on them.
    \begin{enumerate}
        \item Subfigure \ref{sub@subfig:tree_label_A} is a WBP-tree with passport $(1^3,3)$.
        \item Subfigure \ref{sub@subfig:tree_label_B}, \ref{sub@subfig:tree_label_C}, and \ref{sub@subfig:tree_label_D} are LWBP-trees with passport $(1_11_21_3,3)$. They are all labelings of WBP-tree in Subfigure \ref{sub@subfig:tree_label_A}.
        \item LWBP-tree in Subfigure \ref{sub@subfig:tree_label_B} is the same as Subfigure \ref{sub@subfig:tree_label_D}, because we can use rotating $2\pi/3$ as the ``the same'' equivalence.
        \item But LWBP-tree in Subfigure \ref{sub@subfig:tree_label_B} and Subfigure \ref{sub@subfig:tree_label_C} are different, because there are no orientation-preserving homeomorphism on $\mathbb{R}^2$ as the ``the same'' equivalence between them.
    \end{enumerate}
\end{example}

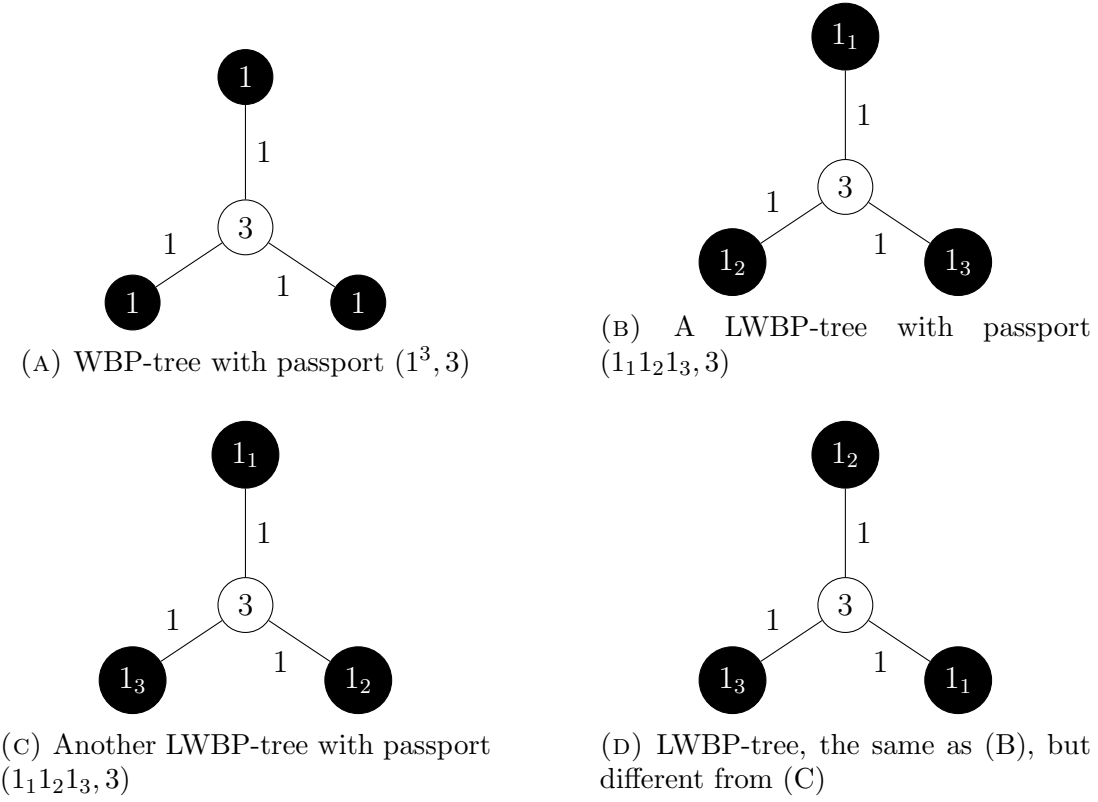
\begin{figure}[htbp]
    \centering
    \begin{subfigure}{0.45\textwidth}
        \centering
        \begin{tikzpicture}
            \node[blk] (up) at (0, 2) {1};
            \node[wht] (mid) at (0, 0) {3};
            \node[blk] (left) at (-1.5, -1) {1};
            \node[blk] (right) at (1.5, -1) {1};
            \draw (up) -- (mid) node[mid_auto] {1};
            \draw (left) -- (mid) node[mid_auto] {1};
            \draw (right) -- (mid) node[mid_auto] {1};
        \end{tikzpicture}
        \caption{WBP-tree with passport $(1^3,3)$}
        \label{subfig:tree_label_A}
    \end{subfigure}
    \hfill
    \begin{subfigure}{0.45\textwidth}
        \centering
        \begin{tikzpicture}
            \node[blk] (up) at (0, 2) {$1_1$};
            \node[wht] (mid) at (0, 0) {3};
            \node[blk] (left) at (-1.5, -1) {$1_2$};
            \node[blk] (right) at (1.5, -1) {$1_3$};
            \draw (up) -- (mid) node[mid_auto] {1};
            \draw (left) -- (mid) node[mid_auto] {1};
            \draw (right) -- (mid) node[mid_auto] {1};
        \end{tikzpicture}
        \caption{A LWBP-tree with passport $(1_11_21_3,3)$}
        \label{subfig:tree_label_B}
    \end{subfigure}

    \vspace{0.5cm}

    \begin{subfigure}{0.45\textwidth}
        \centering
        \begin{tikzpicture}
            \node[blk] (up) at (0, 2) {$1_1$};
            \node[wht] (mid) at (0, 0) {3};
            \node[blk] (left) at (-1.5, -1) {$1_3$};
            \node[blk] (right) at (1.5, -1) {$1_2$};
            \draw (up) -- (mid) node[mid_auto] {1};
            \draw (left) -- (mid) node[mid_auto] {1};
            \draw (right) -- (mid) node[mid_auto] {1};
        \end{tikzpicture}
        \caption{Another LWBP-tree with passport $(1_11_21_3,3)$}
        \label{subfig:tree_label_C}
    \end{subfigure}
    \hfill
    \begin{subfigure}{0.45\textwidth}
        \centering
        \begin{tikzpicture}
            \node[blk] (up) at (0, 2) {$1_2$};
            \node[wht] (mid) at (0, 0) {3};
            \node[blk] (left) at (-1.5, -1) {$1_3$};
            \node[blk] (right) at (1.5, -1) {$1_1$};
            \draw (up) -- (mid) node[mid_auto] {1};
            \draw (left) -- (mid) node[mid_auto] {1};
            \draw (right) -- (mid) node[mid_auto] {1};
        \end{tikzpicture}
        \caption{LWBP-tree, the same as (B), but different from (C)}
        \label{subfig:tree_label_D}
    \end{subfigure}

    \caption{WBP-tree and LWBP-trees}
    \label{fig:tree_label}
\end{figure}

\subsection{The theorem about simple labeled passports} \label{subsec:thm_YYK}

The following theorem in \cite{kochetkov2015enumeration} answers why we need LWBP-trees.

\begin{definition} \label{def:simple_partition_decomposable}
    Let $\Xi = (\Xi^{+} = {K_{1,k_1}}^{\lambda_1} \dots {K_{u,k_u}}^{\lambda_u}, \Xi^{-} = {L_{1,l_1}}^{\mu_1} \dots {L_{v,l_v}}^{\mu_v})$ be a labeled passport.
    \begin{enumerate}
        \item The passport $\Xi$ is called \textbf{simple}, if the labeled weights in $\Xi^{\pm}$ are respectively pairwise different, i.e. 
        \begin{equation*}
            \lambda_i = \mu_j = 1 , \forall 1 \leq i \leq u, 1\leq j \leq v .
        \end{equation*}
        That is to say, $\Xi^\pm$ are actually \textbf{sets}.
        \item For $n\in\Zpos$, an \textbf{n-partition} of $\Xi$ is $n$ pairs of passports $\Xi_i = (\Xi_i^+, \Xi_i^-)$ satisfing the following condition that
        \begin{equation} \label{eq:condition_partition_multiset}
            \Xi^+ = \coprod_{i=1}^{n} \Xi_i^+ ,\quad \Xi^- = \coprod_{i=1}^{n} \Xi_i^- ,
        \end{equation}
        i.e. $\Xi_i^+(\Xi_i^-)$ are partition of multiset $\Xi^+(\Xi^-)$.

        As passports, the following equations hold
        \begin{equation} \label{eq:condition_partition_sum_weight}
            \sum_{K_k \in \Xi_i^+}{K} = \sum_{L_l \in \Xi_i^-}{L},\  1 \leq i \leq n .
        \end{equation}

        We will denote the partition of passport $\Xi$ as 
        \begin{equation}
            \Xi = \coprod_{i=1}^{n} \Xi_i .
        \end{equation}
        \item Each passport $\Xi$ admits trivial 1-partition where the only partition component $\Xi_1 = \Xi$. Passport $\Xi$ is called \textbf{decomposable} if it admits a nontrivial partition.
    \end{enumerate}
\end{definition}

\begin{theorem}[\cite{kochetkov2015enumeration}]\label{thm:passport_simple}
    Let $\Xi$ be a labeled passport.
    \begin{enumerate}
        \item If $\Xi$ is simple and nondecomposable, then
        \begin{equation} \label{eq:number_Tree_simple_nondecomposable}
            \#\Tree(\Xi) = (\#\Xi - 2)! .
        \end{equation}
        \item If $\Xi$ is simple and decomposable, then $\#\Tree(\Xi)$ is a sum over all possible partitions. The contribution of each possible $n$-partition $\Xi = \coprod_{i=1}^{n} \Xi_i$ ($n\geq1$) is    
        \begin{equation} \label{eq:number_Tree_simple_decomposable}
            (-1)^{n-1} (\#\Xi-1)^{n-2} \prod_{i=1}^{n} {(\#\Xi_i-1)!} \ .
        \end{equation}
    \end{enumerate}
\end{theorem}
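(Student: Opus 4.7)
The plan is to prove both statements together, noting that setting $n=1$ in \eqref{eq:number_Tree_simple_decomposable} gives $(-1)^0(\#\Xi-1)^{-1}(\#\Xi-1)! = (\#\Xi-2)!$, so that \eqref{eq:number_Tree_simple_nondecomposable} is the $n=1$ case of the general sum. Thus it suffices to establish the full inclusion-exclusion formula over all partitions of $\Xi$, with the nondecomposable case being the degenerate instance where only the trivial partition contributes.

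For the nondecomposable base case, I would set up a Pr\"ufer-style encoding. Because $\Xi$ is simple, each vertex in a tree $T \in \Tree(\Xi)$ is uniquely identified by its labeled weight. Fix a total order on the labels of $\Xi^+ \sqcup \Xi^-$ and iteratively delete the leaf with the smallest label, each time recording the label of its unique neighbor; since the tree is planar, no extra planar data needs to be tracked at each deletion. This produces a word of length $\#\Xi-2$ in the labels of non-leaf vertices, and the claim is that this map is a bijection onto such words. The essential invertibility check is that after each deletion the remaining sub-multiset of labels (with the neighbor's weight reduced by the deleted edge's weight) remains a simple nondecomposable passport; nondecomposability is inherited since any nontrivial balanced partition of the reduced passport would lift to a nontrivial balanced partition of $\Xi$, contradicting the hypothesis.

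For the decomposable case, I propose an inclusion-exclusion over balanced set partitions of $\Xi$. For a partition $\Xi = \coprod_{i=1}^n \Xi_i$ satisfying \eqref{eq:condition_partition_multiset} and \eqref{eq:condition_partition_sum_weight}, consider WBP-trees ``refining'' this partition, meaning those obtained by selecting a plane tree on each block and then connecting the $n$ blocks into a single plane tree via $n-1$ extra ``bridge'' arrangements. The per-block contribution should be $(\#\Xi_i-1)!$, naturally interpreted as simple plane trees on $\Xi_i$ with a distinguished oriented edge (a half-edge marked for gluing), and the inter-block count should be $(\#\Xi-1)^{n-2}$, a planar/weighted analogue of Cayley's formula on the $n$ super-vertices with $\#\Xi-1$ candidate gluing slots. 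Applying M\"obius inversion on the partition lattice produces the alternating sign $(-1)^{n-1}$, extracting from this over-count the trees that do not refine any proper partition, and yielding \eqref{eq:number_Tree_simple_decomposable}.

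The main obstacle, I expect, is pinning down the gluing model so that $(\#\Xi-1)^{n-2}\prod_i(\#\Xi_i-1)!$ is the correct count with no extra multiplicities. In a plane tree the cyclic order of edges at each vertex is part of the data, so when two blocks are merged at a super-edge one must decide which face of each block hosts the new edge; simplicity of $\Xi$ (distinct labels) prevents automorphism issues, but verifying that the planar super-tree count collapses to a clean Cayley-type factor $(\#\Xi-1)^{n-2}$ rather than something involving the individual block sizes requires careful bookkeeping of rooted-versus-unrooted and planar-versus-abstract counts. Once this combinatorial bookkeeping is settled, the M\"obius inversion step becomes essentially formal.
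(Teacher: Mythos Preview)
The paper does not prove this theorem; it is quoted from \cite{kochetkov2015enumeration} and used as a black box from Section~\ref{sec:divided_passport_and_formula} onward, so there is no in-paper argument to compare against.

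That said, your proposal contains a concrete gap. In the Pr\"ufer encoding for part~(1) you assert that ``since the tree is planar, no extra planar data needs to be tracked at each deletion.'' This is false. Take the example the paper itself computes: $\Fill(\Xi/2) = (2_1\,4_1\,2_{*,1},\,8_1)$, a star with one white center of weight $8$ and three distinctly labeled black leaves. The theorem gives $(4-2)! = 2$ WBP-trees, corresponding to the two cyclic orders of the three leaves around the center. A standard Pr\"ufer code records only the neighbor of each deleted leaf and produces the single word $(8_1,8_1)$ for both trees; the cyclic position of a leaf in its neighbor's edge order is precisely the planar data the code throws away. Hence your encoding is not injective and the base case \eqref{eq:number_Tree_simple_nondecomposable} is not established.

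The standard route to this formula (in Kochetkov, and earlier in Goulden--Jackson and others) goes through the correspondence between bicolored plane trees with $N = \#\Xi - 1$ edges and factorizations of a fixed $N$-cycle in $S_N$ as a product of two permutations with prescribed cycle types $\Xi^+$ and $\Xi^-$; nondecomposability becomes transitivity of the action, and $(\#\Xi-2)! = (N-1)!$ is then a character computation or a direct bijection on such factorizations. Your inclusion--exclusion outline for part~(2) is in the right spirit, but---as you already concede---the planar gluing bookkeeping that would produce the factor $(\#\Xi-1)^{n-2}$ is the entire difficulty, and it is not carried out here.
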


As we can see, we need the passport to be simple when using the above theorem. For regular passports, we cannot require them to be simple. However, we can label these passports so that they will simple.

\subsection{The filling of labeled passports} \label{subsec:filling_passport}
Theorem \ref{thm:passport_simple} can only handle the problem of simple passport. For nonsimple passport, we define the filling of it, which will be simple. And there will be relation between LWBP-trees with filled passport and with original passport.

\begin{definition} \label{def:filling_of_passport}
    The \textbf{filling} of a labeled passport $\Xi = (\Xi^+, \Xi^-)$ with $\Xi^{+} = {K_{1,k_1}}^{\lambda_1} \dots {K_{u,k_u}}^{\lambda_u} , \Xi^{-} = {L_{1,l_1}}^{\mu_1} \dots {L_{v,l_v}}^{\mu_v}$ is denoted by :
    \begin{equation}
        \Fill(\Xi) = (\Fill(\Xi)^+, \Fill(\Xi)^-)
    \end{equation}
    \begin{equation}
        \Fill(\Xi)^+ = \{K_{i,k_i,s} := (K_i, (k_i, s)) \,|\, 1 \leq i \leq u, 1 \leq s \leq \lambda_i\}
    \end{equation}
    \begin{equation}
        \Fill(\Xi)^- = \{L_{i,l_i,t} := (K_j, (k_j, t)) \,|\, 1 \leq j \leq v, 1 \leq t \leq \mu_i\}
    \end{equation}
    where labels $(k_i, s)$ are called the \textbf{double labels}. Then every labeled weights in $\Fill(\Xi)^{\pm}$ will be pairwise different, thus $\Fill(\Xi)$ is \textbf{simple} labeled passport.
\end{definition}

\begin{notation} \label{note:filling_of_passport}
    For label $(0, s)$, we will omit $0$ and denote it as $s$ for convenience. For labeled weight $K_k$ being a unique element in $\Xi$, the corresponding element $K_{k,1}$ in $\Fill(\Xi)$ will be also denoted by $K_k$ for convenience.
\end{notation}

The relation between LWBP-trees with filled passport and with original passport is as follow.

\begin{definition} \label{def:pi_map_of_FTree_to_Tree}
    Let $\Xi$ be a labeled passport.
    \begin{enumerate}
        \item We can define the bijections $\LabFill^\pm : \Fill(\Xi)^\pm \to \Xi^\pm$ by forgetting the second elements of double labels
        \begin{equation} \label{eq:def_LabFill}
            \LabFill^+ : K_{i,k_i,s} \mapsto K_{i,k_i} ,\quad \LabFill^- : L_{j,l_j,t} \mapsto L_{j,k_j} .
        \end{equation}
        \item We define the LWBP-tree with filled passport as $\FTree(\Xi) := \Tree(\Fill(\Xi))$.
        \item By using the above map $\LabFill^\pm$, we define the canonical surjection $\pi : \FTree(\Xi) \to \Tree(\Xi)$ by
        \begin{equation} \label{eq:def_pi_map}
            T_F = (V, E, \mathcal{W}, \Fill(\Xi), \Lab^{\pm}) \mapsto T_L = (V, E, \mathcal{W}, \Xi, \LabFill^\pm \circ \Lab^\pm)
        \end{equation}
    \end{enumerate}
\end{definition}

\begin{proof}
    We only need to prove that $\pi$ is well-defined. First, $\LabFill^\pm \circ \Lab^{\pm}$ are bijections, because they are composite of bijections. Second, For two LWBP-trees $T_{F,i} \in \FTree(\Xi), i=1,2$ that are the same, the ``the same'' equivalence $I$ of them is also that of $\pi(T_{F,i})$, because $\Lab^\pm_2 \circ I = \Lab^\pm_1$ implies $\LabFill^\pm \circ \Lab^\pm_2 \circ I = \LabFill^\pm \circ \Lab^\pm_1$.
\end{proof}

\begin{example}
    See Figure \ref{fig:tree_label}. Set $\Xi = (1^3,3)$, then $\Fill(\Xi) = (1_11_21_3,3)$. We have that $\pi$ map of trees in Subfigure \ref{sub@subfig:tree_label_B}, \ref{sub@subfig:tree_label_C}, and \ref{sub@subfig:tree_label_D} are all tree in Subfigure \ref{sub@subfig:tree_label_A}.
\end{example}

\subsection{The symmetry of LWBP-tree} \label{subsec:sym_of_LWBP-tree}
The surjection $\pi$ provide the relation between $\FTree(\Xi)$ and $\Tree(\Xi)$. By Theorem \ref{thm:passport_simple}, we can compute $\#\FTree(\Xi)$ because $\Fill(\Xi)$ is simple. Thus we shall make use of $\pi$ to get the quantity relationship between $\#\FTree(\Xi)$ and $\#\Tree(\Xi)$. As we will see later, this relationship is related to the symmetry of LWBP-tree.

\begin{definition} \label{def:automorphism_and_symmetry}
    Let $T_L$ be a LWBP-tree.
    \begin{enumerate}
        \item An \textbf{automorphism} of $T_L$, is a ``the same'' equivalence between $T_L$ itself. The \textbf{automorphism group} of $T_L$ is denoted by $\Aut(T_L)$.
        \item $T_L$ is \textbf{$e$-order symmetric}, if $\Aut(T_L) \cong \mathbb{Z} / e\mathbb{Z}$.
        \item For a labeled passport $\Xi$, we denote the set of LWBP-trees with passport $\Xi$ and e-order symmetry as $\Tree(\Xi, e)$.
    \end{enumerate}
\end{definition}

The following proposition is basic properties about symmetry of plane tree.

\begin{proposition} \label{prop:realize_symmetry}
    For arbitrary LWBP-tree $T_L$, there exist $e \in \Zpos$ such that $\Aut(T_L) \cong \mathbb{Z} / e\mathbb{Z}$. That is to say, every LWBP-tree is $e$-order symmetric for some $e \in \Zpos$. Moreover, every $e$-order symmetric LWBP-tree can be actually realized as an lebeled weighted embedded tree which is truly $e$-order rotational symmetric in $\mathbb{R}^2$.
\end{proposition}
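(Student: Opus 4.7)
The plan is to establish the two assertions in turn: first that $\Aut(T_L)$ is always cyclic, then that the cyclic symmetry can be realized geometrically as a true Euclidean rotation. The key tool for the first assertion is a fixed vertex common to the whole group, obtained by a center-of-tree argument. Any automorphism $I$ is an orientation-preserving homeomorphism of $\mathbb{R}^2$ that preserves the bipartition $V^+ \sqcup V^-$ (by Definition~\ref{def:LWBP-tree_the_same}), so it permutes vertices while respecting colors and tree distances, and in particular it preserves the combinatorial center of the underlying tree. The center consists of either one vertex or two adjacent vertices; in the latter case those two vertices lie in opposite color classes and cannot be swapped by a color-preserving map. Therefore there is a vertex $v_0$ that is fixed by every element of $\Aut(T_L)$. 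Since $I$ is orientation-preserving and fixes $v_0$, it acts on the $n := \deg(v_0)$ incident edges by a cyclic shift of their plane cyclic order; and since a plane tree is rigidly reconstructed from any vertex together with the cyclic sequence of rooted subtrees at that vertex, $I$ is fully determined by this shift. Hence $\Aut(T_L)$ embeds as a subgroup of $\mathbb{Z}/n\mathbb{Z}$, and is cyclic of order $e$ for some $e \mid n$.

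For the realization statement, suppose $\Aut(T_L) = \langle \sigma \rangle \cong \mathbb{Z}/e\mathbb{Z}$. The generator $\sigma$ acts on the $n$ incident edges at $v_0$ as a cyclic shift of order $e$, producing $n/e$ orbits of size $e$. I would construct a concrete embedding of $T_L$ as follows: place $v_0$ at the origin of $\mathbb{R}^2$ and draw the $n$ edges as arcs exiting $v_0$ at angles $2\pi k/n$, ordered compatibly with the cyclic order at $v_0$. Choose one representative edge from each of the $n/e$ orbits and embed its associated subtree as a plane tree (preserving the local cyclic orders inherited from $T_L$) inside a thin angular wedge around the chosen ray, using only standard planarity of finite trees. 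Then propagate the embedding to the remaining edges and subtrees in each orbit by applying the Euclidean rotation $\rho$ of angle $2\pi/e$ and its powers $\rho^2,\dots,\rho^{e-1}$. By construction the resulting embedded tree is invariant under $\rho$, and $\rho$ realizes the combinatorial automorphism $\sigma$ as a genuine rotation of $\mathbb{R}^2$. Since we have only changed the embedding within its ``the same'' equivalence class of Definition~\ref{def:LWBP-tree_the_same}, the new embedded tree still represents $T_L$.

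The main obstacle is the first step: verifying that the group of color-, weight-, label-, and plane-structure-preserving homeomorphisms is forced to share a common fixed vertex, and then that this group faithfully injects into a cyclic rotation group. The bi-coloring is essential here, since it excludes the edge-swap symmetries that can make the automorphism group of an abstract (uncolored) tree fail to be cyclic. Once these rigidity properties are in hand, the geometric realization reduces to the essentially routine inductive wedge-packing construction sketched above.
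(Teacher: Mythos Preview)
The paper does not actually prove this proposition; it is stated as a ``basic property about symmetry of plane trees'' and left without argument. Your proposal is correct and supplies precisely the details the paper omits. The center-of-tree argument, combined with the bi-coloring to rule out the edge-swap (the two central vertices, if there are two, have opposite colors and hence cannot be exchanged), is the standard route to a common fixed vertex $v_0$; and your observation that an orientation-preserving automorphism fixing $v_0$ is determined by its cyclic shift on the incident edges gives the injection $\Aut(T_L)\hookrightarrow\mathbb{Z}/n\mathbb{Z}$. One small point you might make fully explicit is why this map is injective: it relies on the rigidity of rooted plane trees, namely that an orientation-preserving self-equivalence fixing $v_0$ and acting trivially on the edges at $v_0$ must be the identity, which follows by induction on depth. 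The wedge-packing realization is likewise correct, and your final remark that the re-embedding stays in the same equivalence class of Definition~\ref{def:LWBP-tree_the_same} is exactly what is needed to conclude.
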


Assume $\Xi$ to be a labeled passport and $\Fill(\Xi)$ its filled passport. Since the labels of tree $T_F \in \FTree(\Xi)$ are pairwise different, $\Aut(T_L)$ is trivial. But we can still talk about the ``symmetry'' of $T_F$, by using its original tree $\pi(T_F)$, while there may be some symmetry of $\pi(T_F)$ in $\Tree(\Xi)$.

\begin{definition} \label{def:symmetry_of_FTree}
    Assume $\Xi$ to be a labeled passport. We define the set of LWBP-trees with filled passport $\Fill(\Xi)$ and ``e-order symmetry'' as $\FTree(\Xi, e) := \pi^{-1}(\Tree(\Xi, e))$, though all of its elements are \textbf{not} e-order symmetry.
\end{definition}

\begin{corollary} \label{cor:Tree_eq_coprod_Tree_symmetry}
    Assume $\Xi$ to be a labeled passport. Then
        \begin{equation} \label{eq:Tree_eq_coprod_Tree_symmetry}
            \Tree(\Xi) = \coprod_{e=1}^{+\infty} \Tree(\Xi, e) ,\quad \FTree(\Xi) = \coprod_{e=1}^{+\infty} \FTree(\Xi, e) .
        \end{equation}
        Therefore
        \begin{equation} \label{eq:number_Tree_eq_sum_number_Tree_symmetry}
            \#\Tree(\Xi) = \sum_{e=1}^{+\infty} \#\Tree(\Xi, e) ,\quad \#\FTree(\Xi) = \sum_{e=1}^{+\infty} \#\FTree(\Xi, e) .
        \end{equation}

        All disjoint unions and summations above are finite.
\end{corollary}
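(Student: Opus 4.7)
The plan is to derive the corollary directly from Proposition \ref{prop:realize_symmetry}, which does the heavy lifting by asserting that the automorphism group of any LWBP-tree is cyclic. The statement to be proved is essentially a bookkeeping consequence.

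First I would observe that by Proposition \ref{prop:realize_symmetry}, every $T_L \in \Tree(\Xi)$ satisfies $\Aut(T_L) \cong \mathbb{Z}/e\mathbb{Z}$ for some $e \in \Zpos$. Since the isomorphism type of a finite group determines its order, this $e$ is uniquely determined by $T_L$, so the assignment $T_L \mapsto e$ is a well-defined function $\Tree(\Xi) \to \Zpos$. The fiber over $e$ is precisely $\Tree(\Xi, e)$ by Definition \ref{def:automorphism_and_symmetry}, so the partition of $\Tree(\Xi)$ by this function is exactly the disjoint union
\[
\Tree(\Xi) = \coprod_{e=1}^{+\infty} \Tree(\Xi, e) ,
\]
which gives the first decomposition.

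For the $\FTree$ decomposition, I would apply the surjection $\pi : \FTree(\Xi) \to \Tree(\Xi)$ from Definition \ref{def:pi_map_of_FTree_to_Tree} together with the defining equality $\FTree(\Xi, e) := \pi^{-1}(\Tree(\Xi, e))$ from Definition \ref{def:symmetry_of_FTree}. Since preimages commute with disjoint unions, the first decomposition immediately yields
\[
\FTree(\Xi) = \pi^{-1}(\Tree(\Xi)) = \coprod_{e=1}^{+\infty} \pi^{-1}(\Tree(\Xi, e)) = \coprod_{e=1}^{+\infty} \FTree(\Xi, e) .
\]
Taking cardinalities of both disjoint unions yields the two summation formulas in \eqref{eq:number_Tree_eq_sum_number_Tree_symmetry}.

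For finiteness, the simplest route is to bound $e$ by the size of the tree: any $e$-order symmetry realized as a rotation of the embedded tree permutes the finite vertex set $V^+ \sqcup V^-$, so its order $e$ is bounded by $\#\Xi$, giving $\Tree(\Xi, e) = \emptyset$ for $e > \#\Xi$. Alternatively (and even more cheaply), one may note that $\Tree(\Xi)$ is a finite set — the number of embedded trees with bounded vertex count and bounded total weight is finite — so only finitely many $\Tree(\Xi, e)$ can be nonempty, and the same applies to $\FTree(\Xi)$ via $\pi$. I do not anticipate any genuine obstacle: the entire statement is a formal corollary of Proposition \ref{prop:realize_symmetry} plus the definitions of $\Tree(\Xi, e)$ and $\FTree(\Xi, e)$, and the only substantive content (that automorphism groups are cyclic) has been absorbed into the cited proposition.
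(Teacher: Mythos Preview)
Your proposal is correct and follows essentially the same approach as the paper's proof, which simply cites Proposition \ref{prop:realize_symmetry} for the $\Tree(\Xi)$ decomposition and the definition of $\FTree(\Xi, e)$ as preimages for the $\FTree(\Xi)$ decomposition. You have merely spelled out the details (uniqueness of $e$, preimages commuting with disjoint unions, finiteness) that the paper leaves implicit.
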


\begin{proof}
    That every LWBP-tree is $e$-order symmetry for some $e \in \Zpos$ by Proposition \ref{prop:realize_symmetry} ,and the definition of $\FTree(\Xi, e)$ as preimages, imply \eqref{eq:Tree_eq_coprod_Tree_symmetry}.
\end{proof}

\begin{example}
    See Figure \ref{fig:tree_label}. The tree in Subfigure \ref{sub@subfig:tree_label_A} is in $\Tree(\Xi, 3)$, while trees in Subfigure \ref{sub@subfig:tree_label_B}, \ref{sub@subfig:tree_label_C},and \ref{sub@subfig:tree_label_D} are in $\FTree(\Xi, 3)$.
\end{example}

The following theorem show the quantity relationship between $\#\FTree(\Xi, e)$ and $\#\Tree(\Xi, e)$. The passport in the theorem is labeled, and the theorem about unlabeled passport in \cite{kochetkov2015enumeration} is its simple inference.

\begin{theorem} \label{thm:p_Tree_eq_e_FTree}
    Let $\Xi = (\Xi^{+} = {K_{1,k_1}}^{\lambda_1} \dots {K_{u,k_u}}^{\lambda_u}, \Xi^{-} = {L_{1,l_1}}^{\mu_1} \dots {L_{v,l_v}}^{\mu_v})$ be a labeled passport. Define
    \begin{equation} \label{eq:def_p_passport}
        p(\Xi) := \prod_{i=1}^{u} {\lambda_i}! \prod_{j=1}^{v} {\mu_j}! .
    \end{equation}
    Then the relation between $\#LTree(\Xi, e)$ and $\#\Tree(\Xi, e)$ can be expressed as 
    \begin{equation} \label{eq:p_Tree_eq_e_FTree}
        p(\Xi) \cdot \#\Tree(\Xi, e) = e \cdot \#\FTree(\Xi, e) .
    \end{equation}
\end{theorem}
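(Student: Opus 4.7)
The plan is to prove the identity by computing each fiber of the canonical surjection $\pi:\FTree(\Xi,e)\to \Tree(\Xi,e)$ (well-defined since $\FTree(\Xi,e)=\pi^{-1}(\Tree(\Xi,e))$ by Definition \ref{def:symmetry_of_FTree}) and then summing. Since \eqref{eq:p_Tree_eq_e_FTree} is equivalent to $\#\FTree(\Xi,e) = \#\Tree(\Xi,e)\cdot p(\Xi)/e$, it suffices to show $|\pi^{-1}(T_L)| = p(\Xi)/e$ for every $T_L \in \Tree(\Xi,e)$.

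To count a single fiber, I would fix a planar realization of $T_L$. A preimage $T_F \in \pi^{-1}(T_L)$ is obtained from $T_L$ by choosing, for every black (resp.\ white) vertex of labeled weight $K_{i,k_i}$ (resp.\ $L_{j,l_j}$), a second coordinate in $\{1,\dots,\lambda_i\}$ (resp.\ $\{1,\dots,\mu_j\}$) so that within each label class the chosen coordinates are pairwise distinct. Such a choice is precisely a tuple of bijections between vertex label-classes and index sets, so the total number of choices is $\prod_i \lambda_i!\prod_j \mu_j! = p(\Xi)$. Two choices yield the same element of $\FTree(\Xi)$ exactly when they are related by a plane homeomorphism that is itself a self-equivalence of $T_L$, i.e.\ by an element of $\Aut(T_L)$. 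Hence $|\pi^{-1}(T_L)|$ equals $p(\Xi)$ divided by the common orbit size of this $\Aut(T_L)$-action, once we show the action is free.

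The technical heart of the argument is this freeness claim. Suppose some $I\in\Aut(T_L)$ preserves a choice $f$. For any vertex $v$, the image $I(v)$ lies in the same label class as $v$ (since $I$ respects $\Lab$ by Definition \ref{def:LWBP-tree_the_same}) and carries the same second coordinate under $f$; by distinctness within each label class, $I(v)=v$. But in a plane tree a self-equivalence fixing every vertex also fixes every edge (edges of a tree are determined by their endpoints) and respects cyclic orders automatically, so $I$ is the identity equivalence class. Thus the action is free; combined with $|\Aut(T_L)|=e$ from Proposition \ref{prop:realize_symmetry}, every orbit has size $e$ and $|\pi^{-1}(T_L)|=p(\Xi)/e$. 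As a byproduct this yields the divisibility $e\mid p(\Xi)$, necessary for the fiber count to make sense. Summing $|\pi^{-1}(T_L)|=p(\Xi)/e$ over $T_L\in\Tree(\Xi,e)$ yields \eqref{eq:p_Tree_eq_e_FTree}.
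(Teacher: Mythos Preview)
Your proposal is correct and follows essentially the same approach as the paper: the paper phrases the argument as double counting on the set of pairs $(T_L,\text{labeling filling})$, but the content---that each $T_L$ admits exactly $p(\Xi)$ labeling fillings, and that $\Aut(T_L)$ acts freely on these fillings with each orbit corresponding to a single element of $\pi^{-1}(T_L)$---is identical to yours. Your freeness argument (simplicity of $\Fill(\Xi)$ forces a filling-preserving automorphism to fix every vertex, hence to be the identity) is exactly the paper's Step~2.3.
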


\begin{proof}
    Given a LWBP-tree $T_L = (V^+\sqcup V^-, E, \mathcal{W}, \Xi^\pm, \Lab^{\pm}) \in \Tree(\Xi)$ and a pair of bijections $\Lab^\pm_F : V^\pm \to \Fill(\Xi)^\pm$, we can make use of them to construct a new LWBP-tree in $\FTree(\Xi)$, by defining $T_F := (V^+\sqcup V^-, E, \mathcal{W}, \Fill(\Xi)^\pm, \Lab^{\pm}_F)$. If the new tree also satisfies the condition $\pi(T_F) = T_L$, or equivalently $ \LabFill^\pm \circ \Lab^\pm_F = \Lab^\pm$, we call $\Lab^\pm_F$ is a \textbf{labeling filling} of $T_L$.

    We apply double counting to the set
    \begin{equation}
        \TreeLab(\Xi, e) := \{(T_L, \Lab^\pm_F) \,|\, T_L \in \Tree(\Xi, e), \Lab^\pm_F \text{ is a labeling filling of } T_L\} .
    \end{equation}
    
    \textbf{(Step 1)} We fix the tree $T_L$, and counting the number of $\Lab^\pm_F$ is a labeling filling of it. This is to find $\Lab^\pm_F$ such that $\LabFill^\pm \circ \Lab^\pm_F = \Lab^\pm$. We define $V(K_{i,k_i}) := (\Lab^+)^{-1}(K_{i,k_i})$ and $V(L_{j,l_j}) := (\Lab^-)^{-1}(L_{j,l_j})$. By definition of $\LabFill^\pm$ in \eqref{eq:def_LabFill}, $\LabFill^\pm \circ \Lab^\pm_F = \Lab^\pm$ is equivalent to that
    \[
        \Lab^+_F|_{V(K_{i,k_i})} : V(K_{i,k_i}) \to \{K_{i,k_i,s} \,|\, 1 \leq s \leq \lambda_i\} , 1 \leq i \leq u
    \]
    \[
        \Lab^-_F|_{V(L_{j,l_j})} : V(L_{j,l_j}) \to \{L_{j,l_j,t} \,|\, 1 \leq t \leq \mu_j\} , 1 \leq j \leq v
    \]
    are all bijections. It tells us the number of $\Lab^\pm_F$ is equal to the total number of bijections above. Making use of $\#V(K_{i,k_i}) = \lambda_i$ and $\#V(L_{j,l_j}) = \mu_j$, we get the total number of bijections above is exactly $p(\Xi)$. Thus 
    \begin{equation} \label{eq:TreeLab_eq_p_Tree}
        \#\TreeLab(\Xi, e) = p(\Xi) \cdot \#\Tree(\Xi, e)
    \end{equation}

    \textbf{(Step 2)} We claim that for a fixed labeling filling $\Lab^\pm_{F,1}$, there are exactly $e$ labeling filling $\Lab^\pm_{F,2}$ construct the same new trees in $\FTree(\Xi, e)$.

    \textbf{(Step 2.1)} If $T_{F,i}, i=1,2$ constructed by $\Lab^\pm_{F,i}, i=1,2$ are the same, then the ``the same'' equivalence $I$ will induce the ``the same'' equivalence of $\pi(T_{F,1}) = \pi(T_{F,2}) = T_L$ by the proof of Definition \ref{def:pi_map_of_FTree_to_Tree}, i.e. $I \in \Aut(T_L)$.
    
    \textbf{(Step 2.2)} For a fixed labeling filling $\Lab^\pm_{F,1}$, any $I \in \Aut(T_L)$ can induce a new labeling filling $\Lab^\pm_{F,2} := \Lab^\pm_{F,1} \circ I^{-1}$. It satisfies that $\Lab^\pm_{F,2} \circ I = \Lab^\pm_{F,1}$, i.e. trees $T_{F,i}$ are the same. Moreover, it satisfies that
    \[
        \LabFill^\pm \circ \Lab^\pm_{F,2} = \LabFill^\pm \circ \Lab^\pm_{F,1} \circ I^{-1} = \Lab^\pm \circ I^{-1} = \Lab^\pm ,
    \]
    i.e. it is a labeling filling of $T_L$.
    
    \textbf{(Step 2.3)} The labeling filling $\Lab^\pm_{F,2}$ constructed in (Step 2.2) is different from $\Lab^\pm_{F,1}$, unless $I = \mathbf{1}$, because $\Fill(\Xi)$ is simple passport, with all its elements pairwise different.
    
    By above steps and the definition of $\Tree(\Xi, e)$ and $T_L \in \Tree(\Xi, e)$, there are exactly $\Aut(T_L) = e$ labeling filling $\Lab^\pm_{F,2}$ construct the same new trees in $\FTree(\Xi, e)$.

    \textbf{(Step 3)} By (Step 2), we fix $T_F \in \FTree(\Xi, e)$, put $T_L = \pi(T_F)$, and there will be $e$ labeling fillings of $T_L$ construct $T_F$. It gives following equation
    \begin{equation} \label{eq:TreeLab_eq_e_FTree}
        \#\TreeLab(\Xi, e) = e \cdot \#\FTree(\Xi, e) .
    \end{equation}

    Equation \eqref{eq:TreeLab_eq_p_Tree} and \eqref{eq:TreeLab_eq_e_FTree} implies \eqref{eq:p_Tree_eq_e_FTree}.

\end{proof}

\section{Divided passports and the formula} \label{sec:divided_passport_and_formula}

Theorem \ref{thm:p_Tree_eq_e_FTree} above change the enumeration of $\Tree(\Xi)$ into that of $\FTree(\Xi, e)$, but there is still the considering of symmetry. In Theorem \ref{thm:passport_simple}, we can only compute the number of trees with simple passport, without considering symmetry. In this section, we will use method called divided passport to solve this problem, and give the formula in Theorem \ref{thm:number_of_Tree_Xi} to represent $\#Tree(\Xi)$.

\subsection{Divided passport} \label{subsec:divided_passport}
If a LWBP-tree is symmetric, by realizing it as a rotational symmetric embedded tree in $\mathbb{R}^2$ using Proposition \ref{prop:realize_symmetry}, we can divide it into some same parts. Roughly speaking, we will call the passport of one of the part the divided passport. By studying trees with divided passport, we can also know something about the original trees. 

\begin{definition} \label{def:divided_passport}
    Let $\Xi = (\Xi^{+} = {K_{1,k_1}}^{\lambda_1} \dots {K_{u,k_u}}^{\lambda_u}, \Xi^{-} = {L_{1,l_1}}^{\mu_1} \dots {L_{v,l_v}}^{\mu_v})$ be a labeled passport. Assume $d \in \mathbb{Z}_{>1}$. If for some $1 \leq i_0 \leq u$, the following conditions are satisfied
    \begin{equation} \label{eq:divided_passport_exist_condition}
        d | K_{i_0} ,\quad d | (\lambda_{i_0} - 1) ,\quad d | \lambda_i (1 \leq i \leq u \text{ and } i \ne i_0) ,\quad d | \mu_j (1 \leq j \leq v) .
    \end{equation}
    Then we define the \textbf{divided passport} $\Xi / d = ((\Xi / d)^+, (\Xi / d)^-)$ as a labeled passport
    \begin{equation} \label{eq:def_divided_passport_pos}
        (\Xi / d)^+ := (K_{i_0}/d)_* K_1^{\lambda_1/d} \dots K_{i_0-1}^{\lambda_{i_0-1}/d} K_{i_0}^{(\lambda_{i_0}-1)/d} K_{i_0+1}^{\lambda_{i_0+1}/d} \dots K_u^{\lambda_u/d} \\
    \end{equation}
    \begin{equation} \label{eq:def_divided_passport_neg}
        (\Xi / d)^- := (L_1)^{\mu_1/d} \dots (L_v)^{\mu_v/d}
    \end{equation}
    Note that $K_{i_0}/d$ is labeled by a star, and we assume there is no star label in $\Xi$ by re-labeling. It makes that $K_{i_0}/d$ is with unique label. The vertex with star label is called \textbf{symmetric center}.
    
    If for some $1 \leq j_0 \leq v$, the similar conditions hold, we can also define $\Xi / d$ using the similar approach.

    When $d = 1$, we define $\Xi / 1$ is exactly $\Xi$, which is trivial, without star label.
\end{definition}

The following proposition give some basic properties of divided passport.

\begin{proposition} \label{prop:basic_of_divided_passport}
    Assume $\Xi$ be a labeled passport and $d \in \Zpos$.
    \begin{enumerate}
        \item If $\Xi / d$ exists, $\Xi / d$ is unique.
        \item Set 
        \begin{equation} \label{eq:def_g_i}
            g_i := \\
            \begin{cases}
                \gcd(K_i; \lambda_1, \dots, \lambda_{i-1}, \lambda_i-1, \lambda_{i+1}, \dots, \lambda_u; \mu_1, \dots, \mu_v) \\
                \text{ if } 1 \leq i \leq u \\
                \gcd(L_{i-u}; \lambda_1, \dots, \lambda_u; \mu_1, \dots, \mu_{i-u-1}, \mu_{i-u}-1, \mu_{i-u+1}, \dots, \mu_v) \\
                \text{ if } u+1 \leq i \leq u+v \\
            \end{cases} .
        \end{equation}
        Then we have that $\Xi / d$ exists if and only if for some $1 \leq i_0 \leq u+v$, $d | g_{i_0}$.
        \item $(\Xi / d) / e = \Xi / de = (\Xi / e) / d$, if one of the sides of the equation exists.
    \end{enumerate}
\end{proposition}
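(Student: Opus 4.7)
The three claims are essentially bookkeeping about the divisibility data in Definition~\ref{def:divided_passport}; I plan to handle them in order, with the only delicate point being to verify that the symmetric center behaves functorially under iterated division.

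For (1), I would observe that the ``special index'' $i_0$ in Definition~\ref{def:divided_passport} is forced once $d>1$. On the black side a valid $i_0$ must satisfy $\lambda_{i_0}\equiv 1\pmod d$ while $\lambda_i\equiv 0\pmod d$ for every $i\ne i_0$; these congruences are incompatible when $d>1$, so at most one such $i_0$ exists. A simultaneous white center $j_0$ would demand $\lambda_i\equiv 0\pmod d$ for \emph{all} $i$, contradicting $\lambda_{i_0}\equiv 1\pmod d$ unless $d=1$. Hence the center, and therefore $\Xi/d$ itself, is uniquely determined whenever it exists. For (2), the system $d\mid K_{i_0}$, $d\mid (\lambda_{i_0}-1)$, $d\mid \lambda_i$ ($i\ne i_0$), $d\mid\mu_j$ that appears in Definition~\ref{def:divided_passport} is by construction the same as $d\mid g_{i_0}$ (with the analogous statement for a white center when $u+1\le i_0\le u+v$), so existence of $\Xi/d$ is tautologically equivalent to the existence of some $i_0$ with $d\mid g_{i_0}$.

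For (3), the boundary cases $d=1$ or $e=1$ are trivial, so I assume $d,e>1$ and (by the black/white symmetry of the statement) that $\Xi/d$ exists with a black center $i_0$. The key observation is that in $(\Xi/d)^+$ the element $(K_{i_0}/d)_*$ has multiplicity exactly $1$; consequently, in any valid further division $(\Xi/d)/e$ with $e>1$ the new center cannot be any element other than $(K_{i_0}/d)_*$, since a different center would require the multiplicity of the star to be divisible by $e$, forcing $e\mid 1$. With this forced choice, the existence condition for $(\Xi/d)/e$ reads $e\mid K_{i_0}/d$, $e\mid\lambda_i/d$ ($i\ne i_0$), $e\mid(\lambda_{i_0}-1)/d$, and $e\mid\mu_j/d$, which combined with the divisibilities defining $\Xi/d$ is precisely $de\mid K_{i_0}$, $de\mid\lambda_i$ ($i\ne i_0$), $de\mid\lambda_{i_0}-1$, $de\mid\mu_j$, i.e.\ the condition for $\Xi/(de)$ to exist with the same center $i_0$. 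Conversely, if $\Xi/(de)$ exists, then $d\mid de$ implies $\Xi/d$ exists, and the argument above then gives $(\Xi/d)/e$. A direct side-by-side comparison of the weights and multiplicities (after relabeling the transient star, as permitted by Definition~\ref{def:divided_passport}) shows $(\Xi/d)/e=\Xi/(de)$, and swapping the roles of $d$ and $e$ yields $(\Xi/e)/d=\Xi/(de)$ by the same reasoning.

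The main (and in truth the only) obstacle is bookkeeping with labels: one must track how the star label propagates, confirm that the star produced by the first division is the only possible center of the second, and use the legitimate relabeling permitted in Definition~\ref{def:divided_passport} so that the two sides of $(\Xi/d)/e=\Xi/(de)$ agree on the nose as labeled passports, not merely up to a reindexing of non-star labels.
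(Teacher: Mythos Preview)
Your proposal is correct and follows essentially the same approach as the paper's proof: uniqueness via the incompatibility of $d\mid(\lambda_{i_0}-1)$ and $d\mid\lambda_{i_0}$, existence as a restatement of the divisibility conditions, and the iterated division handled by observing that the star element (having multiplicity $1$) is forced to be the next center. If anything, you are more explicit than the paper in ruling out a simultaneous black/white center in part~(1) and in justifying why the star must be the second center in part~(3); the paper leaves both of these as implicit.
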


\begin{proof}
    First, for $d = 1$, $\Xi/1 = \Xi$ by definition. For $d > 1$, it is impossible to satisfy both $d | (\lambda_{i_0} - 1)$ and $ d | \lambda_{i_0}$. Then if $\Xi / d$ exists, $\Xi / d$ is unique.

    Second one is the direct inference of conditions \eqref{eq:divided_passport_exist_condition}.

    Third, the condition of $d = 1$ or $e = 1$ is trivial. We assume $d,e > 1$, and only prove the left equation. We donote the weight with star label in $\Xi/d$ as $(K/d)_*$. Due to the uniqueness of star label in $\Xi/d$, if we need to define $(\Xi/d)/e$, then $e|(K/d)$ i.e. $de|K$ must be satisfied, and $(\Xi/d)/e$ will have labeled weight $(K/de)_*$. The rest is to use \eqref{eq:def_divided_passport_pos} and \eqref{eq:def_divided_passport_neg}, and compare these equation of $(\Xi/d)/e$ and $\Xi/de$.

\end{proof}

The following theorem will give the relationship between trees with devided passport $\Xi/d$ and with $\Xi$.

\begin{theorem} \label{thm:Tree_Xi/d/e_eq_Tree_Xi/de}
    Let $\Xi$ be a passport, and $d , e \in \mathbb{Z}_+$. We Assume all the divided passports mentioned exist.
    \begin{enumerate}
        \item There is a bijection
        \[
        i_d : \Tree(\Xi / d) \to \coprod_{d|e}{\Tree(\Xi, e)} .
        \]
        \item $i_d |_{\Tree(\Xi / d, e)} : \Tree(\Xi / d, e) \to \Tree(\Xi, de)$ is a bijection.
        \item The equation holds
        \begin{equation} \label{eq:Tree_Xi/d/e_eq_Tree_Xi/de}
            \#\Tree(\Xi, de) = \#\Tree(\Xi / d, e) .
        \end{equation}
    \end{enumerate}
\end{theorem}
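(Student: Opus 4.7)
The plan is to build an explicit ``unfolding'' bijection $i_d$ by gluing $d$ rotated copies of a tree with passport $\Xi/d$ around its star-labeled vertex, and then track how the order of rotational symmetry transforms under this operation. \textbf{Construction of $i_d$.} Given $T \in \Tree(\Xi/d)$, let $v_*$ denote its unique star-labeled vertex (of weight $K_{i_0}/d$). I would form $i_d(T)$ by taking $d$ disjoint copies $T^{(0)}, \dots, T^{(d-1)}$ of $T$, identifying all copies of $v_*$ into one vertex $\tilde v$, and defining the cyclic order of edges at $\tilde v$ to be the concatenation of the cyclic orders from the copies in sequence; every other local cyclic order is inherited from its copy. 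I then assign $\tilde v$ the weight $K_{i_0}$ and the original non-star label $k_{i_0}$. The divisibility conditions \eqref{eq:divided_passport_exist_condition} guarantee that the resulting multiset of labeled weights equals $\Xi$, so $i_d(T) \in \Tree(\Xi)$, and the cyclic shift of copies $T^{(j)} \mapsto T^{(j+1 \bmod d)}$ is an automorphism of order divisible by $d$, placing $i_d(T)$ in $\coprod_{d\mid e} \Tree(\Xi,e)$.

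\textbf{Inverse.} For $T' \in \Tree(\Xi, e)$ with $d \mid e$, I would realize $T'$ as a truly $e$-rotationally symmetric embedded tree via Proposition \ref{prop:realize_symmetry}. The center of rotation is a vertex fixed by the order-$e$ automorphism; the asymmetric divisibility in \eqref{eq:divided_passport_exist_condition}—namely $d \mid \lambda_{i_0}-1$ while $d \mid \lambda_i$ for $i \ne i_0$, with analogous conditions on the white part—forces exactly one black vertex of weight $K_{i_0}$ to have trivial $\mathbb{Z}/d$-orbit, hence to be the center. Cutting the plane along $d$ rays from this center, placed between adjacent ``copies'', produces one fundamental sector; inside this sector I replace the central weight $K_{i_0}$ by $K_{i_0}/d$ and attach the star label, producing $j_d(T') \in \Tree(\Xi/d)$. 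A direct check shows $i_d$ and $j_d$ are mutual inverses up to the equivalence of Definition \ref{def:LWBP-tree_the_same}, which proves~(1).

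\textbf{Parts (2) and (3).} For $T \in \Tree(\Xi/d, e)$, any automorphism of $T$ must fix the unique star-labeled vertex $v_*$, so it acts as a rotational symmetry of order $e$ centered at $v_*$. Combinatorially, the cyclic order of edges at $v_*$ then has period $(\deg_T v_*)/e$; after gluing, the cyclic order at $\tilde v$ is $d$ concatenated copies of this same periodic block, so $i_d(T)$ has automorphism group $\mathbb{Z}/de\mathbb{Z}$. Running the same argument backward through $j_d$ gives the converse implication. Hence $i_d$ restricts to a bijection $\Tree(\Xi/d, e) \to \Tree(\Xi, de)$, and~(3) is then the cardinality statement obtained by passing to $\#$ on both sides.

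\textbf{Main obstacle.} The technical heart of the argument lies in the inverse construction: one must show that in any $e$-symmetric realization of $T'$, the center of rotation is forced to coincide with the vertex we wish to mark by a star, and that the resulting fundamental sector is well-defined up to ``the same'' equivalence, independent of the choice of cutting rays and of the particular embedding supplied by Proposition \ref{prop:realize_symmetry}. This is exactly where the asymmetric divisibility requirements of \eqref{eq:divided_passport_exist_condition}—one distinguished index $i_0$ with $d \mid \lambda_{i_0}-1$ and full divisibility $d \mid \lambda_i$ for the rest—play their essential role in singling out a canonical symmetric center.
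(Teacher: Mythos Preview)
Your approach is essentially identical to the paper's: both construct $i_d$ by gluing $d$ copies at the star-labeled vertex, build the inverse by realizing an $e$-symmetric tree rotationally via Proposition~\ref{prop:realize_symmetry} and extracting a fundamental sector, and deduce (2) and (3) by tracking how the symmetry order scales. Your write-up is in fact more careful than the paper's---you explicitly justify why the rotation center must carry the labeled weight $K_{i_0,k_{i_0}}$ using the asymmetric divisibility in \eqref{eq:divided_passport_exist_condition}, and you isolate the well-definedness of the fundamental sector as the main technical point, whereas the paper's proof simply asserts ``we assume every part is the same''.
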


\begin{proof}
    Let $\Xi = (\Xi^{+} = {K_{1,k_1}}^{\lambda_1} \dots {K_{u,k_u}}^{\lambda_u}, \Xi^{-} = {L_{1,l_1}}^{\mu_1} \dots {L_{v,l_v}}^{\mu_v})$ be a labeled passport. Assume $\Xi/d$ is with form of \eqref{eq:def_divided_passport_pos} and \eqref{eq:def_divided_passport_neg}.

    For Part (1), we need to define the map $i_d$ and its inverse map $i_d^{-1}$.

    Fix $T_d \in \Tree(\Xi/d)$, and its symmetric center labeled by star. Glue the symmetric centers of $d$ copies of $T_d$ to make a new plane tree, and the vertex glued by $d$ symmetric centers is with labeled weight $K_{i_0}$. This makes $i_d(T_d)$.

    Now fix $T \in \Tree(\Xi, e)$ with $d|e$. By Proposition \ref{prop:realize_symmetry}, we realize it as a labeled $e$-order rotational symmetric embedded tree in $\mathbb{R}^2$. Then we can divide its rotational symmetric center into $d$ parts. It makes tree $T$ divided into $d$ parts. We assume every part is the same. For one of the parts, we label the rotational symmetric center devided into $d$ parts with $(K/d)_*$. This makes $i_d^{-1}(T)$.

    It is easy to check $i_d^{-1}$ is exactly inverse map of $i_d$. Then $i_d$ is bijection.

    Part (2) is to say, for a tree with $de$-order rotational symmetry, when we divide it into $d$ same parts, one of the parts also holds $e$-order rotational symmetry.

    Part (3) is direct inference of Part (2).
\end{proof}

\begin{corollary} \label{cor:exist_Tree_Xi_d_then_exist_passport_Xi/d}
    Let $\Xi$ be a passport, and $d \in \Zpos$. If there exists a LWBP-tree in $\Tree(\Xi, d)$, then $\Xi/d$ exists.
\end{corollary}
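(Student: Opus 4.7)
The plan is to recover the divisibility conditions \eqref{eq:divided_passport_exist_condition} from the geometry of a tree $T \in \Tree(\Xi,d)$ by realizing it with a true rotational symmetry. The case $d=1$ is immediate since $\Xi/1$ is defined to equal $\Xi$, so I would assume $d \geq 2$ throughout.

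First, I would invoke Proposition \ref{prop:realize_symmetry} to embed $T$ in $\mathbb{R}^2$ as a labeled weighted plane tree that is genuinely invariant under a rotation $\rho$ of order $d$. Such a rotation has a unique fixed point $c \in \mathbb{R}^2$, and the next step is to show that $c$ must be a vertex of $T$. If $c$ lay in the interior of an edge $e$, then $\rho(e)=e$; since $d \geq 2$, $\rho$ cannot fix $e$ pointwise, so it would reverse $e$, swapping its two endpoints. But those endpoints are oppositely colored, while $\rho$ is an automorphism of the LWBP-tree and hence respects the decomposition $V = V^+ \sqcup V^-$ (together with $\Lab^\pm$), a contradiction. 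Excluding this edge-midpoint case is the main technical point; it relies crucially on the bi-coloring.

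Without loss of generality assume $c \in V^+$ with $\Lab^+(c) = K_{i_0,k_{i_0}}$; the case $c \in V^-$ is symmetric and leads to the second branch of \eqref{eq:def_g_i}. I would then analyze the action of $\rho$ in two places. At the center $c$, the rotation $\rho$ permutes the incident edges freely in orbits of size $d$, and edges in a common orbit carry equal weight (because $\rho$ preserves $\mathcal{W}$); summing over orbits gives
\[
d \;\Big|\; \sum_{e \in E(c)} \mathcal{W}(e) \;=\; \wt(c) \;=\; K_{i_0},
\]
where the last equality uses the compatibility condition \eqref{eq:condition_weight_compatibility}. Away from the center, $\rho$ acts freely on $V \setminus \{c\}$ (since $c$ is the unique fixed point of $\rho$), and each orbit consists of vertices with identical labeled weight, because automorphisms preserve $\Lab^\pm$. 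Counting vertices by labeled weight then yields $d \mid (\lambda_{i_0}-1)$ (the center being the unique vertex of labeled weight $K_{i_0,k_{i_0}}$ that is not in a free orbit), $d \mid \lambda_i$ for $i \neq i_0$, and $d \mid \mu_j$ for all $j$.

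These are precisely the conditions \eqref{eq:divided_passport_exist_condition}, so the divided passport $\Xi/d$ exists with symmetric center of labeled weight $(K_{i_0}/d)_*$. The only subtle step to execute carefully is the edge-midpoint exclusion in the second paragraph; after that, everything reduces to a direct orbit count for a free action of a finite cyclic group on the non-central vertices.
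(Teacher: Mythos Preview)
Your proposal is correct and follows essentially the same geometric idea as the paper: realize $T$ rotationally via Proposition~\ref{prop:realize_symmetry}, locate the rotation center, and read off the divisibility from the cyclic action. The paper's proof is a one-line appeal to the construction of $i_d^{-1}$ in Theorem~\ref{thm:Tree_Xi/d/e_eq_Tree_Xi/de} (cutting the tree into $d$ identical pieces whose passport is then $\Xi/d$), whereas you instead verify the defining conditions~\eqref{eq:divided_passport_exist_condition} directly by orbit-counting; your explicit exclusion of the edge-midpoint case via the bi-coloring is a detail the paper leaves implicit.
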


\begin{proof}
    For $T \in \Tree(\Xi, d)$, we can devide it into $d$ same parts using the method when we define $i_d^{-1}$ in the proof of Theorem \ref{thm:Tree_Xi/d/e_eq_Tree_Xi/de}. Then one of the parts is with passport $\Xi/d$, thus $\Xi/d$ exists.
\end{proof}

\begin{example}
    See Figure \ref{fig:def_i_d}. Assume $\Xi = (3^21^2, 42^2)$ and $\Xi/2 = (3\,1, 2_*2)$. Subfigure \ref{sub@subfig:def_i_d_A} gives the example of constructing of $i_2 : \Tree(\Xi/2) \to \Tree(\Xi, 2) = \coprod_{2|e}{\Tree(\Xi, e)}$. Subfigure \ref{sub@subfig:def_i_d_B} gives another example when star label is on another vertex. If we do not put the star label on $\Xi/2$, we cannot distinguish the two ways to glue vertices, and $i_d$ cannot be defined. Moreover, without star label, we will have $\#\Tree(3\,1, 2^2) = 1$ instead of $\#\Tree(3\,1, 2_*2) = 2$, and fail to know $\#\Tree(\Xi, 2) = 2$.
\end{example}

\begin{figure}[htbp]
    \centering
    \begin{subfigure}{0.99\textwidth}
        \centering
        \begin{tikzpicture}
            \begin{scope} [shift = {(-4, -1.5)}, xscale = 0.8, yscale = 0.8]
                \node[wht] (2D) at (0, 1) {$2_*$};
            \node[blk] (1) at (1, 2) {1};
            \node[blk] (3) at (-1, 2) {3};
            \node[wht] (2U) at (-2, 3) {2};
            \draw (2D) -- (1) node[mid_auto] {1};
            \draw (2D) -- (3) node[mid_auto] {1};
            \draw (3) -- (2U) node[mid_auto] {2};
            \end{scope}
            
            \draw [->] (-2.2, 0) -- (-1.2, 0) node[mid_auto] {copy};

            \begin{scope} [shift = {(0, 0)}, xscale = 0.5, yscale = 0.5]
                \node[wht] (2D) at (0, 1) {};
                \node (2D') at (0, 0.9) {*};
                \node[blk] (1) at (1, 2) {};
                \node[blk] (3) at (-1, 2) {};
                \node[wht] (2U) at (-2, 3) {};
                \draw (2D) -- (1) node[mid_auto] {1};
                \draw (2D) -- (3) node[mid_auto] {1};
                \draw (3) -- (2U) node[mid_auto] {2};
            \end{scope}

            \draw [<->, color = red] (0, -0.2) -- (0, 0.2);

            \begin{scope} [shift = {(0, 0)}, xscale = -0.5, yscale = -0.5]
                \node[wht] (2D) at (0, 1) {};
                \node (2D') at (0, 1.1) {*};
                \node[blk] (1) at (1, 2) {};
                \node[blk] (3) at (-1, 2) {};
                \node[wht] (2U) at (-2, 3) {};
                \draw (2D) -- (1) node[mid_auto] {1};
                \draw (2D) -- (3) node[mid_auto] {1};
                \draw (3) -- (2U) node[mid_auto] {2};
            \end{scope}

            \draw [->] (1, 0) -- (2, 0) node[mid_auto] {glue};

            \node[wht] (M) at (4, 0) {4};

            \begin{scope} [shift = {(4, -0.8)}, xscale = 0.8, yscale = 0.8]
                \node[blk] (1) at (1, 2) {1};
                \node[blk] (3) at (-1, 2) {3};
                \node[wht] (2U) at (-2, 3) {2};
                \draw (M) -- (1) node[mid_auto] {1};
                \draw (M) -- (3) node[mid_auto] {1};
                \draw (3) -- (2U) node[mid_auto] {2};
            \end{scope}

            \draw [dashed, color = red] (2.4, 0) -- (5.6, 0);

            \begin{scope} [shift = {(4, 0.8)}, xscale = -0.8, yscale = -0.8]
                \node[blk] (1) at (1, 2) {1};
                \node[blk] (3) at (-1, 2) {3};
                \node[wht] (2U) at (-2, 3) {2};
                \draw (M) -- (1) node[mid_auto] {1};
                \draw (M) -- (3) node[mid_auto] {1};
                \draw (3) -- (2U) node[mid_auto] {2};
            \end{scope}

            \draw [->] (4, -1.5) -- (4, -2.5) -- (-4, -2.5) -- (-4, -1.5);
            \node (Div) at (0, -2.15) {divide};
        \end{tikzpicture}
        \caption{$\Xi = (3^21^2, 42^2)$ and $\Xi/2 = (3\,1, 2_*2)$. The subfigure gives an example of constructing of $i_2 : \Tree(\Xi/2) \to \Tree(\Xi, 2) = \coprod_{2|e}{\Tree(\Xi, e)}$.}
        \label{subfig:def_i_d_A}
    \end{subfigure}
    \vspace{0.2cm}
    \centering
    \begin{subfigure}{0.99\textwidth}
        \centering
        \begin{tikzpicture}
            \begin{scope} [shift = {(-4, -1.5)}, xscale = 0.8, yscale = 0.8]
                \node[wht] (2D) at (0, 1) {2};
            \node[blk] (1) at (1, 2) {1};
            \node[blk] (3) at (-1, 2) {3};
            \node[wht] (2U) at (-2, 3) {$2_*$};
            \draw (2D) -- (1) node[mid_auto] {1};
            \draw (2D) -- (3) node[mid_auto] {1};
            \draw (3) -- (2U) node[mid_auto] {2};
            \end{scope}

            \draw [->] (-2.2, 0) -- (-1.2, 0) node[mid_auto] {$i_2$};

            \node[wht] (M) at (2, 0) {4};

            \begin{scope} [shift = {(3.6, -2.4)}, xscale = 0.8, yscale = 0.8]
                \node[blk] (1) at (1, 2) {1};
                \node[blk] (3) at (-1, 2) {3};
                \node[wht] (2D) at (0, 1) {2};
                \draw (2D) -- (1) node[mid_auto] {1};
                \draw (2D) -- (3) node[mid_auto] {1};
                \draw (3) -- (M) node[mid_auto] {2};
            \end{scope}

            \draw [dashed, color = red] (0.4, 0) -- (3.6, 0);

            \begin{scope} [shift = {(0.4, 2.4)}, xscale = -0.8, yscale = -0.8]
                \node[blk] (1) at (1, 2) {1};
                \node[blk] (3) at (-1, 2) {3};
                \node[wht] (2D) at (0, 1) {2};
                \draw (2D) -- (1) node[mid_auto] {1};
                \draw (2D) -- (3) node[mid_auto] {1};
                \draw (3) -- (M) node[mid_auto] {2};
            \end{scope}
        \end{tikzpicture}
        \caption{another example of $i_2$ when star label is on another vertex.}
        \label{subfig:def_i_d_B}
    \end{subfigure}
    \caption{The definition of $i_d$}
    \label{fig:def_i_d}
\end{figure}
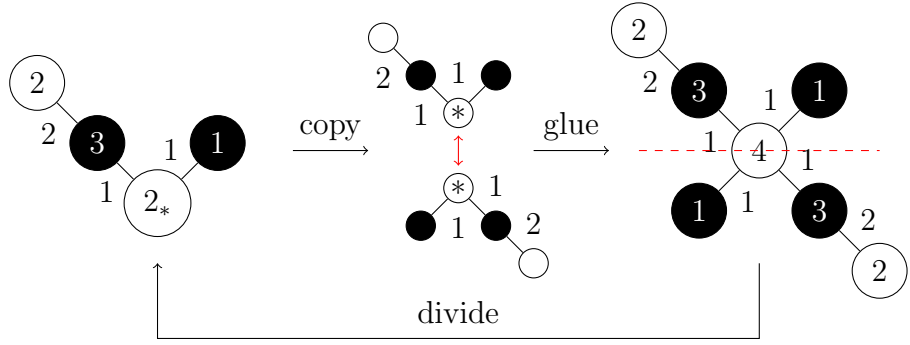
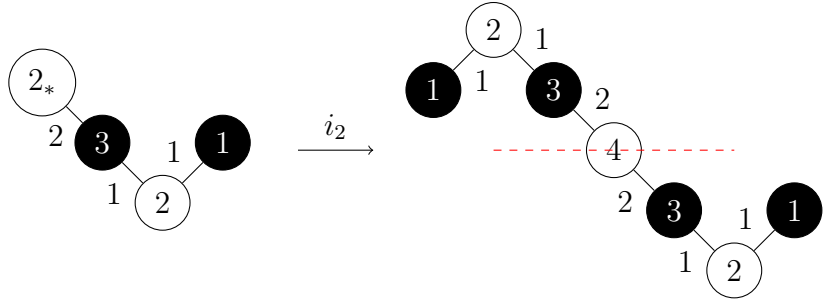

\subsection{The number of LWBP-tree with given labeled passport} \label{subsec:formula}
We now give the proof of formula in Theorem \ref{thm:number_of_Tree_Xi}. The formula only needs the value of $\#\FTree(\Xi/d)$, which can compute using Theorem \ref{thm:passport_simple}. We will restate the definition of $g_i$ in \eqref{eq:def_g_i} and state the definition of $G(d)$, which are used in Theorem \ref{thm:number_of_Tree_Xi}.

\begin{definition} \label{def:g_i_and_g_d}
    Let $\Xi = (\Xi^{+} = {K_{1,k_1}}^{\lambda_1} \dots {K_{u,k_u}}^{\lambda_u}, \Xi^{-} = {L_{1,l_1}}^{\mu_1} \dots {L_{v,l_v}}^{\mu_v})$ be a labeled passport. Assume $d \in \mathbb{Z}_{>1}$. Define
    \begin{equation*}
        g_i := \\
        \begin{cases}
            \gcd(K_i; \lambda_1, \dots, \lambda_{i-1}, \lambda_i-1, \lambda_{i+1}, \dots, \lambda_u; \mu_1, \dots, \mu_v) \\
            \text{ if } 1 \leq i \leq u \\
            \gcd(L_{i-u}; \lambda_1, \dots, \lambda_u; \mu_1, \dots, \mu_{i-u-1}, \mu_{i-u}-1, \mu_{i-u+1}, \dots, \mu_v) \\
            \text{ if } u+1 \leq i \leq u+v \\
        \end{cases} ,
    \end{equation*}
    \begin{equation} \label{eq:def_g_d}
        G(d) := \frac{\#\FTree(\Xi / d)}{d p(\Xi / d)} .
    \end{equation}
\end{definition}

\begin{proof}[Proof of Theorem \ref{thm:number_of_Tree_Xi}]
    We restate the formula in \eqref{eq:number_of_Tree_Xi} as
    \begin{equation} \label{eq:number_of_Tree_Xi_main}
        \#\Tree(\Xi) = G(1) + \sum_{i=1}^{u+v} {\sum_{1<d|g_i} \varphi(d) G(d)} ,
    \end{equation}

    By \eqref{eq:number_Tree_eq_sum_number_Tree_symmetry}, we shall compute $\#\Tree(\Xi, e)$. Define
    \begin{equation} \label{eq:def_D}
        D := \{d \in \Zpos \,|\, \exists i_0 \in \{1 , \dots, u+v\} \text{ s.t. } d|g_{i_0}\} .
    \end{equation}
    By Corollary \ref{cor:exist_Tree_Xi_d_then_exist_passport_Xi/d} and Proposition \ref{prop:basic_of_divided_passport}, $\#\Tree(\Xi, d) \ne 0$ implies $d \in D$, thus we only need to compute $\#\Tree(\Xi, d)$ when $d \in D$.

    By \eqref{eq:number_Tree_eq_sum_number_Tree_symmetry}, \eqref{eq:p_Tree_eq_e_FTree} and \eqref{eq:Tree_Xi/d/e_eq_Tree_Xi/de}, we get 
    \begin{equation} \label{eq:fg_dual_1}
        \begin{aligned}
            \#\FTree(\Xi/d) = \sum_{e=1}^{+\infty}{\#\FTree(\Xi/d, e)} = \\ \sum_{e=1}^{+\infty}{\frac{p(\Xi/d) \#\Tree(\Xi/d, e)}{e}} = \sum_{e=1}^{+\infty}{\frac{p(\Xi/d) \#\Tree(\Xi, de)}{e}} ,
        \end{aligned}
    \end{equation}
    if $\Xi/d$ exists. By Theorem \ref{thm:passport_simple}, we can compute $\#\FTree(\Xi/d)$. By Proposition \ref{prop:basic_of_divided_passport}, $\Xi/d$ exists if and only if $d \in D$. Therefore, \eqref{eq:fg_dual_1} forms a system of $\#D$ linear equations in $\#D$ variables, where $\#\Tree(\Xi, d)$ are the unknowns.

    We define
    \begin{equation} \label{eq:def_f_d}
        F(d) := \frac{\#\Tree(\Xi, d)}{d} .
    \end{equation}
    By definition of $G(d)$ in \eqref{eq:def_g_d}, we change \eqref{eq:fg_dual_1} to
    \begin{equation} \label{eq:fg_dual_2}
        \sum_{d|e} F(e) = \sum_{e=1}^{+\infty} F(de) = G(d) .
    \end{equation}
    We claim that
    \begin{equation} \label{eq:fg_dual_3}
        \sum_{1<d|g_i} (d-1) F(d) = \sum_{1<d|g_i} \varphi(d) G(d) .
    \end{equation}
    The proof is
    \[
        LHS = \sum_{1<d|g_i} {(\sum_{1<e|d} \varphi(e)) F(d)} = \sum_{1<e|g_i} {\varphi(e) \sum_{e|d|g_i} F(d)} = \\
        \sum_{1<e|g_i} \varphi(e) {G(e)} = RHS .
    \]
    Using \eqref{eq:fg_dual_2} and \eqref{eq:fg_dual_3}, we obtain \eqref{eq:number_of_Tree_Xi_main} :
    \begin{equation*}
        \begin{gathered}
            \#\Tree(\Xi) = \sum_{e=1}^{+\infty} \#\Tree(\Xi, e) = \sum_{e=1}^{+\infty} d F(d) = \\
            \sum_{e=1}^{+\infty} F(d) + \sum_{i=1}^{u+v} {\sum_{1<d|g_i} (d-1) F(d)} = G(1) + \sum_{i=1}^{u+v} {\sum_{1<d|g_i} \varphi(d) G(d)} .
        \end{gathered}
    \end{equation*}
\end{proof}

\begin{corollary} \label{cor:number_of_Tree_Xi_d}
    Let $\Xi$ be a labeled passport and $d \in \Zpos$. Then
    \begin{equation} \label{eq:number_of_Tree_Xi_d}
        \#\Tree(\Xi, d) = d \sum_{d|e} \mu(\frac{e}{d}) G(e) ,
    \end{equation}
    where $\mu(n)$ is Möbius function.
\end{corollary}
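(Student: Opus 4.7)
The plan is to apply Möbius inversion on the divisibility poset to the key identity \eqref{eq:fg_dual_2} established in the proof of Theorem \ref{thm:number_of_Tree_Xi}. Recall that with $F(d) := \#\Tree(\Xi, d)/d$ the identity reads
\[
G(d) = \sum_{d \mid e} F(e),
\]
holding whenever $\Xi/d$ exists (and, by convention, for all $d$ once we extend $G$ and $F$ by zero at passports that do not exist).

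First I would verify that the sums involved are genuinely finite, so that Möbius inversion applies without analytic subtleties. By Corollary \ref{cor:exist_Tree_Xi_d_then_exist_passport_Xi/d}, $\#\Tree(\Xi,e) \neq 0$ forces $\Xi/e$ to exist, which by Proposition \ref{prop:basic_of_divided_passport} requires $e \mid g_{i_0}$ for some $1 \leq i_0 \leq u+v$. Hence $F(e) = 0$ outside the finite set $D$ of \eqref{eq:def_D}, and the sum $\sum_{d \mid e} F(e)$ truncates after finitely many terms. The same finiteness justifies the right-hand side of the claimed formula.

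Next I would invoke the standard Möbius inversion over the divisibility order in its ``sum over multiples'' form: if $G(d) = \sum_{d \mid e} F(e)$ with $F$ of finite support, then
\[
F(d) = \sum_{d \mid e} \mu\!\left(\frac{e}{d}\right) G(e).
\]
This is the dual version of the usual identity and follows by the same short computation: substituting and swapping the order of summation, one obtains $\sum_{d \mid e} \mu(e/d) G(e) = \sum_{d \mid m} F(m) \sum_{d \mid e \mid m} \mu(e/d) = F(d)$, since $\sum_{k \mid n} \mu(k) = [n=1]$.

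Finally, multiplying both sides by $d$ and using the definition $F(d) = \#\Tree(\Xi,d)/d$ yields \eqref{eq:number_of_Tree_Xi_d}. There is essentially no hard step; the only thing to be careful about is the finiteness argument above, which is needed to make the reverse Möbius inversion meaningful, and the convention that $G(e) = 0$ when $\Xi/e$ does not exist so that the formula makes sense uniformly in $d$.
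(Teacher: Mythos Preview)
Your proposal is correct and follows exactly the paper's approach: the paper's proof simply cites \eqref{eq:def_f_d}, \eqref{eq:fg_dual_2}, and the M\"obius inversion formula to obtain $\#\Tree(\Xi,d) = dF(d) = d\sum_{d\mid e}\mu(e/d)G(e)$ in a single line. Your version merely spells out the finiteness check and the dual form of M\"obius inversion in more detail.
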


\begin{proof}
    By \eqref{eq:def_f_d}, \eqref{eq:fg_dual_2} and Möbius inversion formula,
    \[
        \#\Tree(\Xi, d) = dF(d) = d \sum_{d|e} \mu(\frac{e}{d}) G(e) .
    \]
\end{proof}

\begin{example}
    Assume $\Xi = (2^24^3, 8^2)$. Then $g_1 = g_3 = 1$ and $g_2 = 2$, i.e. $\Xi/d$ exists only when $d = 1$ or $2$, and $\Xi/2 = (2\,4\,2_*,8)$. The rest is to compute $G(1)$ and $G(2)$.

    It is easy to see that 
    \[
        p(\Xi) = 2!3!2! = 24 ,\quad p(\Xi/2) = 1!1!1!1! = 1 .
    \]

    Because $\Fill(\Xi/2) = (2_14_12_{*,1}, 8_1)$ is simple and nondecomposable, by \eqref{eq:number_Tree_simple_nondecomposable}
    \[
        \#\FTree(\Xi/2) = (4-2)! = 2 ,\quad G(2) = \frac{\#\FTree(\Xi/2)}{2p(\Xi/2)} = 1 .
    \]

    But $\Fill(\Xi) = (2_12_24_14_24_3, 8_18_2)$ is decomposable, and there will be $6$ $2$-partitions 
    \begin{equation*}
        \begin{gathered}
            (2_12_24_1, 8_1) \sqcup (4_24_3, 8_2),\quad (2_12_24_2, 8_1) \sqcup (4_14_3, 8_2),\quad (2_12_24_3, 8_1) \sqcup (4_14_2, 8_2), \\ (2_12_24_1, 8_2) \sqcup (4_24_3, 8_1),\quad (2_12_24_2, 8_2) \sqcup (4_14_3, 8_1),\quad (2_12_24_3, 8_2) \sqcup (4_14_2, 8_1) .
        \end{gathered}
    \end{equation*}

    Then

    \[
        \#\FTree(\Xi) = (7-2)! - 6(4-1)!(3-1)! = 48 ,\quad G(1) = \frac{\#\FTree(\Xi)}{p(\Xi)} = 2 .
    \]

    Then
    \[
        \#Tree(\Xi, 1) = \mu(\frac{1}{1}) G(1) + \mu(\frac{2}{1}) G(2) = 1 ,\quad \#Tree(\Xi, 3) = 2 \mu(\frac{2}{2}) G(2) = 2 .
    \]
    
    All the trees in $\Tree(\Xi)$ are in Figure \ref{fig:tree_2^24^38^2}.
\end{example}

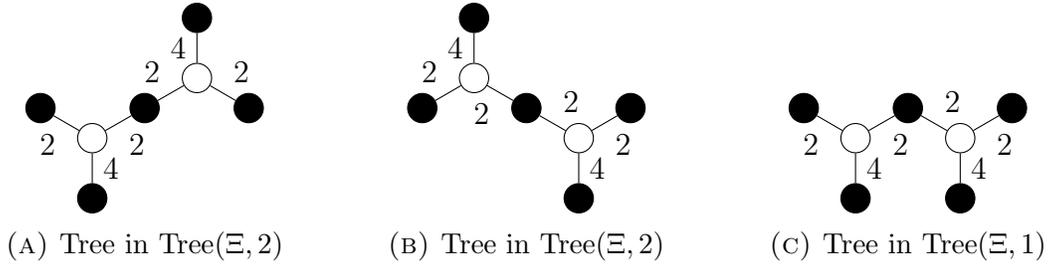
\begin{figure}[htbp]
    \centering
    \begin{subfigure}{0.3\textwidth}
        \centering
        \begin{tikzpicture}
            \node[blk] (M) at (0, 0) {};
            \begin{scope}[rotate = 0, xscale = 0.8, yscale = 0.8]
                \node[wht] (8) at (0.87, 0.5) {};
                \node[blk] (2) at (1.73, 0) {};
                \node[blk] (4) at (0.87, 1.5) {};
                \draw (M) -- (8) node[mid_auto] {2};
                \draw (8) -- (2) node[mid_auto] {2};
                \draw (8) -- (4) node[mid_auto] {4};
            \end{scope}
            \begin{scope}[rotate = 180, xscale = 0.8, yscale = 0.8]
                \node[wht] (8) at (0.87, 0.5) {};
                \node[blk] (2) at (1.73, 0) {};
                \node[blk] (4) at (0.87, 1.5) {};
                \draw (M) -- (8) node[mid_auto] {2};
                \draw (8) -- (2) node[mid_auto] {2};
                \draw (8) -- (4) node[mid_auto] {4};
            \end{scope}
        \end{tikzpicture}
        \caption{Tree in $\Tree(\Xi, 2)$}
        \label{subfig:tree_2^24^38^2_A}
    \end{subfigure}
    \hfill
    \centering
    \begin{subfigure}{0.3\textwidth}
        \centering
        \begin{tikzpicture}
            \node[blk] (M) at (0, 0) {};
            \begin{scope}[rotate = 0, xscale = -0.8, yscale = 0.8]
                \node[wht] (8) at (0.87, 0.5) {};
                \node[blk] (2) at (1.73, 0) {};
                \node[blk] (4) at (0.87, 1.5) {};
                \draw (M) -- (8) node[mid_auto] {2};
                \draw (8) -- (2) node[mid_auto, swap] {2};
                \draw (8) -- (4) node[mid_auto] {4};
            \end{scope}
            \begin{scope}[rotate = 180, xscale = -0.8, yscale = 0.8]
                \node[wht] (8) at (0.87, 0.5) {};
                \node[blk] (2) at (1.73, 0) {};
                \node[blk] (4) at (0.87, 1.5) {};
                \draw (M) -- (8) node[mid_auto] {2};
                \draw (8) -- (2) node[mid_auto, swap] {2};
                \draw (8) -- (4) node[mid_auto] {4};
            \end{scope}
        \end{tikzpicture}
        \caption{Tree in $\Tree(\Xi, 2)$}
        \label{subfig:tree_2^24^38^2_B}
    \end{subfigure}
    \hfill
    \centering
    \begin{subfigure}{0.3\textwidth}
        \centering
        \begin{tikzpicture}
            \node[blk] (M) at (0, 0) {};
            \begin{scope}[rotate = 0, xscale = -0.8, yscale = -0.8]
                \node[wht] (8) at (0.87, 0.5) {};
                \node[blk] (2) at (1.73, 0) {};
                \node[blk] (4) at (0.87, 1.5) {};
                \draw (M) -- (8) node[mid_auto] {2};
                \draw (8) -- (2) node[mid_auto] {2};
                \draw (8) -- (4) node[mid_auto] {4};
            \end{scope}
            \begin{scope}[rotate = 180, , xscale = -0.8, yscale = 0.8]
                \node[wht] (8) at (0.87, 0.5) {};
                \node[blk] (2) at (1.73, 0) {};
                \node[blk] (4) at (0.87, 1.5) {};
                \draw (M) -- (8) node[mid_auto] {2};
                \draw (8) -- (2) node[mid_auto, swap] {2};
                \draw (8) -- (4) node[mid_auto] {4};
            \end{scope}
        \end{tikzpicture}
        \caption{Tree in $\Tree(\Xi, 1)$}
        \label{subfig:tree_2^24^38^2_C}
    \end{subfigure}
    \caption{Trees in $\Tree(\Xi)$, where $\Xi = (2^24^3, 8^2)$}
    \label{fig:tree_2^24^38^2}
\end{figure}

\section{Counting the components of \texorpdfstring{$\mathcal{M}hcmu_{0,1}(\alpha)$} {\mathcal{M}hcmu{0,1}(α)}} \label{sec:explicit_formula}

We have transformed the problem of counting the components of $\mathcal{M}hcmu_{0,1}(\alpha)$ into enumeration of WBP-trees with passport
\begin{equation} \label{eq:def_passport_with_pq}
    \Xi = (p^q,q^p)
\end{equation}
by the discussion in Section \ref{subsec:from_geometric_to_WBP-tree}. We will fix the passport $\Xi$ in following discussion, and use the whole section to prove Theorem \ref{thm:main}.

\subsection{The easiest items} \label{subsec:easiest_item}

By Theorem \ref{thm:number_of_Tree_Xi}, we should only know $g_i$ and $G(d)$. It is easy to see the value of $g_i$. By definition of $G(d)$ in \eqref{eq:def_g_d}, the value of $p(\Xi/d)$ is also easy to compute, though $\#\FTree(\Xi/d)$ is not. We conclude the following proposition.
\begin{proposition} \label{prop:easiest_item}
    Let $p>q \geq 1$ are integers, passport $\Xi$ defined in \eqref{eq:def_passport_with_pq}.
    \begin{enumerate}
        \item The value of $g_i$ is the same as \eqref{eq:def_g_012}
        \begin{equation*}
            g_1 = \gcd(p-1, q) ,\quad g_2 = \gcd(p,q-1) . 
        \end{equation*}
        \item By Theorem \ref{thm:number_of_Tree_Xi}, we have the formula the same as \eqref{eq:Tree_Xi_with_pq}
        \begin{equation*}
            \#\Tree(\Xi) = G(1) + \sum_{1<d|g_1} \varphi(d) G(d) + \sum_{1<d|g_2} \varphi(d) G(d) .
        \end{equation*}
        \item The value of $p(\Xi/d)$ is as follow
        \begin{equation} \label{eq:p(Xi/d)}
            p(\Xi / d) =
            \begin{cases} 
                p!q! & \text{if } d=1 \\ 
                (\frac{p-1}{d})!(\frac{q}{d})! & \text{if } 1<d|g_1 \\ 
                (\frac{p}{d})!(\frac{p-1}{d})! & \text{if } 1<d|g_2 
            \end{cases}
        \end{equation}
    \end{enumerate}
\end{proposition}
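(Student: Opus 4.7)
The plan is to specialize the general combinatorial machinery developed in Sections~\ref{sec:LWBP-tree} and \ref{sec:divided_passport_and_formula} to the particular passport $\Xi = (q^p, p^q)$ arising from our moduli problem. Since this passport has only one distinct weight on each side --- namely a single labeled weight $K_1 = q$ with multiplicity $\lambda_1 = p$ on the black side, and a single labeled weight $L_1 = p$ with multiplicity $\mu_1 = q$ on the white side --- every quantity appearing in the statement reduces to a direct substitution into the formulas from Definition~\ref{def:g_i_and_g_d}, Definition~\ref{def:divided_passport}, and equation \eqref{eq:def_p_passport}.

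For Part (1), I would apply Definition~\ref{def:g_i_and_g_d} with $u = v = 1$. Taking $i = 1$ gives $g_1 = \gcd(K_1;\,\lambda_1 - 1;\,\mu_1) = \gcd(q,\,p-1,\,q) = \gcd(p-1,q)$, and taking $i = 2 = u+1$ gives $g_2 = \gcd(L_1;\,\lambda_1;\,\mu_1 - 1) = \gcd(p,\,p,\,q-1) = \gcd(p,q-1)$. Part (2) is then an immediate specialization of Theorem~\ref{thm:number_of_Tree_Xi}, since the outer sum $\sum_{i=1}^{u+v}$ collapses to the two terms $i = 1$ and $i = 2$, and the two inner sums are precisely the two displayed sums over $d | g_1$ and $d | g_2$.

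For Part (3), the case $d = 1$ is trivial because $\Xi/1 = \Xi$, and \eqref{eq:def_p_passport} gives $p(\Xi) = \lambda_1!\,\mu_1! = p!\,q!$. For $1 < d\,|\,g_1$, the divisibility pattern forces the symmetric center to lie on the black side, so Definition~\ref{def:divided_passport} yields $(\Xi/d)^+ = (q/d)_{\ast}\, q^{(p-1)/d}$ and $(\Xi/d)^- = p^{q/d}$; applying \eqref{eq:def_p_passport} then produces $p(\Xi/d) = 1!\cdot ((p-1)/d)!\cdot (q/d)!$, which matches the stated formula once one discards the trivial $1!$ factor coming from the star-labeled vertex. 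The case $1 < d\,|\,g_2$ is handled by the mirror argument on the white side, where the star label is attached to the divided weight $p/d$.

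The only subtlety worth checking is that the divisibility conditions in \eqref{eq:divided_passport_exist_condition} are automatically met whenever $d\,|\,g_1$ or $d\,|\,g_2$; this is tautological because the $g_i$ in Definition~\ref{def:g_i_and_g_d} were constructed as the greatest common divisor of exactly those quantities. Consequently there is no genuine obstacle in this proposition --- it is a bookkeeping consolidation that records which inputs to Theorem~\ref{thm:number_of_Tree_Xi} are elementary for our passport, isolating the remaining non-trivial task (computing $\#\FTree(\Xi/d)$ via Theorem~\ref{thm:passport_simple}) to be addressed in the subsequent subsections.
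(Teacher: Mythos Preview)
Your proposal is correct and matches the paper's approach: the paper offers no explicit proof of this proposition, treating it as an immediate specialization (``it is easy to see the value of $g_i$ \ldots the value of $p(\Xi/d)$ is also easy to compute''), and you have simply supplied the routine substitutions into Definition~\ref{def:g_i_and_g_d}, Definition~\ref{def:divided_passport}, and \eqref{eq:def_p_passport}. Note incidentally that your mirror computation for the case $1<d\mid g_2$ yields $p(\Xi/d) = (p/d)!\,((q-1)/d)!$, which is the intended value; the $(\frac{p-1}{d})!$ appearing in the third line of \eqref{eq:p(Xi/d)} is a typographical slip in the paper.
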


$\#\FTree(\Xi/d)$ is hard to compute because $\Fill(\Xi/d)$ may be decomposable. Luckily, $\Fill(\Xi/d)$ are enough easy, and the partitions of them are related to $g_0 := \gcd(p,q)$, defined in \eqref{eq:def_g_012}.

We first turn to the condition when all $\Fill(\Xi/d)$ are nondecomposable, where we can use the simpler formula in \eqref{eq:number_Tree_simple_nondecomposable}. This is equivalent to $g_0 = 1$.
\begin{proposition} \label{prop:g_eq_1}
    Let $p>q \geq 1$ are integers, passport $\Xi$ defined in \eqref{eq:def_passport_with_pq}. Assume $g_0 = 1$ i.e. $\Fill(\Xi/d)$ are all nondecomposable. Then
    \begin{equation} \label{eq:FTree_Xi_d_when_g_eq_1}
        \#\FTree(\Xi/d) = (\frac{p+q-1}{d} -1)!
    \end{equation}
    \begin{equation} \label{eq:G(d)_when_g_eq_1}
        G(d) = 
        \begin{cases}
            \frac{(p+q-2)!}{p!q!} & \text{if } d=1 \\ 
            \frac{1}{p+q-1} \binom{\frac{p+q-1}{d}}{\frac{q}{d}} & \text{if } 1<d|g_1 \\ 
            \frac{1}{p+q-1} \binom{\frac{p+q-1}{d}}{\frac{p}{d}} & \text{if } 1<d|g_2 
        \end{cases}
    \end{equation}
    \begin{equation} \label{eq:Tree_Xi_when_g_eq_1}
        \#\Tree(\Xi) = \frac{(p+q-2)!}{p!q!} + \frac{1}{p+q-1} \sum_{1<d|g_1} \varphi(d) \binom{\frac{p+q-1}{d}}{\frac{q}{d}} + \sum_{1<d|g_2} \varphi(d) \binom{\frac{p+q-1}{d}}{\frac{p}{d}}
    \end{equation}
\end{proposition}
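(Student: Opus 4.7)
My strategy is to reduce everything to Theorem~\ref{thm:passport_simple}(1) by verifying that each filled passport $\Fill(\Xi/d)$ appearing in \eqref{eq:Tree_Xi_with_pq} is simple and nondecomposable. Once nondecomposability is secured, the closed form $\#\FTree(\Xi/d) = (\#(\Xi/d) - 2)!$ is immediate, and all the remaining assertions reduce to bookkeeping with factorials.

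The heart of the argument is verifying nondecomposability. For a hypothetical nontrivial $n$-partition ($n \geq 2$) of the base passport $\Xi = (p^q, q^p)$, let a given part contain $a_i$ weight-$p$ black vertices and $b_i$ weight-$q$ white vertices. The balance condition \eqref{eq:condition_partition_sum_weight} reads $a_i p = b_i q$; since $g_0 = \gcd(p,q) = 1$, this forces $a_i = c_i q$ and $b_i = c_i p$ for some $c_i \in \mathbb{Z}_{\geq 0}$, and summing gives $\sum c_i = 1$, so only one part is nonempty, a contradiction. When $1 < d \mid g_2$, the passport $\Xi/d$ contains a starred black vertex of weight $p/d$ together with $(q-1)/d$ weight-$p$ blacks and $p/d$ weight-$q$ whites; in any part not containing the star, the same balance argument forces $a \geq q$ if the part is nonempty, but the total non-star black count is $(q-1)/d < q$, so every non-star part must be empty. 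The case $1 < d \mid g_1$ is the mirror image, with the star on the white side and the roles of $p,q$ swapped.

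With nondecomposability established, I count vertices: $\#(\Xi/d) = 1 + (p+q-1)/d$ for $d > 1$ (the starred center plus the remainder) and $\#\Xi = p+q$ for $d = 1$. Theorem~\ref{thm:passport_simple}(1) then gives \eqref{eq:FTree_Xi_d_when_g_eq_1} uniformly, reproducing $(p+q-2)!$ in the $d = 1$ case. Plugging these into \eqref{eq:def_g_d} together with the values of $p(\Xi/d)$ from \eqref{eq:p(Xi/d)}, and using the arithmetic identity $d \cdot (p+q-1)/d = p+q-1$ to convert the factorial ratio into a binomial coefficient divided by $p+q-1$, yields \eqref{eq:G(d)_when_g_eq_1}. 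Finally, substituting these $G(d)$ values into the master formula \eqref{eq:Tree_Xi_with_pq} from Proposition~\ref{prop:easiest_item}(2) produces \eqref{eq:Tree_Xi_when_g_eq_1} directly.

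The main obstacle is the nondecomposability verification, particularly in the divided case: the starred symmetric center introduces an asymmetry that demands careful accounting to rule out non-star parts of positive size. The key numerical input is that $\gcd(p,q) = 1$ forces any putative non-star part to carry a full $q$-block of black vertices, which the divided passport simply cannot afford to lose. Once that observation is isolated, the remaining steps are mechanical algebraic simplifications.
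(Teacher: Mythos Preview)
Your proposal is correct and follows exactly the route the paper has in mind. The paper gives no formal proof of this proposition: it simply asserts ``$g_0=1$ i.e.\ $\Fill(\Xi/d)$ are all nondecomposable'' and leaves the rest as an immediate application of Theorem~\ref{thm:passport_simple}(1) together with the values of $p(\Xi/d)$ from Proposition~\ref{prop:easiest_item}. Your write-up supplies the one step the paper skips, namely the balance argument showing that $\gcd(p,q)=1$ forces any non-star block in a putative partition to carry at least $q$ (resp.\ $p$) vertices of one color, more than the divided passport contains; this is precisely the $g_0=1$ specialization of the later Propositions~\ref{prop:partition_Xi_satisfy} and~\ref{prop:partition_Xi/d_satisfy}, so nothing is genuinely different.
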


\subsection{Case when d is equal to 1} \label{subsec:d_eq_1}

When $g_0 \ne 1$, to compute $G(d)$ or $\#\FTree(\Xi/d)$, we should describe the possible partitions of $\Fill(\Xi/d)$. We first consider the condition when $d = 1$.
\begin{proposition} \label{prop:partition_Xi_satisfy}
    The partitions of multisets $\Fill(\Xi)^+ = \coprod_{i=1}^{n} \Fill(\Xi)^+_i $ and  $\Fill(\Xi)^- = \coprod_{i=1}^{n} \Fill(\Xi)^-_i$ are a partition of $\Fill(\Xi)$ if and only if the following conditions are satisfied.
    \begin{equation} \label{eq:partition_Xi_satisfy}
        \frac{q}{g_0} | {\#\Fill(\Xi)^+_i} , \frac{p}{g_0} | {\#\Fill(\Xi)^-_i} , \frac{\#\Fill(\Xi)^+_i}{q/g_0} = \frac{\#\Fill(\Xi)^-_i}{p/g_0} , \forall 1 \leq i \leq n
    \end{equation}
    The definition of $g_0 := \gcd(p,q)$ is in \eqref{eq:def_g_012}.
\end{proposition}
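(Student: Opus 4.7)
The plan is to directly unfold the balance condition \eqref{eq:condition_partition_sum_weight} of a partition, specialized to our passport $\Xi = (p^q, q^p)$, and then use the coprimality of $p/g_0$ and $q/g_0$ to re-express it as the three conditions in \eqref{eq:partition_Xi_satisfy}.

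First, I would observe the key simplification: in the filled passport $\Fill(\Xi)$, \emph{every} element of $\Fill(\Xi)^+$ has weight $p$ (there are $q$ such elements) and every element of $\Fill(\Xi)^-$ has weight $q$ (there are $p$ such elements). Therefore, for any submultiset $\Fill(\Xi)^+_i \subseteq \Fill(\Xi)^+$ and $\Fill(\Xi)^-_i \subseteq \Fill(\Xi)^-$, we have
\[
\sum_{K_k \in \Fill(\Xi)^+_i} K = p\cdot \#\Fill(\Xi)^+_i, \qquad \sum_{L_l \in \Fill(\Xi)^-_i} L = q\cdot \#\Fill(\Xi)^-_i.
\]
Given that the multiset-partition conditions \eqref{eq:condition_partition_multiset} already hold by hypothesis, verifying that we have a partition of $\Fill(\Xi)$ reduces precisely to checking the balance identity
\[
p\cdot \#\Fill(\Xi)^+_i = q\cdot \#\Fill(\Xi)^-_i, \qquad 1 \leq i \leq n.
\]

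Second, I would introduce $p = g_0 (p/g_0)$ and $q = g_0 (q/g_0)$ with $\gcd(p/g_0, q/g_0) = 1$, dividing both sides by $g_0$ to obtain
\[
(p/g_0)\cdot \#\Fill(\Xi)^+_i = (q/g_0)\cdot \#\Fill(\Xi)^-_i.
\]
From here, the elementary number-theoretic fact that if $\gcd(a,b) = 1$ and $a m = b n$ then $b | m$, $a | n$, with equal quotients $m/b = n/a$, immediately yields that the above balance identity is equivalent to
\[
\tfrac{q}{g_0} \,\big|\, \#\Fill(\Xi)^+_i, \qquad \tfrac{p}{g_0} \,\big|\, \#\Fill(\Xi)^-_i, \qquad \frac{\#\Fill(\Xi)^+_i}{q/g_0} = \frac{\#\Fill(\Xi)^-_i}{p/g_0}.
\]
The converse is immediate: the three conditions on the right together imply the single identity $p \cdot \#\Fill(\Xi)^+_i = q \cdot \#\Fill(\Xi)^-_i$ after multiplying through by $g_0$.

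Since there is no real combinatorial subtlety here, I do not anticipate a main obstacle; the proposition is essentially a translation of \eqref{eq:condition_partition_sum_weight} via the uniform weights of $\Fill(\Xi)$ together with Bezout's identity. The value of this lemma lies not in its proof but in its application in the next subsections, where it will organize the partitions of $\Fill(\Xi/d)$ that appear inside the formula \eqref{eq:number_Tree_simple_decomposable} for $\#\FTree(\Xi/d)$.
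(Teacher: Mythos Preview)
Your proof is correct and follows essentially the same approach as the paper: reduce the balance condition \eqref{eq:condition_partition_sum_weight} to a single linear relation between the block sizes, then use the coprimality of $p/g_0$ and $q/g_0$ (the paper phrases this as finding the minimal positive solution of $ap=bq$) to extract the divisibility and equal-quotient conditions. Your write-up is in fact more explicit than the paper's, which compresses the argument into two sentences.
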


\begin{proof}
    By condition \eqref{eq:condition_partition_sum_weight} in Definition \ref{def:simple_partition_decomposable}, we get 
    \begin{equation*}
        \#\Fill(\Xi)^+_i \cdot q = \#\Fill(\Xi)^-_i \cdot p \text{ for all } 1 \leq i \leq n .
    \end{equation*}
    We know the minimal positive integer solution of $a p = b q$ is $a_0 = \frac{q}{g_0} , b_0 = \frac{g}{g_0}$. The above two discussions conclude the conditions \eqref{eq:partition_Xi_satisfy}.
\end{proof}

\begin{notation}
    We use the same notations of Proposition \ref{prop:partition_Xi_satisfy}. For a partition of $\Fill(\Xi)$, we denote the following multiset in power notation
    \begin{equation} \label{eq:partition_Xi_type_def}
        \{\frac{\#\Fill(\Xi)^+_i}{q/g_0} \,|\, 1 \leq i \leq n \} = 1^{n_1} \dots g_0^{n_{g_{0}}} , \text{ where } n_i \in \mathbb{Z}_{\geq 0}
    \end{equation}
    satisfies
    \begin{equation} \label{eq:partition_Xi_type_satisfy}
        \sum_{j=1}^{g_0} j n_j = g_0 , \sum_{j=1}^{g_0} n_j = n .
    \end{equation}
    And we call the vector $(n_1, \dots, n_{g_0})$ as the \textbf{type of the partition}. We denote the set of partitions of $\Fill(\Xi)$ with type $(n_1, \dots, n_{g_0})$ as $\Par_1(n_1, \dots, n_{g_0})$. And the set of possible types is defined in \eqref{eq:def_S_1}
    \begin{equation*}
        S_1 = \{(n_1, \dots, n_{g_0}) \in \mathbb{Z}_{\geq 0}^{g_0} \,|\, \sum_{j=1}^{g_0}{jn_j}=g_0 \} .
    \end{equation*}

\end{notation}

Using simple combinatorial techniques, we can get the number of partitions with given type.

\begin{proposition} \label{prop:partition_Xi_with_type_number}
    By the above notations, we have
    \begin{equation} \label{eq:partition_Xi_with_type_number}
        \# \Par_1(n_1, \dots, n_{g_0}) = \frac{1}{\prod_{j=1}^{g_0}{n_j!}}\frac{p!}{\prod_{j=1}^{g_0}{(\frac{jp}{g_0})!}^{n_j}}\frac{q!}{\prod_{j=1}^{g_0}{(\frac{jq}{g_0})!}^{n_j}}
    \end{equation}
\end{proposition}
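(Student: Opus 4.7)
The plan is to reduce the count of partitions of $\Fill(\Xi)$ with a prescribed type $(n_1,\dots,n_{g_0})$ to two independent classical set-partition enumerations, one on $\Fill(\Xi)^+$ and one on $\Fill(\Xi)^-$, followed by a matching step that glues the two sides into passport components. By Proposition \ref{prop:partition_Xi_satisfy} together with the type notation \eqref{eq:partition_Xi_type_def}, giving an element of $\Par_1(n_1,\dots,n_{g_0})$ is equivalent to specifying three pieces of data: (a) an unordered set partition of $\Fill(\Xi)^+$, which has $q$ pairwise distinct labeled elements, into blocks with exactly $n_j$ blocks of cardinality $jq/g_0$ for each $1\le j\le g_0$; (b) an unordered set partition of $\Fill(\Xi)^-$, which has $p$ pairwise distinct labeled elements, into blocks with exactly $n_j$ blocks of cardinality $jp/g_0$; and (c) for each $j$, a bijection between the $n_j$ positive blocks of size $jq/g_0$ and the $n_j$ negative blocks of size $jp/g_0$, which is what pairs a $+$-block with a $-$-block into a single component passport $\Xi_i$ satisfying \eqref{eq:condition_partition_sum_weight}.

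I would then count the three pieces separately using standard formulas. The number of set partitions in (a) is
\[
\frac{q!}{\prod_{j=1}^{g_0}\bigl((jq/g_0)!\bigr)^{n_j}\,\prod_{j=1}^{g_0}n_j!},
\]
obtained by distributing the $q$ labeled elements into an ordered sequence of blocks of prescribed sizes via a multinomial coefficient and then dividing by $\prod_j n_j!$ to mod out the permutations of blocks of equal size. The analogous count for (b) is
\[
\frac{p!}{\prod_{j=1}^{g_0}\bigl((jp/g_0)!\bigr)^{n_j}\,\prod_{j=1}^{g_0}n_j!},
\]
and the number of bijections in (c) is $\prod_{j=1}^{g_0}n_j!$. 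Multiplying the three expressions gives exactly
\[
\frac{1}{\prod_{j=1}^{g_0}n_j!}\,\frac{p!}{\prod_{j=1}^{g_0}\bigl((jp/g_0)!\bigr)^{n_j}}\,\frac{q!}{\prod_{j=1}^{g_0}\bigl((jq/g_0)!\bigr)^{n_j}},
\]
since one copy of $\prod_j n_j!$ from (c) cancels against one copy in (a) (or equivalently in (b)), leaving a single $\prod_j n_j!$ in the denominator.

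The only real issue is the bookkeeping in (c): one must verify that the matching step is counted precisely once, so that the data (a), (b), (c) really is in bijection with $\Par_1(n_1,\dots,n_{g_0})$. This is automatic once one remembers that a partition component $\Xi_i$ is a passport, i.e.\ an ordered pair of block-halves rather than an unordered pair, so pairing up same-$j$ blocks across the two sides is exactly the remaining freedom. As a sanity check, one can alternatively count ordered passport partitions with a fixed sequence of types, sum over the $n!/\prod_j n_j!$ distinct type sequences, and divide by $n!$ (all parts are pairwise distinct because $\Fill(\Xi)$ is simple); this recovers the same expression. Either route is a routine computation, and I do not foresee a serious obstacle.
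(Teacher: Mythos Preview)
Your argument is correct: the decomposition into (a) an unordered set partition of the $q$ labeled elements of $\Fill(\Xi)^+$, (b) an unordered set partition of the $p$ labeled elements of $\Fill(\Xi)^-$, and (c) a size-preserving matching of blocks is exactly the ``simple combinatorial technique'' the paper invokes without writing out, and your multinomial bookkeeping (including the cancellation of one factor of $\prod_j n_j!$) yields the stated formula. The paper gives no proof here, so there is nothing further to compare.
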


Using above propositions, we can finally compute the value of $G(1)$ in \eqref{eq:G(1)_sum_contribution}.

\begin{proposition} \label{prop:G(1)_when_g_ne_1}
    \begin{equation*}
        G(1) = \sum_{(n_1, \dots, n_{g_0}) \in S_1} \mathcal{C}_1(n_1, \dots, n_{g_0}) .
    \end{equation*}
    \begin{equation} \label{eq:def_C_1}
        \mathcal{C}_1(n_1, \dots, n_{g_0}) = (-1)^{n-1} (p+q-1)^{n-2} \prod_{j=1}^{g_0} {\frac{1}{n_j!}[\frac{(\frac{j(p+q)}{g_0}-1)!}{(\frac{jp}{g_0})!(\frac{jq}{g_0})!}]^{n_j}} ,
    \end{equation}
    where $n = \sum_{j=1}^{g_0} n_j$.
\end{proposition}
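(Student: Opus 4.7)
The plan is to unpack the definition $G(1) = \#\FTree(\Xi)/p(\Xi)$ and compute $\#\FTree(\Xi)$ by summing the contributions of all possible partitions of $\Fill(\Xi)$ via Theorem \ref{thm:passport_simple}, grouping the partitions according to the combinatorial type $(n_1,\dots,n_{g_0}) \in S_1$ introduced in \eqref{eq:partition_Xi_type_def}.

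First I would note that Theorem \ref{thm:passport_simple} applies to $\Fill(\Xi)$ because the filling is always simple, and that the two cases of that theorem can be treated uniformly: the $n=1$ contribution $(-1)^0(\#\Xi-1)^{-1}(\#\Xi-1)! = (\#\Xi-2)!$ equals the value in the nondecomposable case, so $\#\FTree(\Xi)$ is the sum of $(-1)^{n-1}(\#\Xi-1)^{n-2}\prod_{i=1}^n (\#\Xi_i-1)!$ over \emph{all} $n$-partitions of $\Fill(\Xi)$ for $n \geq 1$.

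Second, I would observe that by Proposition \ref{prop:partition_Xi_satisfy}, each block $\Xi_i$ of such a partition has $\#\Xi_i = j(p+q)/g_0$ for some $j$ with $1 \leq j \leq g_0$, where $j = \#\Fill(\Xi)^+_i / (q/g_0)$; thus $(\#\Xi_i - 1)! = (j(p+q)/g_0 - 1)!$ depends only on $j$. Consequently the contribution of a single partition of type $(n_1,\dots,n_{g_0})$ is
\begin{equation*}
    (-1)^{n-1}(p+q-1)^{n-2}\prod_{j=1}^{g_0}\bigl[(\tfrac{j(p+q)}{g_0}-1)!\bigr]^{n_j},
\end{equation*}
with $n = \sum_j n_j$, and this factor is the same for every partition of the same type.

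Third, I would use Proposition \ref{prop:partition_Xi_with_type_number} to count $\#\Par_1(n_1,\dots,n_{g_0})$ and multiply by the contribution above; summing over $(n_1,\dots,n_{g_0}) \in S_1$ gives $\#\FTree(\Xi)$. Dividing by $p(\Xi) = p!\,q!$ (cf.\ Proposition \ref{prop:easiest_item}\,(3) at $d=1$) cancels the two factorials $p!,q!$ in the numerator of \eqref{eq:partition_Xi_with_type_number}, and collecting the remaining factors yields exactly
\begin{equation*}
    (-1)^{n-1}(p+q-1)^{n-2}\prod_{j=1}^{g_0}\frac{1}{n_j!}\Bigl[\frac{(\tfrac{j(p+q)}{g_0}-1)!}{(\tfrac{jp}{g_0})!(\tfrac{jq}{g_0})!}\Bigr]^{n_j},
\end{equation*}
which is $\mathcal{C}_1(n_1,\dots,n_{g_0})$ as in \eqref{eq:def_C_1}. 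There is no real obstacle; the only care required is in the bookkeeping of factorials and in verifying the $n=1$ degenerate case, both of which are straightforward once the partitions have been sorted by type.
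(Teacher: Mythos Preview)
Your proposal is correct and follows essentially the same route as the paper: compute $\#\FTree(\Xi)$ via Theorem~\ref{thm:passport_simple} by grouping partitions of $\Fill(\Xi)$ according to their type $(n_1,\dots,n_{g_0})$, use Proposition~\ref{prop:partition_Xi_with_type_number} to count partitions of each type, multiply by the common per-partition contribution, sum over $S_1$, and divide by $p(\Xi)=p!\,q!$. Your explicit remark that the $n=1$ case is absorbed into the general formula is a useful clarification that the paper leaves implicit.
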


\begin{proof}
    By Theorem \ref{thm:passport_simple}, each partition of $\Fill(\Xi)$ contributes a summation term to $\#\FTree(\Xi)$. We have known there are \# $\Par_1(n_1, \dots, n_{g_0})$ partitions with type $(n_1, \dots, n_{g_0})$, and each partition with type $(n_1, \dots, n_{g_0})$ has the same contribution. Therefore, the total contribution of partitions with type $(n_1, \dots, n_{g_0})$ is

    \begin{equation} \label{eq:def_Con_1}
        \begin{gathered}
            \Con_1(n_1, \dots, n_{g_0}) = \\
            \# \Par(n_1, \dots, n_{g_0}) (-1)^{n-1}(p+q-1)^{n-2}\prod_{j=1}^{g_0} {(j\frac{p+q}{g_0}-1)!}^{n_j} = \\
            = p!q! (-1)^{n-1} (p+q-1)^{n-2} \prod_{j=1}^{g_0} {\frac{1}{n_j!}[\frac{(\frac{j(p+q)}{g_0}-1)!}{(\frac{jp}{g_0})!(\frac{jq}{g_0})!}]^{n_j}} \\
        \end{gathered} .
    \end{equation}

    And we have
    \begin{equation} \label{eq:FTree_Xi_Con_1}
            \#\FTree(\Xi) = \sum_{(n_1, \dots, n_{g_0}) \in S_1} \Con_1(n_1, \dots, n_{g_0}) .
    \end{equation}
    By equation \eqref{eq:p(Xi/d)}, $p(\Xi) = p!q!$. Then we have
    \begin{equation*}
        G(1) = \frac{\#FTree(\Xi)}{p(\Xi)} = \sum_{(n_1, \dots, n_{g_0}) \in S_1} \frac{\Con_1(n_1, \dots, n_{g_0})}{p(\Xi)} = \sum_{(n_1, \dots, n_{g_0}) \in S_1} \mathcal{C}_1(n_1, \dots, n_{g_0})
    \end{equation*}
\end{proof}

\subsection{Case when d is greater than 1} \label{subsec:d_ne_1}
Now we consider the condition when $d > 1$. If $\Xi / d$ exists, we always assume
\begin{equation} \label{eq:assume_Xi/d}
    \Xi / d = (q^{(p-1)/d}(q/d)_*, p^{q/d}) ,
\end{equation}
because another condition is similar.

\begin{proposition} \label{prop:partition_Xi/d_satisfy}
    Assume $d \in \mathbb{Z}_{>1}$, $\Xi / d$ exists and $\Xi / d = (q^{(p-1)/d}(q/d)_*, p^{q/d})$.
    
    Then the partitions of multisets $\Fill(\Xi/d)^+ = \coprod_{i=1}^{n} \Fill(\Xi/d)^+_i $ and  $\Fill(\Xi/d)^- = \coprod_{i=1}^{n} \Fill(\Xi/d)^-_i$ are a partition of $\Fill(\Xi/d)$ if and only if
    \begin{equation} \label{eq:partition_(Xi/d)_satisfy_1}
        \frac{q}{g_0} | {\#\Fill(\Xi/d)^+_i} , \frac{p}{g_0} | {\#\Fill(\Xi/d)^-_i} , \frac{\#\Fill(\Xi/d)^+_i}{q/g_0} = \frac{\#\Fill(\Xi/d)^-_i}{p/g_0} \text{ if } 1 \leq i \leq n-1 
    \end{equation}
    and
    \begin{equation} \label{eq:partition_(Xi/d)_satisfy_2}
        \#\Fill(\Xi/d)^+_n = (s + \{\frac{g_0}{d}\}) \frac{q}{g_0} - \frac{1}{d} , \#\Fill(\Xi/d)^-_n = (s + \{\frac{g_0}{d}\})\frac{p}{g_0} \text{ for some } s \in \mathbb{Z}_{\geq 0}
    \end{equation}
    is satisfied, where the unique $(q/d)_*$ is in $\Fill(\Xi/d)^+_n$, which can always be obtained by renumbering indicator $i$.
\end{proposition}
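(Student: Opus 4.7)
The plan is to apply the definition of a passport partition (Definition~\ref{def:simple_partition_decomposable}) directly to $\Fill(\Xi/d)$ and to solve the resulting linear Diophantine conditions part by part. First I would set $a_i := \#\Fill(\Xi/d)^+_i$ and $b_i := \#\Fill(\Xi/d)^-_i$. Since the star-labeled vertex $(q/d)_*$ is the unique element of its labeled weight in $\Fill(\Xi/d)$, it lies in exactly one part of any given partition, and after renumbering I may assume it lies in part $n$. Every other black vertex carries weight $q$ and every white vertex carries weight $p$, so the weight-balance condition \eqref{eq:condition_partition_sum_weight} splits into two types of constraints depending on whether $i < n$ or $i = n$.

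For $1 \leq i \leq n-1$ the balance reduces to a homogeneous equation of the form (black count)$\,\cdot q\,=\,$(white count)$\,\cdot p$. Invoking $\gcd(p/g_0,\, q/g_0) = 1$, the positive integer solutions consist exactly of the positive multiples of the minimal positive solution, which is precisely the content of condition \eqref{eq:partition_(Xi/d)_satisfy_1}.

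For $i = n$ the star vertex contributes an extra $q/d$ on the black side, so the balance becomes $(a_n - 1)q + q/d = b_n p$. Clearing denominators yields $q(d a_n - d + 1) = p d b_n$; dividing by $g_0$ and using coprimality of $p/g_0$ and $q/g_0$ forces $(p/g_0) \mid (d a_n - d + 1)$, so I would write $d a_n - d + 1 = (p/g_0) t$ with correspondingly $d b_n = (q/g_0) t$ for some $t \in \mathbb{Z}_{>0}$. The remaining task is to identify those $t$ for which both $a_n$ and $b_n$ are positive integers. Here the key structural input is $\gcd(g_0, d) = 1$: since $d \mid g_1 = \gcd(p-1, q)$ and $g_0 \mid p$, any common divisor of $g_0$ and $d$ would also divide $p - (p-1) = 1$. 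With this coprimality in hand, the integrality of $b_n$ is automatic from $d \mid (q/g_0)$, while the integrality of $a_n$ reduces to a single congruence on $t$ modulo $d$. Parameterizing the solution set of that congruence by a nonnegative integer $s$ and substituting back should recover the precise form \eqref{eq:partition_(Xi/d)_satisfy_2}.

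The reverse direction is routine: any collection $(a_i, b_i)$ satisfying \eqref{eq:partition_(Xi/d)_satisfy_1}--\eqref{eq:partition_(Xi/d)_satisfy_2} is realized by an honest partition, obtained by distributing the all-distinct labeled weights of $\Fill(\Xi/d)$ into parts of the prescribed sizes. I expect the main obstacle to be the integrality bookkeeping in part $n$ — pinning down the correct congruence class of $t$ modulo $d$ and matching the resulting parameterization to the asserted form in \eqref{eq:partition_(Xi/d)_satisfy_2}. Once $\gcd(g_0, d) = 1$ is available, however, the two integrality constraints decouple cleanly, and what remains is careful computation rather than a deeper argument.
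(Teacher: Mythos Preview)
Your proposal is correct and follows essentially the same route as the paper: split off the part containing the starred vertex, reduce each balance condition to a linear Diophantine equation in the part sizes, and parameterize the solutions via the minimal positive solution plus multiples of the homogeneous one. You are in fact more careful than the paper, which simply asserts the minimal solution of the inhomogeneous equation without justification; your observation that $\gcd(g_0,d)=1$ (from $d\mid p-1$ and $g_0\mid p$) is exactly what is needed to make that assertion rigorous and to decouple the two integrality constraints.
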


\begin{proof}
    The condition \eqref{eq:partition_(Xi/d)_satisfy_1} can be proved by the same method of \eqref{eq:partition_Xi_satisfy}. By condition \eqref{eq:condition_partition_sum_weight} in Definition \ref{def:simple_partition_decomposable}, we can also get
    \begin{equation*}
        \#\Fill(\Xi/d)^+_n \cdot q + \frac{q}{d} = \#\Fill(\Xi/d)^-_n \cdot p .
    \end{equation*}
    We know the minimal positive integer solution of $a p + \frac{p}{d} = b q$ is $a_0 = \{\frac{g_0}{d}\} \frac{q}{g_0} - \frac{1}{d} , b_0 = \{\frac{g_0}{d}\} \frac{p}{g_0}$. As a result, 
    \begin{equation*}
        \#\Fill(\Xi/d)^+_n = a_0 + s \cdot \frac{q}{g_0} , \#\Fill(\Xi/d)^-_n = b_0 + s \cdot \frac{p}{g_0} ,
    \end{equation*}
    which is the same as condition \eqref{eq:partition_(Xi/d)_satisfy_2}
\end{proof}

\begin{notation}
    We use the same notations of Proposition \ref{prop:partition_Xi/d_satisfy}. Set
    \begin{equation} \label{eq:partition_(Xi/d)_type_def}
        \{\frac{\#\Fill(\Xi/d)^+_i}{q/g_0} | 1 \leq i \leq n-1 \} = 1^{n_1} \dots g_0^{n_{g_{0}}} ,
    \end{equation}
    satisfied
    \begin{equation} \label{eq:partition_(Xi/d)_type_satisfy}
        s + \sum_{j=1}^{g_0} j n_j =\lfloor\frac{g_0}{d}\rfloor  , 1 + \sum_{j=1}^{g_0} n_j = n .
    \end{equation}

    We call the vector $(s, n_1, \dots, n_{g_0})$ as the \textbf{type of the partition}. We denote the set of partitions of $\Fill(\Xi/d)$ with type $(s, n_1, \dots, n_{g_0})$ as $\Par_d(s, n_1, \dots, n_{g_0})$. And the set of possible types is defined in \eqref{eq:def_S_d}
    \begin{equation*}
        S_d = \{(s, n_1, \dots, n_{g_0}) \in \mathbb{Z}_{\geq 0}^{g_0 + 1} \,|\, s + \sum_{j=1}^{g_0}{jn_j} = \lfloor\frac{g_0}{d}\rfloor \} .
    \end{equation*}
\end{notation}

The following proposition is similar to Proposition \ref{prop:partition_Xi_with_type_number}. The only difference is that the passport $\Fill(\Xi/d)_n$ in partitions is special because it contains the unique $(q/d)_*$.

\begin{proposition} \label{prop:partition_(Xi/d)_with_type_number}
    By above notation, we get 
    \begin{equation} \label{eq:partition_(Xi/d)_with_type_number}
        \begin{gathered}
            \# \Par_d(s, n_1, \dots, n_{g_0}) = \\
            \frac{1}{\prod_{j=1}^{g_0}{n_j!}} \frac{((p-1)/d)!}{[(s+\{\frac{g_0}{d}\})\frac{p}{g_0}-\frac{1}{d}]!\prod_{j=1}^{g_0}{(j\frac{p}{g_0})!}^{n_j}} \frac{(q/d)!}{[(s+\{\frac{g_0}{d}\})\frac{q}{g_0}]!\prod_{j=1}^{g_0}{(j\frac{q}{g_0})!}^{n_j}}
        \end{gathered}
    \end{equation}
\end{proposition}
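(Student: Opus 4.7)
The plan is a direct multinomial count, organized around the distinguished $n$-th part --- the unique part containing the starred vertex $(q/d)_*$. By Proposition \ref{prop:partition_Xi/d_satisfy} together with the type definition \eqref{eq:partition_(Xi/d)_type_def}, a partition in $\Par_d(s, n_1, \ldots, n_{g_0})$ is prescribed by the following block sizes: on the positive side, the $n$-th part consists of the star together with a block of size $a_n := (s+\{g_0/d\})\tfrac{p}{g_0} - \tfrac{1}{d}$ drawn from the $(p-1)/d$ non-star labels of $\Fill(\Xi/d)^+$, and for each $1 \le j \le g_0$ there are $n_j$ unordered positive blocks of size $jp/g_0$; on the negative side, the $n$-th part uses $b_n := (s+\{g_0/d\})\tfrac{q}{g_0}$ labels of $\Fill(\Xi/d)^-$, with $n_j$ unordered negative blocks of size $jq/g_0$ for each $j$.

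The enumeration then splits into three independent choices. First, I would partition the $(p-1)/d$ non-star positive labels into one distinguished block of size $a_n$ together with, for each $j$, $n_j$ interchangeable blocks of size $jp/g_0$; a standard multinomial/symmetry count gives
\[
\frac{((p-1)/d)!}{a_n!\,\prod_{j=1}^{g_0} (jp/g_0)!^{\,n_j}\,\prod_{j=1}^{g_0} n_j!}\,.
\]
The analogous partition on the negative side contributes
\[
\frac{(q/d)!}{b_n!\,\prod_{j=1}^{g_0} (jq/g_0)!^{\,n_j}\,\prod_{j=1}^{g_0} n_j!}\,.
\]
Finally, the non-distinguished positive and negative blocks must be paired up into parts, and only blocks of matching type $j$ can pair (by the balance condition \eqref{eq:condition_partition_sum_weight}); so for each $j$ I would select a bijection between the $n_j$ positive blocks of size $jp/g_0$ and the $n_j$ negative blocks of size $jq/g_0$, giving $\prod_{j=1}^{g_0} n_j!$ pairings in total.

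Multiplying the three factors and cancelling one copy of $\prod_j n_j!$ against the pairing factor yields precisely the right-hand side of \eqref{eq:partition_(Xi/d)_with_type_number}. The only point requiring care is the distinguished status of the starred block: because the star uniquely singles out the $n$-th part, that positive block is not interchangeable with other positive blocks that happen to have the same size, which is why $a_n!$ appears separately rather than being absorbed into a $\prod_j n_j!$ factor. The balance equation $a_n q + q/d = b_n p$ that legitimizes the star-containing part was already established in Proposition \ref{prop:partition_Xi/d_satisfy}, so beyond this bookkeeping the argument is entirely routine and I do not expect any substantive obstacle.
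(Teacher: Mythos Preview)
Your argument is correct and is precisely the kind of routine multinomial count the paper has in mind: the paper gives no proof of this proposition beyond the remark that it is ``similar to Proposition \ref{prop:partition_Xi_with_type_number}'' with the starred block playing a distinguished role, and your three-step decomposition (unordered positive blocks, unordered negative blocks, type-by-type matching) executes exactly that. The only cosmetic point is that the block sizes you write down (positive blocks of size $jp/g_0$, negative of size $jq/g_0$, and the analogous star-block sizes) match the formula \eqref{eq:partition_(Xi/d)_with_type_number} you are proving but are swapped relative to the statement of Proposition~\ref{prop:partition_Xi/d_satisfy} as printed; this is a typo in the paper (the balance $aq=bp$ forces positive block size a multiple of $p/g_0$, as you use), not an error in your reasoning.
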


Using above propositions, we can finally compute the value of $G(d)$ mentioned in \eqref{eq:G(d)_sum_contribution}.

\begin{proposition} \label{prop:G(d)_when_g_ne_1}
    Suppose $\Xi / d$ exists, then
    \begin{equation*}
        G(d) = \sum_{(s, n_1, \dots, n_{g_0}) \in S_d} \mathcal{C}_d(s, n_1, \dots, n_{g_0}) .
    \end{equation*}
    \begin{equation} \label{eq:def_C_d}
        \begin{gathered}
            \mathcal{C}_d(s, n_1, \dots, n_{g_0}) = \\
            \begin{dcases} 
                (-1)^{n-1} \frac{(p+q-1)^{n-2}}{d^{n-1}} \binom{(s+\{\frac{g_0}{d}\})\frac{p+q}{g_0}-\frac{1}{d}}{(s+\{\frac{g_0}{d}\})\frac{q}{g_0}}\prod_{j=1}^{g_0}{\frac{1}{n_j!}[\frac{(\frac{j(p+q)}{g_0}-1)!}{(\frac{jp}{g_0})!(\frac{jq}{g_0})!}]^{n_j}} \\
                \hfill \text{ if } 1<d|g_1 , \\
                (-1)^{n-1} \frac{(p+q-1)^{n-2}}{d^{n-1}} \binom{(s+\{\frac{g_0}{d}\})\frac{p+q}{g_0}-\frac{1}{d}}{(s+\{\frac{g_0}{d}\})\frac{p}{g_0}}\prod_{j=1}^{g_0}{\frac{1}{n_j!}[\frac{(\frac{j(p+q)}{g_0}-1)!}{(\frac{jp}{g_0})!(\frac{jq}{g_0})!}]^{n_j}} \\
                \hfill \text{ if } 1<d|g_2 , \\
            \end{dcases}
        \end{gathered}
    \end{equation}
    where $1 + \sum_{j=1}^{g_0} n_j = n$ holds.
\end{proposition}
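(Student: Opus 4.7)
By Definition \ref{def:g_i_and_g_d}, we have $G(d) = \#\FTree(\Xi/d)/(d\, p(\Xi/d))$, so the strategy is to compute $\#\FTree(\Xi/d)$ via Theorem \ref{thm:passport_simple} and then divide by $d\,p(\Xi/d)$. Since $\Fill(\Xi/d)$ is simple by construction, Theorem \ref{thm:passport_simple}(2) expresses $\#\FTree(\Xi/d) = \#\Tree(\Fill(\Xi/d))$ as the signed sum, over all $n$-partitions of $\Fill(\Xi/d)$, of
\[
(-1)^{n-1}\bigl(\tfrac{p+q-1}{d}\bigr)^{n-2}\prod_{i=1}^{n}(\#\Fill(\Xi/d)_i - 1)!,
\]
using that $\#\Fill(\Xi/d) - 1 = (p+q-1)/d$ under the convention \eqref{eq:assume_Xi/d}.

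Next I would group the partitions by their type $(s, n_1, \dots, n_{g_0}) \in S_d$, as classified in Proposition \ref{prop:partition_Xi/d_satisfy}: the star vertex $(q/d)_*$ lies in a unique distinguished piece of size controlled by $s$, while the remaining $n-1$ pieces have sizes encoded by $(n_1, \dots, n_{g_0})$. Since every piece's size depends only on the type, every partition of a fixed type yields the same contribution to the formula above, and Proposition \ref{prop:partition_(Xi/d)_with_type_number} counts $\#\Par_d(s,n_1,\dots,n_{g_0})$ partitions of each type. The product of piece-factorials splits as
\[
\bigl((s+\{\tfrac{g_0}{d}\})\tfrac{p+q}{g_0} - \tfrac{1}{d} - 1\bigr)! \cdot \prod_{j=1}^{g_0}\bigl(j\tfrac{p+q}{g_0} - 1\bigr)!^{n_j}
\]
coming from the star-piece and the regular pieces respectively.

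To finish, I would substitute $p(\Xi/d) = ((p-1)/d)!\,(q/d)!$ from \eqref{eq:p(Xi/d)} and cancel the matching factorials against the numerator of $\#\Par_d$ in \eqref{eq:partition_(Xi/d)_with_type_number}. After rearrangement, the products over $j$ assemble into $\prod_{j=1}^{g_0}\frac{1}{n_j!}\bigl[\frac{(j(p+q)/g_0-1)!}{(jp/g_0)!(jq/g_0)!}\bigr]^{n_j}$ appearing in \eqref{eq:def_C_d}, while the star-piece factorial together with the denominators $((s+\{g_0/d\})q/g_0)!$ and $((s+\{g_0/d\})p/g_0 - 1/d)!$ from $\#\Par_d$ assemble into the binomial coefficient $\binom{(s+\{g_0/d\})(p+q)/g_0 - 1/d}{(s+\{g_0/d\})q/g_0}$. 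The remaining factor $(p+q-1)^{n-2}/d^{n-2}$ combines with the outer $1/d$ to produce the $1/d^{n-1}$ in \eqref{eq:def_C_d}. This yields the $d\mid g_1$ case; the $d\mid g_2$ case is identical with the roles of $p$ and $q$ interchanged, reflecting the alternative convention for $\Xi/d$ in which the star sits on the opposite color. The main obstacle is purely bookkeeping: the binomial coefficient only becomes visible after every factorial cancellation is carried out in the correct order, and verifying that the fractional offset $\{g_0/d\}$ combines with $1/d$ in the star-piece to give an integer requires noting that $\gcd(d,g_0)=1$ whenever $d>1$ and $d\mid g_1$ (or $d\mid g_2$), which is what forces $s+\{g_0/d\}$ to appear as a single coherent block-index.
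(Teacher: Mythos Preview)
Your proposal is correct and follows essentially the same route as the paper's proof: compute $\#\FTree(\Xi/d)$ by applying Theorem~\ref{thm:passport_simple} to the simple passport $\Fill(\Xi/d)$, group the partitions by their type $(s,n_1,\dots,n_{g_0})$ using Propositions~\ref{prop:partition_Xi/d_satisfy} and~\ref{prop:partition_(Xi/d)_with_type_number}, multiply the count of each type by its common contribution, and then divide by $d\,p(\Xi/d)$ from \eqref{eq:p(Xi/d)}. Your write-up is in fact slightly more explicit than the paper's (which defers to the analogous Proposition~\ref{prop:G(1)_when_g_ne_1}); in particular your remark that $\gcd(d,g_0)=1$ forces the fractional offsets to assemble into integers is a useful sanity check the paper does not spell out.
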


\begin{proof}
    Assume $1<d|g_1$ and $\Xi / d = (q^{(p-1)/d}(q/d)_*, p^{q/d})$.

    Similar to the proof of \ref{prop:G(1)_when_g_ne_1}, we can get the total contribution of partitions with type $(s, n_1, \dots, n_{g_0})$ to $\#\FTree(\Xi/d)$.
    \begin{equation} \label{eq:def_Con_d}
        \begin{gathered}
            \Con_d(s, n_1, \dots, n_{g_0}) = \\
            \# \Par(n_1, \dots, n_{g_0}) (-1)^{n-1}(\frac{p+q-1}{d})^{n-2}[(s+\{\frac{g_0}{d}\})\frac{q}{g_0}+(s+\{\frac{g_0}{d}\})\frac{p}{g_0}-\frac{1}{d}]! \\
            \prod_{j=1}^{g_0}{(\frac{j(p+q)}{g_0}-1)!}^{n_j} = \\
            ((p-1)/d)! (q/d)! (-1)^{n-1} (\frac{p+q-1}{d})^{n-2} \binom{(s+\{\frac{g_0}{d}\})\frac{p+q}{g_0}-\frac{1}{d}}{(s+\{\frac{g_0}{d}\})\frac{q}{g_0}} \\
            \prod_{j=1}^{g_0}{\frac{1}{n_j!}[\frac{(\frac{j(p+q)}{g_0}-1)!}{(\frac{jp}{g_0})!(\frac{jq}{g_0})!}]^{n_j}} \\
        \end{gathered} .
    \end{equation}

    And we have
    \begin{equation} \label{eq:FTree_Xi_Con_d}
            \#\FTree(\Xi/d) = \sum_{(s, n_1, \dots, n_{g_0}) \in S_d} \Con_d(s, n_1, \dots, n_{g_0}) .
    \end{equation}
    By equation \eqref{eq:p(Xi/d)}, $p(\Xi) = ((p-1)/d)! (q/d)!$. Then we have
    \begin{equation*}
        \begin{gathered}
            G(d) = \frac{\#FTree(\Xi/d)}{dp(\Xi/d)} = \sum_{(s, n_1, \dots, n_{g_0}) \in S_d} \frac{\Con_d(s, n_1, \dots, n_{g_0})}{dp(\Xi/d)} = \\
            \sum_{(s, n_1, \dots, n_{g_0}) \in S_d} \mathcal{C}_d(s, n_1, \dots, n_{g_0})
        \end{gathered}
    \end{equation*} 
\end{proof}

\begin{proof}[Proof of Theorem \ref{thm:main}]
    By Proposition \ref{prop:easiest_item}, we prove Equation \eqref{eq:Tree_Xi_with_pq}.
    By Proposition \ref{prop:G(1)_when_g_ne_1}, we prove Equation \eqref{eq:G(1)_sum_contribution}.
    By Proposition \ref{prop:G(d)_when_g_ne_1}, we prove Equation \eqref{eq:G(d)_sum_contribution}.
\end{proof}

\begin{remark}
    By Corollary \ref{cor:number_of_Tree_Xi_d}, we can also use $G(d)$ to compute $\#\Tree(\Xi, d)$ i.e. the number of LWBP-trees with passport $\Xi$ and $d$-order symmetry .
\end{remark}

Finally we give two examples about the formula.

\begin{example}
    We set $p = 7$ and $q = 3$, i.e. $\Xi = (3^7,7^3)$, then $g_0 = 1, g_1 = 3$ and $g_2 = 1$. Using Equation \eqref{eq:G(d)_when_g_eq_1} we get :
    \begin{equation*}
        G(1) = \frac{(7+3-2)!}{7!3!} = \frac{4}{3} ,
    \end{equation*}
    \begin{equation*}
        G(3) = \frac{1}{7+3-1} \binom{\frac{7+3-1}{3}}{\frac{3}{3}} = \frac{1}{3} ,
    \end{equation*}
    Then the total number of $Tree(\Xi)$ is 
    \begin{equation*}
        \#\Tree(\Xi) = G(1) + \varphi(3) G(3) = \frac{4}{3} + 2 \times\frac{1}{3} = 2 ,
    \end{equation*}
    where the number of $Tree(\Xi, 1)$ and $Tree(\Xi, 3)$ are
    \begin{equation*}
        \#Tree(\Xi, 1) = \mu(\frac{1}{1}) G(1) + \mu(\frac{3}{1}) G(3) = 1 ,\quad \#Tree(\Xi, 3) = 3 \mu(\frac{3}{3}) G(3) = 1 .
    \end{equation*}
    Thus there are one tree with 1-order rotational symmetry, and one with 3-order. See Figure \ref{fig:tree_p7q3}.
\end{example}

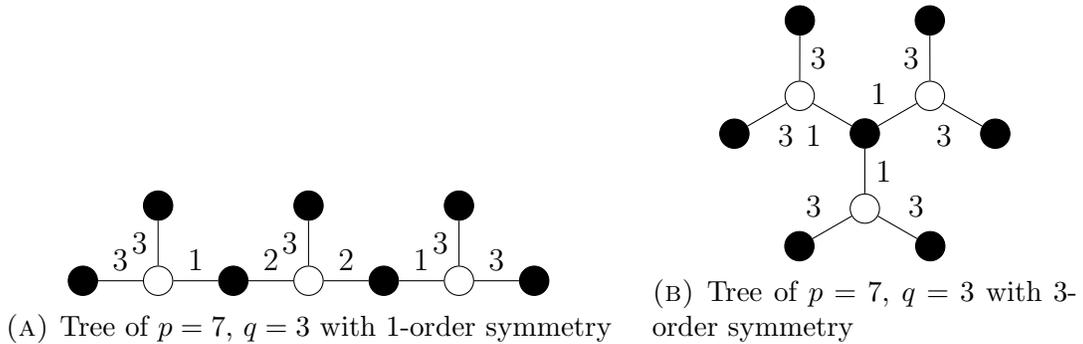
\begin{figure}[htbp]
    \centering
    \begin{subfigure}{0.59\textwidth}
        \centering

        \begin{tikzpicture}
            \node[wht] (M) at (0, 0) {};
            \node[blk] (U) at (0, 1) {};
            \node[blk] (L) at (-1, 0) {};
            \node[wht] (LL) at (-2, 0) {};
            \node[blk] (LLU) at (-2, 1) {};
            \node[blk] (LLL) at (-3, 0) {};
            \node[blk] (R) at (1, 0) {};
            \node[wht] (RR) at (2, 0) {};
            \node[blk] (RRU) at (2, 1) {};
            \node[blk] (RRR) at (3, 0) {};

            \draw (M) -- (U) node[mid_auto] {3};
            \draw (L) -- (M) node[mid_auto] {2};
            \draw (LL) -- (L) node[mid_auto] {1};
            \draw (LL) -- (LLU) node[mid_auto] {3};
            \draw (LLL) -- (LL) node[mid_auto] {3};
            \draw (M) -- (R) node[mid_auto] {2};
            \draw (R) -- (RR) node[mid_auto] {1};
            \draw (RR) -- (RRU) node[mid_auto] {3};
            \draw (RR) -- (RRR) node[mid_auto] {3};
        \end{tikzpicture}
        \caption{Tree of $p=7$, $q=3$ with 1-order symmetry}
        \label{subfig:tree_p7q3_rot1}
    \end{subfigure}
    \hfill
    \centering
    \begin{subfigure}{0.39\textwidth}
        \centering
        \begin{tikzpicture}
            \node[blk] (M) at (0, 0) {};
            \begin{scope}[rotate =0]
                \node[wht] (D) at (0, -1) {};
                \node[blk] (DL) at (-0.87, -1.5) {};
                \node[blk] (DR) at (0.87, -1.5) {};
                \draw (M) -- (D) node[mid_auto] {1};
                \draw (DL) -- (D) node[mid_auto] {3};
                \draw (D) -- (DR) node[mid_auto] {3};
            \end{scope}
            \begin{scope}[rotate =120]
                \node[wht] (D) at (0, -1) {};
                \node[blk] (DL) at (-0.87, -1.5) {};
                \node[blk] (DR) at (0.87, -1.5) {};
                \draw (M) -- (D) node[mid_auto] {1};
                \draw (DL) -- (D) node[mid_auto] {3};
                \draw (D) -- (DR) node[mid_auto] {3};
            \end{scope}
            \begin{scope}[rotate =240]
                \node[wht] (D) at (0, -1) {};
                \node[blk] (DL) at (-0.87, -1.5) {};
                \node[blk] (DR) at (0.87, -1.5) {};
                \draw (M) -- (D) node[mid_auto] {1};
                \draw (DL) -- (D) node[mid_auto] {3};
                \draw (D) -- (DR) node[mid_auto] {3};
            \end{scope}

        \end{tikzpicture}
        \caption{Tree of $p=7$, $q=3$ with 3-order symmetry}
        \label{subfig:tree_p7q3_rot3}
    \end{subfigure}
    \caption{Two trees of $p=7$, $q=3$}
    \label{fig:tree_p7q3}
\end{figure}

\begin{figure}[htbp]
    \centering
    \begin{subfigure}{0.3\textwidth}
        \centering
        % 5rot
        \begin{tikzpicture}
            \begin{scope} [xscale = 0.6, yscale = 0.6]
                \node[wht] (M) at (0,0) {};
                \tikzset{my642={M,0}}
                \tikzset{my642={M,72}}
                \tikzset{my642={M,144}}
                \tikzset{my642={M,216}}
                \tikzset{my642={M,288}}
            \end{scope}
        \end{tikzpicture}
        \caption{1 tree in $\Tree(\Xi, 5)$}
    \end{subfigure}
    \hfill
    \begin{subfigure}[t]{0.69\textwidth}
        \centering
        % 3rot1
        \begin{tikzpicture}
            \node[blk] (M) at (0,0) {};
            \begin{scope} [xscale = 0.6, yscale = 0.6]
                \node[wht] (R) at (1,0) {};
                \tikzset{my642={R,-60}}
                \node[blk] (RU) at (1.5,0.87) {};
        
                \draw (R) -- (M) node[mid_auto] {2};
                \draw (RU) -- (R) node[mid_auto] {6};
            \end{scope}
            \begin{scope} [rotate = 120, xscale = 0.6, yscale = 0.6]
                \node[wht] (R) at (1,0) {};
                \tikzset{my642={R,-60}}
                \node[blk] (RU) at (1.5,0.87) {};
        
                \draw (R) -- (M) node[mid_auto, swap] {2};
                \draw (RU) -- (R) node[mid_auto] {6};
            \end{scope}
            \begin{scope} [rotate = 240, xscale = 0.6, yscale = 0.6]
                \node[wht] (R) at (1,0) {};
                \tikzset{my642={R,-60}}
                \node[blk] (RU) at (1.5,0.87) {};
        
                \draw (R) -- (M) node[mid_auto, swap] {2};
                \draw (RU) -- (R) node[mid_auto] {6};
            \end{scope}
        \end{tikzpicture}
        \hspace{0.1cm}
        % 3rot2
        \begin{tikzpicture}
            \node[blk] (M) at (0,0) {};
            \begin{scope} [xscale = 0.6, yscale = -0.6]
                \node[wht] (R) at (1,0) {};
                \tikzset{my642={R,-60}}
                \node[blk] (RU) at (1.5,0.87) {};
        
                \draw (R) -- (M) node[mid_auto] {2};
                \draw (RU) -- (R) node[mid_auto] {6};
            \end{scope}
            \begin{scope} [rotate = 120, xscale = 0.6, yscale = -0.6]
                \node[wht] (R) at (1,0) {};
                \tikzset{my642={R,-60}}
                \node[blk] (RU) at (1.5,0.87) {};
        
                \draw (R) -- (M) node[mid_auto, swap] {2};
                \draw (RU) -- (R) node[mid_auto] {6};
            \end{scope}
            \begin{scope} [rotate = 240, xscale = 0.6, yscale = -0.6]
                \node[wht] (R) at (1,0) {};
                \tikzset{my642={R,-60}}
                \node[blk] (RU) at (1.5,0.87) {};
        
                \draw (R) -- (M) node[mid_auto, swap] {2};
                \draw (RU) -- (R) node[mid_auto] {6};
            \end{scope}
        \end{tikzpicture}
        \caption{2 trees in $\Tree(\Xi, 3)$}
    \end{subfigure}
    \vspace{0.2cm}
    \begin{subfigure}{\textwidth}
        \centering
        % quasi 3rot1
        \begin{tikzpicture}
            \node[blk] (M) at (0,0) {};
            \begin{scope} [xscale = 0.6, yscale = 0.6]
                \node[wht] (R) at (1,0) {};
                \tikzset{my642={R,-60}}
                \node[blk] (RU) at (1.5,0.87) {};

                \draw (R) -- (M) node[mid_auto] {2};
                \draw (RU) -- (R) node[mid_auto] {6};
            \end{scope}
            \begin{scope} [rotate = 120, xscale = 0.6, yscale = -0.6]
                \node[wht] (R) at (1,0) {};
                \tikzset{my642={R,-60}}
                \node[blk] (RU) at (1.5,0.87) {};

                \draw (R) -- (M) node[mid_auto, swap] {2};
                \draw (RU) -- (R) node[mid_auto] {6};
            \end{scope}
            \begin{scope} [rotate = 240, xscale = 0.6, yscale = 0.6]
                \node[wht] (R) at (1,0) {};
                \tikzset{my642={R,-60}}
                \node[blk] (RU) at (1.5,0.87) {};

                \draw (R) -- (M) node[mid_auto, swap] {2};
                \draw (RU) -- (R) node[mid_auto] {6};
            \end{scope}
        \end{tikzpicture}
        \hspace{2cm}
        % quasi 3rot2
        \begin{tikzpicture}
            \node[blk] (M) at (0,0) {};
            \begin{scope} [xscale = 0.6, yscale = -0.6]
                \node[wht] (R) at (1,0) {};
                \tikzset{my642={R,-60}}
                \node[blk] (RU) at (1.5,0.87) {};

                \draw (R) -- (M) node[mid_auto] {2};
                \draw (RU) -- (R) node[mid_auto] {6};
            \end{scope}
            \begin{scope} [rotate = 120, xscale = 0.6, yscale = -0.6]
                \node[wht] (R) at (1,0) {};
                \tikzset{my642={R,-60}}
                \node[blk] (RU) at (1.5,0.87) {};

                \draw (R) -- (M) node[mid_auto, swap] {2};
                \draw (RU) -- (R) node[mid_auto] {6};
            \end{scope}
            \begin{scope} [rotate = 240, xscale = 0.6, yscale = 0.6]
                \node[wht] (R) at (1,0) {};
                \tikzset{my642={R,-60}}
                \node[blk] (RU) at (1.5,0.87) {};

                \draw (R) -- (M) node[mid_auto, swap] {2};
                \draw (RU) -- (R) node[mid_auto] {6};
            \end{scope}
        \end{tikzpicture}
        \caption{2 trees with pseudo-3-order symmetry}
    \end{subfigure}

    \vspace{0.2cm}
    \begin{subfigure}{\textwidth}
        \centering
        % spur1
        \begin{tikzpicture}
            \begin{scope} [xscale = 0.6, yscale = 0.5]
                \node[wht] (L) at (-1,0) {};
                \tikzset{my642={L,90}}
                \tikzset{my642={L,180}}
                \tikzset{my642={L,270}}
                \node[blk] (M) at (0,0) {};
                \node[wht] (R) at (1,0) {};
                \tikzset{my642={R,-60}}
                \node[blk] (RU) at (1.5,0.87) {};

                \draw (L) -- (M) node[mid_auto, swap] {4};
                \draw (M) -- (R) node[mid_auto] {2};
                \draw (R) -- (RU) node[mid_auto] {6};
            \end{scope}
        \end{tikzpicture}
        \hspace{2cm}
        % spur2
        \begin{tikzpicture}
            \begin{scope} [xscale = 0.6, yscale = 0.5]
                \node[wht] (L) at (-1,0) {};
                \tikzset{my642={L,90}}
                \tikzset{my642={L,180}}
                \tikzset{my642={L,270}}
                \node[blk] (M) at (0,0) {};
                \node[wht] (R) at (1,0) {};
                \tikzset{my642={R,60}}
                \node[blk] (RD) at (1.5,-0.87) {};

                \draw (L) -- (M) node[mid_auto, swap] {4};
                \draw (M) -- (R) node[mid_auto] {2};
                \draw (R) -- (RD) node[mid_auto, swap] {6};
            \end{scope}
        \end{tikzpicture}
        \caption{2 spur-shaped trees}
    \end{subfigure}

    \vspace{0.2cm}
    \begin{subfigure}[t]{\textwidth}
        \centering
        % crab1
        \begin{tikzpicture}
            \node[wht] (M) at (0,0) {};
            \begin{scope} [xscale = 0.6, yscale = 0.5]
                \tikzset{my642={M,90}}
            \end{scope}

            \begin{scope} [xscale = 0.6, yscale = 0.5]
                \node[blk] (R) at (1,0) {};
                \node[wht] (RR) at (2,0) {};
                \tikzset{my642={RR,-60}}
                \node[blk] (RRU) at (2.5,0.87) {};

                \draw (R) -- (M) node[mid_auto] {4};
                \draw (RR) -- (R) node[mid_auto] {2};
                \draw (RRU) -- (RR) node[mid_auto] {6};
            \end{scope}
            \begin{scope} [xscale = -0.6, yscale = 0.5]
                \node[blk] (R) at (1,0) {};
                \node[wht] (RR) at (2,0) {};
                \tikzset{my642={RR,-60}}
                \node[blk] (RRU) at (2.5,0.87) {};

                \draw (R) -- (M) node[mid_auto] {4};
                \draw (RR) -- (R) node[mid_auto] {2};
                \draw (RRU) -- (RR) node[mid_auto] {6};
            \end{scope}
            
        \end{tikzpicture}
        \hspace{2cm}
        % crab2
        \begin{tikzpicture}
            \node[wht] (M) at (0,0) {};
            \begin{scope} [xscale = 0.6, yscale = 0.5]
                \tikzset{my642={M,90}}
            \end{scope}

            \begin{scope} [xscale = 0.6, yscale = -0.5]
                \node[blk] (R) at (1,0) {};
                \node[wht] (RR) at (2,0) {};
                \tikzset{my642={RR,-60}}
                \node[blk] (RRU) at (2.5,0.87) {};

                \draw (R) -- (M) node[mid_auto] {4};
                \draw (RR) -- (R) node[mid_auto] {2};
                \draw (RRU) -- (RR) node[mid_auto] {6};
            \end{scope}
            \begin{scope} [xscale = -0.6, yscale = 0.5]
                \node[blk] (R) at (1,0) {};
                \node[wht] (RR) at (2,0) {};
                \tikzset{my642={RR,-60}}
                \node[blk] (RRU) at (2.5,0.87) {};

                \draw (R) -- (M) node[mid_auto] {4};
                \draw (RR) -- (R) node[mid_auto] {2};
                \draw (RRU) -- (RR) node[mid_auto] {6};
            \end{scope}
            
        \end{tikzpicture}
        \vspace{0.2cm}
        \centering
        % crab3
        \begin{tikzpicture}
            \node[wht] (M) at (0,0) {};
            \begin{scope} [xscale = 0.6, yscale = 0.5]
                \tikzset{my642={M,90}}
            \end{scope}

            \begin{scope} [xscale = 0.6, yscale = 0.5]
                \node[blk] (R) at (1,0) {};
                \node[wht] (RR) at (2,0) {};
                \tikzset{my642={RR,-60}}
                \node[blk] (RRU) at (2.5,0.87) {};

                \draw (R) -- (M) node[mid_auto] {4};
                \draw (RR) -- (R) node[mid_auto] {2};
                \draw (RRU) -- (RR) node[mid_auto] {6};
            \end{scope}
            \begin{scope} [xscale = -0.6, yscale = -0.5]
                \node[blk] (R) at (1,0) {};
                \node[wht] (RR) at (2,0) {};
                \tikzset{my642={RR,-60}}
                \node[blk] (RRU) at (2.5,0.87) {};

                \draw (R) -- (M) node[mid_auto] {4};
                \draw (RR) -- (R) node[mid_auto] {2};
                \draw (RRU) -- (RR) node[mid_auto] {6};
            \end{scope}
            
        \end{tikzpicture}
        \hspace{2cm}
        % crab4
        \begin{tikzpicture}
            \node[wht] (M) at (0,0) {};
            \begin{scope} [xscale = 0.6, yscale = 0.5]
                \tikzset{my642={M,90}}
            \end{scope}

            \begin{scope} [xscale = 0.6, yscale = -0.5]
                \node[blk] (R) at (1,0) {};
                \node[wht] (RR) at (2,0) {};
                \tikzset{my642={RR,-60}}
                \node[blk] (RRU) at (2.5,0.87) {};

                \draw (R) -- (M) node[mid_auto] {4};
                \draw (RR) -- (R) node[mid_auto] {2};
                \draw (RRU) -- (RR) node[mid_auto] {6};
            \end{scope}
            \begin{scope} [xscale = -0.6, yscale = -0.5]
                \node[blk] (R) at (1,0) {};
                \node[wht] (RR) at (2,0) {};
                \tikzset{my642={RR,-60}}
                \node[blk] (RRU) at (2.5,0.87) {};

                \draw (R) -- (M) node[mid_auto] {4};
                \draw (RR) -- (R) node[mid_auto] {2};
                \draw (RRU) -- (RR) node[mid_auto] {6};
            \end{scope}
            
        \end{tikzpicture}
        \caption{4 crab-shaped trees}
    \end{subfigure}
    \caption{11 trees of $p=10$ , $q=6$}
    \label{fig:tree_p10q6}
\end{figure}

\begin{example}
    We set $p = 10$ and $q = 6$, i.e. $\Xi = (6^{10},10^6)$, then $g_0 = 2, g_1 = 3$ and $g_2 = 5$. By Proposition \ref{prop:G(1)_when_g_ne_1}, we get :
    \begin{equation*}
        \begin{gathered}
            G(1) = (-1) \frac{1}{2!}[\frac{(\frac{1\times(10+6)}{2}-1)!}{(\frac{1\times10}{g_0})!(\frac{1\times6}{g_0})!}]^{2} + (10+6-1)^{-1} \frac{1}{n_j!} \frac{(\frac{2\times(10+6)}{2}-1)!}{(\frac{2\times10}{g_0})!(\frac{2\times6}{g_0})!} = \\
            - \frac{1}{2!} (\frac{7!}{5!3!})^{2} + \frac{1}{15} \frac{15!}{10!6!} = \frac{133}{15} .
        \end{gathered}
    \end{equation*}
    By Proposition \ref{prop:G(d)_when_g_ne_1}, we get :
    \begin{equation*}
        G(3) = (10+6-1)^{-1} \binom{\frac{2}{3}\frac{10+6}{2}-\frac{1}{3}}{\frac{2}{3}\frac{6}{2}} = \frac{1}{15} \binom{5}{2} = \frac{2}{3} ,
    \end{equation*}
    \begin{equation*}
        G(5) = (10+6-1)^{-1} \binom{\frac{2}{5}\frac{10+6}{2}-\frac{1}{5}}{\frac{2}{5}\frac{10}{2}} = \frac{1}{15} \binom{3}{2} = \frac{1}{5} ,
    \end{equation*}
    Then the total number of $Tree(\Xi)$ is 
    \begin{equation*}
        \#\Tree(\Xi) = G(1) + \varphi(3)G(3) + \varphi(5)G(5) = \frac{133}{15} + 2\times\frac{2}{3} + 4\times\frac{1}{5} = 11 ,
    \end{equation*}
    where the number of $Tree(\Xi, 1)$, $Tree(\Xi, 3)$ and $Tree(\Xi, 5)$ is
    \begin{equation*}
        \#Tree(\Xi, 1) = \mu(\frac{1}{1}) G(1) + \mu(\frac{3}{1}) G(3) + \mu(\frac{5}{1}) G(5) = \frac{133}{15} - \frac{2}{3} - \frac{1}{5} = 8 ,
    \end{equation*}
    \begin{equation*}
        \#Tree(\Xi, 3) = 3 \mu(\frac{3}{3}) G(3) = 2 ,
    \end{equation*}
    \begin{equation*}
        \#Tree(\Xi, 5) = 5 \mu(\frac{5}{5}) G(5) = 1 .
    \end{equation*}
    All the trees are shown in Figrue \ref{fig:tree_p10q6}.
\end{example}

\bigskip
\bibliographystyle{plain}
\bibliography{HCMU_v2}

\end{document}